\newtheorem{theorem}{Theorem}
\newtheorem{lemma}{Lemma}
\newtheorem{corollary}{Corollary}
\newtheorem{theo}{Theorem}[section]
\newtheorem{theor}[theo]{Theorem}
\newtheorem{lem}[theo]{Lemma}
\newtheorem{defin}{Definition}
\theoremstyle{definition}
\newtheorem{remark}[theo]{Remark}
\numberwithin{equation}{section}
\newcommand{\n}[1]{#1}
\def\R{\mathbb{R}}
\def\C{\mathbb{C}}
\def\D{\mathbb{D}}
\def\Z{\mathbb{Z}}
\def\cl{\mbox{\rm cl}}
\def\E{\mathbf{E}}
\def\V{\mathbf{V}}
\def\P{\mathbf{P}}
\def\K{\mathcal{K }}
\title[Large Powers, Khinchin families, Lagrangian distributions]{Large Powers asymptotics, Khinchin families and Lagrangian distributions}
\author[J.\,L. Fern\'{a}ndez]{Jos\'{e} L. Fern\'{a}ndez}
\address[Jos\'{e} L. Fern\'{a}ndez]{Departamento de Matem\'{a}ticas, Universidad Aut\'{o}noma de Madrid, Spain.}
\email{joseluis.fernandez@uam.es, jlfernandez@akusmatika.org}
\thanks{Research of J. L. Fern\'{a}ndez is supported by Fundaci\'on Akusmatika}
\author[V. J. Maci\'{a}]{V\'{\i}ctor J. Maci\'{a}}
\address[V\'{\i}ctor J. Maci\'{a}]{Departamento de Matem\'{a}ticas, Universidad Aut\'{o}noma de Madrid, Spain.}
\email{victor.macia@uam.es}
\thanks{Research of V. Maci\'{a} was partially funded by grant MTM2017-85934-C3-2-P2 of Ministerio de Econom\'ia y Competitividad of Spain and European Research Council Advanced Grant 834728.
}
\subjclass[2010]{{ 30B10, 60F05, 05A16, 60F99}}
\keywords{power series distributions, Khinchin families, Hayman admissible functions, asymptotic formulae, large powers, Lagrange inversion, analytic combinatorics, local central limit theorem}
\begin{document}

\begin{abstract}This paper delves on the versatility of the theory of Khinchin families for asymptotic estimation. We show that in combination with Local Central Limit theorems for lattice variables, Khinchin families furnish  a convenient and unified framework to deal with asymptotic results of the coefficients of large powers of power series.

We revisit in the present paper this classical theme from that point of view, obtaining clean new proofs and a number of new results.

Asymptotic results for the coefficients of solutions of Lagrange's equation fall naturally into this combined framework. We provide a direct proof of an extension of the  Otter and Meir-Moon asymptotic formula as well as asymptotic results for families of Lagrangian probability distributions.

\end{abstract}

\maketitle

	\parskip=1.5mm

\setcounter{tocdepth}{1}

\footnotesize
\normalsize

\section{Introduction}

We  present  a  unified approach to deal with \textit{the asymptotic behaviour of the coefficients of large powers of power series with nonnegative coefficients.}

The  framework of this approach arises from the use  of Local Central Limit Theorems  for individual lattice random variables (Theorem \ref{teor:integral de diferencia caracteristicas})
and also for continuous families of lattice random variables (Theorem
\ref{teor:integral de diferencia caracteristicas familias continuas}) as a tool within the theory of Khinchin families of random variables associated with power series; in particular and foremost by coupling Local Limit  Theorems with  Hayman's asymptotic formula (Theorem \ref{teor:local central limit})
and Hayman's Central Limit Theorem (Theorem \ref{teor:hayman asymptotic formula}).

If $f(z)=\sum_{k=0}^{\infty} a_k z^k$ is a power series with nonnegative coefficients and positive radius of convergence $R$,  its \textit{associated Khinchin family}  $(X_t)_{t \in (0,R)}$ is the family of probability power series distributions indexed by $t \in (0,R)$ and given by
$$\P(X_t=k)=\frac{a_k t^k}{f(t)}\, , \quad \mbox{for $t\in (0,R)$ and integer $k \ge 0$}\, .$$
Thus each $X_t$ of the family is a random variable which takes values in the nonnegative integers following  a \textit{power series distribution}; $\psi_t(z)=f(tz)/f(t)$ is the probability generating function of $X_t$.

Khinchin families are thus families of power series distributions associated with a single power series. The theory of Khinchin  families starts with Hayman in \cite{Hayman}, where the Central Limit Theorem for Hayman's power series appears and also with B{\'a}ez-Duarte in \cite{BaezDuarte}, where strongly Gaussian power series are introduced. The terminology of Khinchin families originates in Rosenbloom \cite{Rosenbloom}, where, for instance, the Wiman-Valiron theorem on entire functions is proved by applying Chebyshev's inequality to the Khinchin family of an entire function.
The theory of Khinchin families has been  developed  at length in \cite{K_uno} and \cite{K_dos}.

The most basic Khinchin families are associated with the Taylor series of $f(z)=e^z$, $f(z)=1/(1-z)$ and $f(z)=(1+z)$. For the exponential function $f(z)=e^z$, the variable $X_t$ of its Khinchin family is a Poisson variable with parameter $t>0$. For $f(z)=1/(1-z)$, the variable $X_t$, for $t \in (0,1)$,  is geometric with failure probability $t$, while for $f(z)=(1+z)$ the variable $X_t$, for $t \in (0,1)$, is Bernoulli with parameter $t/(1+t)$.

Some of the most interesting Khinchin families are associated with generating functions, ordinary, like the generating function of partitions $P(z)=\prod_{j=1}^\infty 1/(1-z^j)$ with radius of convergence $R=1$ or, exponential, like  the exponential generating function of labeled  trees $T(z)=\sum_{n=0}^\infty n^{n{-}2} z^n/n!$ with radius of convergence $R=1/e$, or the exponential generating function of sets of sets (or partitions of sets) $B(z)=e^{e^{z{-}1}}$, with $R=\infty$.

\medskip

Large powers arise while considering sums of independent copies of a random variables: if $\psi(z)$ is the probability generating function of a random variable $Y$ taking values in $\{0,1, \ldots\}$ and $Y_1, \ldots, Y_n$ are independent copies of $Y$ then
$$\P\Big(\sum_{j=1}^n Y_j=k\Big)=\textsc{coeff}_{[k]}(\psi(z)^n)\,,$$
and if $(X_t)_{t \in (0,R)}$  is the Khinchin family associated with some power series $f$, and $X_t^{(1)}, \ldots, X_t^{(n)}$ are $n$ independent copies of $X_t$ then
$$\begin{aligned}
\P\Big(\sum_{j=1}^n X_t^{(j)}=k\Big)&=\textsc{coeff}_{[k]}\Bigg(\bigg(\frac{f(tz)}{f(t)}\bigg)^n\Bigg)\\&=\textsc{coeff}_{[k]}(f^n(z))\, \frac{t^k}{f^{n}(t)}\, , \quad \mbox{for any $k \ge 0$ and $t \in (0,R)$}\, .\end{aligned}$$
Large powers  also appear in Combinatorics. For instance, if $\psi(z)$ is the generating function of an unlabelled combinatorial class $\mathcal{C}$, then  $\textsc{coeff}_{[k]}(\psi(z))$ is the number of the objects in the class with weight $k$, while $\textsc{coeff}_{[k]}(\psi(z)^n)$ counts the number of lists of length $n$ of objects of $\mathcal{C}$ with total weight $k$. For general background on (Analytic) Combinatorics we refer to the comprehensive treatise \cite{Flajolet}.

\medskip

Along this paper, we reserve $k$ to signify \textit{index of the coefficient} and $n$ to denote  \textit{the power to which a power series $f$ is raised.} The power $n$ tends to infinity. Different results appear depending on how $k$ behaves with $n$, or actually, on how $k/n$ behaves as $n \to \infty$. As we shall see the Local Central Limit theorem for continuous families is used to handle the large exponent $n$ while the Khinchin families, particularly, of Gaussian (or strongly Gaussian) power series are used to handle the coefficient index $k$. In \cite{K_uno}, \cite{K_dos}, and,  of course,  in \cite{Hayman} and also in \cite{BaezDuarte}, one can see Khinchin families in action  efficiently estimating  coefficients of power series, with no large  powers involved.

\medskip

The usual tools to obtain these  asymptotic formulas for large powers: the saddle-point approximation,  steepest descent or Laplace method, play no role in the approach followed in this paper, although it must be said that these tools  are somehow built into the Local Central Limit Theorems and into the Hayman class.

Gardy, in \cite{Gardy}, gives an excellent  presentation of large powers asymptotics for $k/n\asymp 1$ and $k/n \to 0$ as  $n \to \infty$, from the point of view of saddle-point approximation. See also, De Angelis  \cite{Angelis}, for a quite direct approach.  See also Daniels \cite{Daniels}, particularly \cite[Section 8]{Daniels}, and Good \cite{Good}, particularly \cite[Section 6]{Good}, for saddle point approximations pertaining to probability generating functions.

\medskip

For general information on power series distributions we refer to Forbes in \cite[Chapter 37]{Forbes} 
and also to Patil in \cite{Patil}.

\medskip

\noindent \textbf{Plan of the paper.} The ingredients of the  approach  of this paper are reviewed in Sections 2 and 3.  The basic theory of Khinchin families in Section \ref{section:Khinchin families}, and the Local Central Limit Theorem for continuous families of lattice variables in
Section \ref{section:local central limit continuous families}.

\medskip

As discussed in Section \ref{section:large powers}, the combination of these two tools give rise to  Hayman's coefficient  formulas. These formulas are then used in Sections
 \ref{section:k comp n}, \ref{section:k=o(n)},  \ref{section:n=o(k)} and \ref{section:coef h psi^n} to obtain large powers (as $n\to \infty$) asymptotic results, which  depend upon the behaviour of the coefficient index $k$ relative to the large power $n$.

\medskip

In Section \ref{section:meir-moon} we cast in the present framework of Local Central Limits and Khinchin families the asymptotic formula due to Otter, \cite{Otter} and Meir-Moon, \cite{MeirMoonOld} for  \textit{the coefficients of the solutions of Lagrange equations}, when  the data  power series has nonnegative coefficients.  The proof presented in Section \ref{section:meir-moon} of the Otter-Meir-Moon Theorem includes a  limit case due to Janson and  proved in \cite[Appendix]{Janson} by   quite a different method; see also \cite[Chapter 3]{Drmota} and also \cite[Section 2.3.1]{Minami}.

Section \ref{section:prob gen functions} discusses applications of large powers asymptotics to probability generating functions and to \textit{Lagrangian power series distributions} and \textit{Galton-Watson or cascade processes}. For background on Lagrangian distributions we refer to \cite{Sibuya} of Sibuya, Miyawaki  and Sumita, for a neat presentation of the basic theory of these  probability distributions and also to \cite{Consul} of Consul and Famoye, for a comprehensive treatment. We refer also to \cite{Minami} for some recent results.

Finally,  Appendix \ref{section:uniformly Gaussian} deals with the so-called uniformly Gaussian and uniformly Hayman Khinchin families which are needed exclusively in Section  \ref{section:n=o(k)}.
\medskip

\noindent \textbf{Notations. }

Expectation and variance of a random variable $X$ are generically denoted $\E(X)$ and $\V(X)$. Probability of an event $A$ is generically denoted by $ \P(A)$.

For random variables $X,Y$ the notation $X\overset{d}{=} Y$ signifies that $X$ and $Y$ have the same distribution: $\P(X\in B)=\P(Y\in B)$ for any Borel set $B\subset \R$.

For a sequence of random variables $(X_n)_{n \ge 1}$ and a random variable $Y$ we write $$X_n\stackrel{d}{\underset{n \to \infty}{\longrightarrow}} Y$$ to signify convergence in distribution.

A family $(Z_s)_{s\in I}$ indexed in a real interval $I$ is continuous in distribution if for any sequence $(s_n)_{n \ge 1}$ of points in $I$ which converges to a point $s^\star\in I$ it holds that $Z_{s_n}$ converges in distribution to $Z_{s^\star}$.

\smallskip

The abbreviations \textit{egf} and \textit{ogf} mean, respectively, exponential generating function of a labeled combinatorial class and ordinary generating function of a combinatorial class.
\smallskip

The unit disk in the complex plane $\C$ is denoted by $\D$. The open disk of center $a\in \C$ and radius $R$ is denoted  $\D(a,R)$, while its closure is denoted $\cl(\D(a,R))$.

For positive sequences $(a_n)_{n\ge 1}, (b_n)_{n\ge 1}$,  the notation  $a_n \sim b_n$,  as $n\to \infty$, means that $\lim_{n \to \infty} a_n/b_n=~1$.

If $\psi$ and $f$ are probability generating functions, with $\mathcal{L}(\psi,f)$ we denote the Lagrangian probability distribution  with generators $\psi$ and $f$, see Section \ref{section:lagrangian distributions}.

\section{Khinchin families}
\label{section:Khinchin families}

In this section we describe  the specific aspects of the theory of Khinchin families to be used in the present paper. For proofs, examples and a variety of applications we  refer to \cite{K_uno}, and also to \cite{K_dos}.

We denote by $\K$ the class of nonconstant power series
$$f(z)=\sum_{n=0}^\infty a_n z^n$$  with positive radius of convergence,  which have nonnegative Taylor coefficients and such that  $a_0>0$. Since $f \in \K$ is nonconstant, at least one coefficient other than $a_0$ is positive.

The \textit{Khinchin  family} of  such a  power series $f \in \K$ with radius of convergence $R>0$ is the family of random variables $(X_t)_{t \in [0,R)}$ with values in $\{0, 1, \ldots\}$ and with mass functions given by
$$
\P(X_t=n)=\frac{a_n t^n}{f(t)}\, , \quad \mbox{for each $n \ge 0$ and $t \in (0,R)$}\, .$$ For $t=0$, we define $X_0\equiv 0$. Notice that $f(t)>0$ for each $t \in[0,R)$.

Any   Khinchin family is continuous in distribution in the interval $[0,R)$. No hypothesis upon joint distribution of the variables $X_t$ is considered. Each $(X_t)_{t \in [0,R)}$  is a family of random variables and not a stochastic process.


\subsubsection{Shifted Khinchin families}

Sometimes it is convenient to consider power series $g(z)=\sum_{n=0}^\infty b_n z^n$ with radius of convergence $R>0$ and nonnegative coefficients with at least two positive coefficients, but which may have $g(0)=0$.  We say then that $g$ is in the  shifted class $\K^s$. Thus $g \in \K^s$ if there exists an integer $l \geq 0$ such that
\begin{align*}
	g(z)/z^l \in \K\,,
\end{align*}
i.e., if  there exists a power series  $f(z) \in \K$ with radius of convergence $R>0$ such that $g(z) = z^l f(z)$, for $z \in \D(0,R)$.

To $g\in \K^s$ we associate a Khinchin family $(Y_t)_{t\
	\in (0,R)}$ as above:
$$\P(Y_t=n)=b_n t^n/g(t)\, , \quad \mbox{for $n \ge 0$ and $t\in (0,R)$}\,.$$

If $(Z_t)_{t \in [0,R)}$ is the Khinchin family of $f \in \K$, then
\begin{align*}
	Y_t \stackrel{d}{=} Z_t+l\,,
\end{align*}
\medskip
including,  $Y_0 \equiv l$.

\subsection{Basic properties}

Along this section we let $f$ be a power series in $\K$ with radius of convergence $R>0$ and Khinchin family $(X_t)_{t \in [0,R)}$.

\subsubsection{Mean and variance functions}
For the mean and variance of $X_t$ we reserve the notation
$m_f(t)=\E(X_t)$ and $\sigma_f^2(t)=\V(X_t)$, for $t \in [0,R)$. In terms of the power series $f$, the  mean and the variance of $X_t$ may be  written as
$$m_f(t)=\frac{t f^\prime(t)}{f(t)}, \qquad \sigma_f^2(t)=t m^\prime(t)\, , \quad \mbox{for $t \in [0,R)$}\,.$$

For each $t \in (0,R)$, the variable $X_t$ is not a constant, and so $\sigma_f^2(t)>0$. Consequently,  $m_f(t)$ is strictly increasing in $[0,R)$, though, in general, $\sigma_f(t)$ is not increasing. We  denote \begin{equation}\label{eq:def Mf} M_f=\lim_{t \uparrow R} m_f(t)\,.\end{equation}

\

For $g \in \K^s$, with Khinchin family $(Y_t)$, we also write $m_g(t)=\E(Y_t)$ and $\sigma_g^2(t)=\V(Y_t)$. If $g(z)=z^l h(z)$, with $l\ge 1$ and $h\in \K$ we have that
\begin{equation*}
	m_{g}(t) = l+m_{h}(t)\quad
	\mbox{and} \quad
	\sigma_{g}^2(t) = \sigma^2_{h}(t).
\end{equation*}
The function $m_{g}$ is an increasing diffeomorphism from $[0,R)$ to $[l,l+M_{h})$.

\medskip

For finite radius of convergence $R >0$ and for any integer $k \ge 1$, the function $f(t)$ and its derivatives $f^{(k)}(t)$ are all increasing functions on the interval $[0,R)$. For $k\ge 1$, we denote with $f^{(k)}(R)$ the limit
$$f^{(k)}(R)\triangleq \lim_{t\uparrow R} f^{(k)}(t)\,,$$
including
$$f(R)\triangleq \lim_{t\uparrow R} f(t)\,;$$
these limits exist, although they could be $+\infty$.

\subsubsection{Range of the mean}\label{section:range of mean}
Since the mean function $m_f(t)$ is increasing, its range is given by $[0,M_f)$.

\noindent $\bullet$  The case  where $M_f=\infty$ is particularly relevant and quite general. If $M_f=\infty$, then $m_f(t)$ is a diffeomorphism  from $[0,R)$ onto $[0, +\infty)$ and, in particular, for each $n \ge 1$, there exists a unique $t_n\in (0,R)$ such that $m_f(t_n)=n$. These $t_n$ play an important role in Hayman's identity, Section \ref{section:haymans identity},  and in Hayman's asymptotic formula, Section \ref{section:haymans asymptotic formula}.

\noindent $\bullet$ The following Lemma \ref{lemma:char of Mf finite} describes the quite specific cases where $M_f<+\infty$; it is Lemma 2.2 of \cite{K_uno}.

\begin{lemma}\label{lemma:char of Mf finite}
	For $f(z)=\sum_{n=0}^\infty a_n z^n$ in $\mathcal{K}$ with radius of convergence $R>0$, we have $M_f<\infty$ if and only if \big($R<\infty$ and $\sum_{n=0}^\infty n a_n R^n <\infty$\big) or \big($R=\infty$ and $f$ is a polynomial\big).\end{lemma}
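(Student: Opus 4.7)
The plan is to split the statement into two cases according as $R<\infty$ or $R=\infty$, and in each case to exploit the identity $m(t)=tf'(t)/f(t)$ together with monotone convergence applied to the nonnegative series $f(t)=\sum a_n t^n$ and $tf'(t)=\sum na_n t^n$, combined with a simple truncation argument when needed.

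The ``if'' direction is quick. When $R<\infty$ and $\sum_n na_n R^n<\infty$, the inequality $a_n R^n\le na_n R^n$ for $n\ge 1$ gives $f(R)=a_0+\sum_{n\ge 1}a_n R^n<\infty$ as well, and monotone convergence yields $M_f=Rf'(R)/f(R)<\infty$ (with $f(R)\ge a_0>0$). When $R=\infty$ and $f$ is a polynomial of degree $d\ge 1$, a direct computation gives $m(t)\to d$ as $t\to\infty$.

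For the ``only if'' direction I argue by contrapositive, so I must show $M_f=\infty$ in the two remaining cases: (a) $R<\infty$ and $\sum_n na_n R^n=\infty$, or (b) $R=\infty$ and $f$ is not a polynomial. In case (a), if $f(R)<\infty$ then $tf'(t)\uparrow Rf'(R)=\infty$ while $f(t)$ stays bounded by $f(R)$, so $m(t)\to\infty$ directly. Otherwise $f(R)=\infty$, and for any fixed $K\ge 1$ I bound
$$m(t)\ge \frac{\sum_{n>K}na_n t^n}{f(t)}\ge K\,\frac{\sum_{n>K}a_n t^n}{f(t)}=K\left(1-\frac{\sum_{n\le K}a_n t^n}{f(t)}\right).$$
The numerator in the correction tends to the finite limit $\sum_{n\le K}a_n R^n$ while $f(t)\to\infty$, so $\liminf_{t\uparrow R}m(t)\ge K$; letting $K\to\infty$ gives $M_f=\infty$. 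Case (b) is almost identical: for each $K$ I pick $n_0>K$ with $a_{n_0}>0$ (which exists because $f$ is not a polynomial), use $f(t)\ge a_{n_0}t^{n_0}$ to dominate the polynomial $\sum_{n\le K}a_n t^n$ in the correction term, and again conclude $\liminf_{t\to\infty}m(t)\ge K$.

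The only step requiring any thought is the truncation inequality in the indeterminate $\infty/\infty$ regime; once that is set up, letting $K$ grow gives the claim. Everything else reduces to monotone convergence for nonnegative series, so I anticipate no real obstacle beyond keeping the case distinction clean.
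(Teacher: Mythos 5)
Your proof is correct. Note that the paper itself gives no argument for this lemma --- it simply cites Lemma 2.2 of \cite{K_uno} --- so there is no in-paper proof to compare against; your write-up supplies a complete, self-contained and elementary justification. Both directions check out: the ``if'' part follows from monotone convergence applied to $f(t)=\sum a_n t^n$ and $tf'(t)=\sum n a_n t^n$ (together with $f(R)\ge a_0>0$ and the degree computation for polynomials), and your contrapositive handles the two remaining cases cleanly. The key truncation bound
$$m(t)\ \ge\ K\Bigl(1-\frac{\sum_{n\le K}a_n t^n}{f(t)}\Bigr)$$
is valid since $\sum_{n>K}na_nt^n\ge K\sum_{n>K}a_nt^n$ and $\sum_{n>K}a_nt^n=f(t)-\sum_{n\le K}a_nt^n$, and in each subcase you correctly verify that the correction term vanishes (via $f(t)\to\infty$ when $f(R)=\infty$, via the separate direct argument when $f(R)<\infty$ but $Rf'(R)=\infty$, and via domination by $a_{n_0}t^{n_0}$ with $n_0>K$ when $R=\infty$ and $f$ is not a polynomial). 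Letting $K\to\infty$ and using that $m$ is increasing completes the argument; I see no gap.
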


In the first case, we have that $M_f=\dfrac{\sum_{n=0}^\infty n a_n R^n}{\sum_{n=0}^\infty a_n R^n}$, while for a polynomial $f\in \mathcal{K}$, the second case,  we have that $M_f=\mbox{deg}(f) $.

\subsubsection{Extension of the Khinchin family if $M_f<\infty$ {\upshape{(}}and $R<\infty${\upshape{)}}}\label{section:extension of family} We assume now that $M_f<\infty$ (and $R<+\infty$).

Thus we have that $\sum_{n=0}^\infty n a_n R^n <\infty$ and also that $f(R)=\sum_{n=0}^\infty a_n R^n <\infty$. The power series $\sum_{n=0}^\infty  a_n z^n$ defines a continuous (actually, $C^1$) function on the whole closed disk $\cl(\D(0,R))$ which extends $f$ from the open disk $\D(0,R)$.  We let $f(R)\triangleq \sum_{n=0}^\infty a_n R^n=\lim_{t\uparrow R} f(t)$ and
$$\begin{aligned}f^\prime(R)&\triangleq\sum_{n=1}^\infty n a_n R^{n{-}1}&=\lim_{t\uparrow R} f^\prime(t),\\ f^{\prime\prime}(R)&\triangleq\sum_{n=2}^\infty n (n{-}1) a_n R^{n{-}2}&=\lim_{t\uparrow R} f^{\prime\prime}(t).\end{aligned}$$
In this case, we may extend the Khinchin family $(X_t)_{t\in [0,R)}$ of $f$ to $t \in [0,R]$ by defining the variable
$X_R$ by
$$\P(X_R=n)=\frac{ a_n R^n}{f(R)}\,, \quad \mbox{for each $n \ge 0$}\,.$$
The extended family $(X_t)_{t\in [0,R]}$ becomes continuous in distribution in the closed interval $[0,R]$. Observe that $X_R$ (like any other $X_t$, with $t \in (0,R)$) is nonconstant.

The variable $X_R$ has (finite) mean $\E(X_R)=R f^\prime(R)/f(R)\triangleq m_f(R)$ and variance
$$\V(X_R)=\frac{\sum_{n=0}^\infty n^2 a_n R^n}{f(R)}-\E(X_R)^2\,.$$
The variance $\V(X_R)$ is nonzero since $X_R$ is nonconstant, but it could be infinite. Actually, $\V(X_R)$ is finite if and only if $\sum_{n=0}^\infty n^2 a_n R^n<+\infty$ if and only if $f^{\prime\prime}(R)<+\infty$, and in any case
$$\V(X_R)=\lim_{t \uparrow R} \sigma_f^2(t)\,.$$

If $\V(X_R)$ is finite, we write $ \sigma_f^2(R)\triangleq\V(X_R)$.
\subsubsection{Normalization and characteristic functions}

For each $t \in (0,R)$, we denote by  $\breve{X}_t$ the normalization of $X_t$:
$$\breve{X}_t=\frac{X_t-m_f(t)}{\sigma_f(t)},\, \quad \mbox{for $t \in (0,R)$}\, .$$
The characteristic function of $X_t$ may be written in terms of the power series $f$ as:
$$\E(e^{\imath \theta X_t})=\frac{f(te^{\imath \theta})}{f(t)}\, , \quad \mbox{for $t\in (0,R)$ and $\theta \in \R$}\,, $$
while for its normalized version $\breve{X}_t$ we have
$$\E(e^{\imath \theta \breve{X}_t})=\E(e^{\imath \theta X_t/\sigma_f(t)}) e^{-\imath \theta m_f(t)/\sigma_f(t)}\, , \quad \mbox{for $t\in (0,R)$ and $\theta \in \R$}\,,$$
and so,
$$\big|\E(e^{\imath \theta \breve{X}_t})\big|=\big|\E(e^{\imath \theta X_t/\sigma_f(t)})\big|\, , \quad \mbox{for $t\in (0,R)$ and  $\theta \in \R$}\,.$$

\

If $M_f<\infty$ (and $R<\infty$), we may define $\breve{X}_R=(X_R-m_f(R))/\sigma_f(R)$, with the understanding that $\breve{X}_R\equiv 0$, if $\sigma_f(R)=+\infty$.  Recall that $\sigma_f(R)>0$. The extended family $(\breve{X}_t)_{t \in (0,R]}$ of normalized variables is continuous (in distribution).

\subsubsection{Scaling}\label{section:scale}

For any power series $g(z)=\sum_{n=0}^\infty a_n z^n$ in $\K$ of radius $R>0$, we denote
$$Q_g\triangleq\gcd\{n\ge 1: a_n \neq 0\}=\lim_{N \to \infty} \gcd\{1\le n \le N: a_n \neq0\}\,.$$
If $Q_g>1$, then we can write $g(z)=f(z^{Q_g})$ for a certain companion power series  $f \in \mathcal{K}$ with  has radius of convergence $R^{Q_g}$. Observe that $Q_f=1$.

Let  $(Y_t)_{t \in [0, R)}$ be the Khinchin family of $g$ and $(X_t)_{t \in [0,R^{Q_g})}$ be the Khinchin family of $f$. We have that
\begin{align*}
	Y_t \stackrel{d}{=}Q_g \cdot X_{t^{Q_g}}, \quad \text{ for any } t \in (0,R)).
\end{align*}

The mean and variance functions of $g$ and $f$ are related by
$$\begin{aligned}
	m_g(t)=Q_g \cdot m_f(t^{Q_g})\, , \\
	\sigma^2_g(t)=Q_g^2 \cdot \sigma^2_f(t^{Q_g})\, ,
\end{aligned}\qquad \mbox{for $t\in (0,R)$}\,.
$$

\subsubsection{Fulcrum $F$ of $f$}\label{section:auxiliary F} A power series $f$ in $\K$ does not vanish on the real interval $[0,R)$, and so, it does not vanish in a simply connected region containing that interval.  We may consider $\ln f$, a branch of the logarithm of $f$ which is real on $[0,R)$, and the  function $F$, called the \textit{fulcrum} of $f$, which is  defined and holomorphic in a region containing $(-\infty, \ln R)$ and it is given by
$$F(z)=\ln f(e^z)\, .$$

If $f$ does not vanish anywhere in the disk $\D(0,R)$, then the fulcrum $F(z)$ of $f$ is defined everywhere in the whole half plane $\Re z< \ln R$.

In general, the fulcrum $F$ of $f$ is  defined and holomorphic in the band-like region $\{s+\imath \theta: s<\ln R, |\theta|< \sqrt{2}/\sigma_f(e^s)\}$. See Section 2.1.5 of \cite{K_uno}.

For $t=e^s$, we have $m_f(t)=F^\prime(s)$ and $\sigma_f^2(t)=F^{\prime\prime}(s)$, for $s <\ln R$.

\subsubsection{Products} Let $f$ and $g$ be  power series in $\K$ with  radius of convergence $R$ and $S$ and associated Khinchin families $(X_t)_{t \in (0,R)}$ and $(Y_t)_{t \in (0,S)}$, respectively. The product $h=fg$ is also in $\K$. Let $T=\min\{R,S\}$, and let $(Z_t)_{t \in (0,T)}$ be the Khinchin family associated with $h$.

Then for each $t \in (0,T)$ the law of $Z_t$ is that of \textit{a sum of independent copies} of $X_t$ and $Y_t$:
$$\P(Z_t=k)=\sum_{j=0}^k \P(X_t=j)\P(Y_t=k{-}j)\, , \quad \mbox{for each $k \ge 0$}\,.$$

In particular, if $(X_t)_{t \in (0,R)}$  is the Khinchin family associated with some power series $f$, and if we let $X_t^{(1)}, \ldots, X_t^{(n)}$ denote  $n$ independent copies of $X_t$ then
$$
\P\Big(\sum_{j=1}^n X_t^{(j)}=k\Big)=\textsc{coeff}_{[k]}\big(f^n(z)\big)\, \frac{t^k}{f^{n}(t)}\, , \quad \mbox{for any $k \ge 0$ and $t \in (0,R)$}\, .$$

\subsubsection{Basic families and some examples}\label{section:basic families and some examples} For the exponential $f(z)=e^z$ we have $m_f(t)=t$ and $\sigma_f^2(t)=t$, for each $t>0$, as it should since the variable $X_t$ of the associated Khinchin family follows a Poisson distribution with parameter $t$. For $f(t)=1/(1-t)$ whose Khinchin family comprises geometric variables, one has $m_f(t)=t/(1-t)$ and $\sigma_f^2(t)=t/(1-t)^2$, for $t \in (0,1)$.

For the exponential generating function of partitions of sets $B(z)=e^{e^z{-}1}$, one has $m_B(z)=t e^t$ and $\sigma_B^2(t)=t(1{+}t)e^t$.

For the generating function of partitions of integers $P(z)=\prod_{j=1}^\infty 1/(1-z^j)$, one has that
\begin{equation*}
m_P(t)=\sum_{j=1}^\infty \frac{j t^j}{1-t^j} \quad \mbox{y}\quad \sigma_P^2(t)=\sum_{j=1}^\infty \frac{j^2 t^j}{(1-t^j)^2}\,, \quad \mbox{for any $t\in (0,1)$}\,,\end{equation*}
with the asymptotic approximation
\begin{equation}\label{eq:asymptotics media var partition function}
m_P(e^{-s}) \sim \frac{\zeta(2)}{s^2} \quad \mbox{y}\quad \sigma^2_P(e^{-s})\sim \frac{2 \zeta(2)}{s^3}\,, \quad \mbox{as $s\downarrow 0$}\,.
\end{equation}
See \cite[Section 6.1]{K_uno}.
\subsection{Hayman's identity}\label{section:haymans identity}

Let $f(z)=\sum_{n=0}^\infty a_n z^n$ be a power series in $\K$. Cauchy's formula for the coefficient $a_n$ in terms of the characteristic function of its Khinchin  family $(X_t)_{t \in [0,R)}$ reads
$$a_n=\frac{f(t)}{2\pi t^n }\int_{|\theta|<\pi} \E(e^{\imath \theta X_t})e^{-\imath \theta n} \, d \theta\, , \quad \mbox{for each $t \in (0,R)$ and $n\ge 1$}\, .$$

In terms of the characteristic function of the normalized variable $\breve{X}_t$, it becomes
\begin{equation}\label{eq:expresion integral general de a_n previa a hayman}\begin{aligned}a_n=\frac{f(t)}{2\pi t^n \sigma_f(t)}\int_{|\theta|<\pi\sigma_f(t)} \E(e^{\imath \theta \breve{X}_t})&e^{-\imath \theta (n-m_f(t))/\sigma_f(t)} \, d \theta\, , \\&\quad \mbox{for each $t \in (0,R)$ and $n \ge 1$}\, .\end{aligned}\end{equation}

\noindent $\bullet$ If $M_f=\infty$, we may take for each $n\ge 1$ the (unique) radius $t_n \in (0,R)$  so that $m_f(t_n)=n$, to write
\begin{equation}\label{eq:haymans formula}a_n=\frac{f(t_n)}{2\pi t_n^n \sigma_f(t_n)}\int_{|\theta|<\pi\sigma_f(t_n)} \E(e^{\imath \theta \breve{X}_{t_n}}) \, d \theta\, , \quad \mbox{for each $n \ge 1$}\, ,\end{equation}
which we call \textit{Hayman's identity}.

Although this identity \eqref{eq:haymans formula} is just Cauchy's formula with an appropriate choice of radius, it  encapsulates the saddle point method.

\medskip

As we now check, if the Khinchin family $(X_t)$ may be extended to the closed interval $[0,R]$, then an analogous Hayman's identity is available for $t=R$.

\noindent  $\bullet$ If $M_f<\infty$ and ($R<\infty$) the power series $f$ extends continuously to the closed disk $\cl(\D(0,R))$ and the Khinchin family $(X_t)_{t \in [0,R)}$ extends to the closed interval $[0,R]$. See Section \ref{section:range of mean} and the notations therein.

We may write
$$a_n=\frac{f(R)}{2\pi R^n }\int_{|\theta|<\pi} \E(e^{\imath \theta X_R})e^{-\imath \theta n} \, d \theta\, , \quad \mbox{for $n\ge 1$}\, .$$

If, moreover, $\sigma_f^2(R)<\infty$, i.e., if $\sum_{n=0}^\infty n^2 a_n R^n<\infty$, then $\breve{X}_R$ given by $$\breve{X}_R=\frac{X_R{-}m_f(R)}{\sigma_f(R)}$$ is well defined and we also have that
\begin{equation}\label{eq:expresion integral general de a_n previa a hayman si Mf finito}a_n=\frac{f(R)}{2\pi R^n \sigma_f(R)}\int_{|\theta|<\pi\sigma_f(R)} \E(e^{\imath \theta \breve{X}_R})\, e^{-\imath \theta (n-m_f(R))/\sigma_f(R)} \, d \theta\, , \quad \mbox{for $n \ge 1$}\, .\end{equation}

\subsection{Moments of a Khinchin family near 0}\label{section:basic moments near 0}

Let $f(z)=\sum_{n=0}^\infty a_n z^n$ be a power series in $\K$ with radius of convergence $R>0$. Let $(X_t)_{t \in[0,R)}$ be its Khinchin family and  $m_f(t)$ and $\sigma_f^2(t)$ be, respectively, the mean and variance of $X_t$, for $t \in [0,R)$.

Assume first that $a_1>0$. In this case, we have that
$$m_f(t)=\frac{a_1}{a_0} t+O(t^2)  \quad \text{and} \quad \sigma_f^2(t)=\frac{a_1}{a_0} t+O(t^2) \, , \quad \mbox{as $t \downarrow 0$}\,, $$
and,
besides, that
$$\E(|X_t-m_f(t)|^3)=\frac{a_1}{a_0} t+O(t^2) \quad \text{and} \quad \sqrt{t} \, \E(|\breve{X}_t|^3)=\sqrt{\frac{a_0}{a_1}}  +O(t)\, , \quad \mbox{as $t \downarrow 0$}\, .$$

In general, if for $k \ge 1$, we have $a_k>0$, but $a_j=0$, for $1\le j < k$, then
$$m_f(t)=\frac{k a_k}{a_0} t^k+O(t^{k{+}1})  \quad \text{and} \quad \sigma_f^2(t)=\frac{k^2 a_k}{a_0} t^k+O(t^{k{+}1})\, , \quad \mbox{as $t \downarrow 0$}\,, $$
and,
besides, that
$$\E(|X_t-m_f(t)|^3)=\frac{k^3 a_k}{a_0} t^k+O(t^{k{+}1})\quad \text{and} \quad t^{k/2} \, \E(|\breve{X}_t|^3)=\sqrt{\frac{a_0}{a_k}} +O(t)\, , \quad \mbox{as $t \downarrow 0$}\, .$$

\subsection{Gaussian Khinchin families}\label{section:gaussian Khinchin families}

We introduce next Gaussian and strongly Gaussian power series.
\begin{defin}
	A power series $f $ in $\K$ of radius of convergence $R>0$ and its  Khinchin family $(X_t)_{t \in [0,R)}$ are termed \textit{Gaussian} if
	$\breve{X}_t$ converges in distribution to the standard normal, as $t\uparrow R$, or, equivalently, if
	$$\lim_{t \uparrow R} \E(e^{\imath \theta \breve{X}_t})=e^{-\theta^2/2}\, , \quad \mbox{for each $\theta \in \R$}\,.$$\end{defin}

\medskip

\begin{defin}\label{defin:strongly gaussian} A power series $f \in \K$ and its  Khinchin family $(X_t)_{t \in [0,R)}$ are termed  \textit{strongly Gaussian} if the following two conditions are satisfied
	\begin{equation*}\lim_{t \uparrow R} \sigma_f(t)=+\infty \quad \mbox{and} \quad \lim_{t \uparrow R} \int\limits_{|\theta|<\pi \sigma_f(t)}\Big|\E(e^{\imath \theta \breve{X}_t})-e^{-\theta^2/2}\Big| \, d\theta=0\, .\end{equation*}\end{defin}

This notion of strongly Gaussian power series was introduced by B\'{a}ez-Duarte in \cite{BaezDuarte}.
Strongly Gaussian power series are Gaussian, see Theorem \ref{teor:local central limit}.

\

We refer to \cite{K_uno} and \cite{K_dos} for examples, for criteria to verify whether  a power series is Gaussian or  strongly Gaussian, and for a number of applications (asymptotic formulas of coefficients) of these concepts for set constructions in Analytic Combinatorics, including partitions.

We just mention (see \cite{K_uno}) that
\begin{itemize} \item the exponential $f(z)=e^z$ is strongly Gaussian and thus Gaussian,
	\item polynomials are \textit{not} Gaussian. In fact, if $(X_t)_{t \in [0,\infty)}$ is the Khinchin family of a polynomial of degree $N$, then, as $t \uparrow \infty$,  $X_t$ tends in distribution to the constant $N$ and $\breve{X}_t$ tends in distribution to $0$.
\end{itemize}

\begin{lemma}\label{lema:gaussian Mf inf} If $f$ is a Gaussian power series, then $M_f=+\infty$\end{lemma}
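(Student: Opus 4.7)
The plan is to argue by contradiction. Assume $f$ is Gaussian but $M_f<\infty$. Invoke Lemma \ref{lemma:char of Mf finite} to split into two cases: either (a) $R=\infty$ and $f$ is a polynomial, or (b) $R<\infty$ and $\sum_{n\ge 0} n\,a_n R^n<\infty$. In each case I will show that $\breve{X}_t$ cannot converge in distribution to the standard normal as $t\uparrow R$, contradicting the Gaussian hypothesis.

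Case (a) is already settled by the observations made right before the lemma: for a polynomial of degree $N$ one has $X_t\to N$ and $\breve{X}_t\to 0$ in distribution as $t\to\infty$, so $f$ fails to be Gaussian.

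Case (b) is the substantive one. Since $R<\infty$ and $\sum n\,a_n R^n<\infty$, Section \ref{section:extension of family} applies: the Khinchin family extends continuously in distribution to the closed interval $[0,R]$, with a nonconstant variable $X_R$ satisfying $\mathbf{P}(X_R=n)=a_n R^n/f(R)$, finite mean $m(R)$, and variance $\sigma^2(R)\in(0,+\infty]$. I split according to whether $\sigma(R)$ is finite. If $\sigma(R)<\infty$, then the extended family of normalizations $(\breve{X}_t)_{t\in(0,R]}$ is continuous in distribution at $R$, so $\breve{X}_t$ converges in distribution to $\breve{X}_R=(X_R-m(R))/\sigma(R)$, which takes values in a discrete subset of $\mathbf{R}$ and hence is not standard normal. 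If instead $\sigma(R)=+\infty$, then $X_t\to X_R$ in distribution, so $X_t-m(t)$ is tight (converging in distribution to the proper variable $X_R-m(R)$), while $\sigma(t)\uparrow+\infty$; therefore for every $\varepsilon>0$
\begin{equation*}
\mathbf{P}\bigl(|\breve{X}_t|>\varepsilon\bigr)=\mathbf{P}\bigl(|X_t-m(t)|>\varepsilon\,\sigma(t)\bigr)\xrightarrow[t\uparrow R]{}0,
\end{equation*}
so $\breve{X}_t\to 0$ in probability, hence in distribution, again not standard normal.

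Putting the three subcases together contradicts the Gaussian hypothesis, proving $M_f=+\infty$. The only place that requires a bit of care is the second subcase of (b) with $\sigma(R)=+\infty$: one has to notice that tightness of $X_t-m(t)$ (coming from continuity in distribution of the extended family) together with $\sigma(t)\to\infty$ forces the normalization to collapse to zero. Apart from that, the argument is a straightforward case analysis based on Lemma \ref{lemma:char of Mf finite} and the extension procedure of Section \ref{section:extension of family}.
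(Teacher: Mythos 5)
Your proof is correct and follows essentially the same route as the paper: contradiction via Lemma~\ref{lemma:char of Mf finite}, the polynomial case disposed of by the remark preceding the lemma, and in the case $R<\infty$ the extension to $X_R$ with the two subcases $\sigma(R)<\infty$ (limit is the discrete variable $\breve{X}_R$) and $\sigma(R)=\infty$ (limit collapses to the constant $0$). Your tightness justification of the latter subcase is a welcome elaboration of a step the paper states without detail.
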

\begin{proof}Assume that $M_f<+\infty$. We use the characterization of $M_f<+\infty$ given in Lemma \ref{lemma:char of Mf finite}.
	If the radius of convergence $R$ of $f$ is $R=+\infty$, then $f$ is a polynomial, which is not Gaussian as we just have mentioned.
	
	If $R<\infty$, then (see Section \ref{section:range of mean}) we may extend the Khinchin family to include $t=R$ with $X_R$. Thus $X_t$ tends in distribution to $X_R$ as $t \uparrow R$. If $\sigma_f(R)<\infty$, then $\breve{X}_t$ tends in distribution to $\breve{X}_R=(X_R-m_f(R))/\sigma_f(R)$, while if $\sigma_f(R)=\infty$, then $\breve{X}_t$ tends in distribution to the constant $\breve{X}_R\equiv 0$. In both cases, the limit $\breve{X}_R$ is a discrete random variable.
\end{proof}
\subsection{Hayman's Central Limit Theorem}

Strongly Gaussian power series satisfy Theorem \ref{teor:local central limit} below, which is both a local and global Central Limit Theorem. This result, for Hayman's admissible power series, comes from \cite{Hayman}, but see \cite{K_uno} for a proof for the more general case of strongly Gaussian power series.

\begin{theorem}[Hayman's  Central Limit Theorem] \label{teor:local central limit} If $f(z)=\sum_{n=0}^\infty a_n z^n $ in $\K$ is a strongly Gaussian power series with radius of convergence $R>0$, then
	\begin{equation}\label{eq:local central limit}\lim_{t \uparrow R} \sup\limits_{n \in \Z} \Big|\frac{a_n t^n}{f(t)} \sqrt{2\pi}\sigma_f(t)-e^{-(n-m_f(t))^2/(2\sigma_f^2(t))}\Big|=0\,.\end{equation}
	{\upshape{(}}In this statement $a_n=0$, for $n <0$.{\upshape{)}}
	
	Besides,$$
	\lim_{t \uparrow R}\P( \breve{X}_t \le b)=\Phi(b)\,, \quad \mbox{for every $b \in \R$}\, .$$
	And so,   $\breve{X}_t$ converges in distribution towards the standard normal  and $f$ is Gaussian.
\end{theorem}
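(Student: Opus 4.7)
The plan is to deduce the local limit statement \eqref{eq:local central limit} directly from Hayman's integral representation \eqref{eq:expresion integral general de a_n previa a hayman} by comparison with the Fourier transform of the Gaussian, and then to derive the convergence of the distribution function of $\breve{X}_t$ from that local result by a Riemann sum argument combined with Chebyshev's inequality for tail control.

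For the local part, set $\xi_n(t)=(n-m(t))/\sigma(t)$. Rewriting \eqref{eq:expresion integral general de a_n previa a hayman} as
\begin{equation*}
\sqrt{2\pi}\,\sigma(t)\,\frac{a_n t^n}{f(t)}=\frac{1}{\sqrt{2\pi}}\int_{|\theta|<\pi\sigma(t)}\E(e^{\imath\theta\breve{X}_t})\,e^{-\imath\theta\,\xi_n(t)}\,d\theta,
\end{equation*}
and invoking the Fourier inversion identity
\begin{equation*}
e^{-\xi_n(t)^2/2}=\frac{1}{\sqrt{2\pi}}\int_{\R}e^{-\theta^2/2}\,e^{-\imath\theta\,\xi_n(t)}\,d\theta,
\end{equation*}
the triangle inequality yields
\begin{equation*}
\Bigl|\sqrt{2\pi}\,\sigma(t)\tfrac{a_n t^n}{f(t)}-e^{-\xi_n(t)^2/2}\Bigr|\le \frac{1}{\sqrt{2\pi}}\int_{|\theta|<\pi\sigma(t)}\bigl|\E(e^{\imath\theta\breve{X}_t})-e^{-\theta^2/2}\bigr|\,d\theta+\frac{1}{\sqrt{2\pi}}\int_{|\theta|\ge\pi\sigma(t)}e^{-\theta^2/2}\,d\theta.
\end{equation*}
The right-hand side is independent of $n$: the first integral tends to $0$ by the second clause of Definition \ref{defin:strongly gaussian}, and the second, a Gaussian tail beyond $\pm\pi\sigma(t)$, tends to $0$ since $\sigma(t)\to\infty$. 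Taking $\sup_{n\in\Z}$ gives \eqref{eq:local central limit}; the convention $a_n=0$ for $n<0$ is harmless since the left-hand side of \eqref{eq:expresion integral general de a_n previa a hayman} extends trivially by $0$ to every $n\in\Z$.

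For the global statement, fix $a<b$ in $\R$ and write
\begin{equation*}
\P(a<\breve{X}_t\le b)=\sum_{n:\,a<\xi_n(t)\le b}\frac{a_n t^n}{f(t)}.
\end{equation*}
By \eqref{eq:local central limit}, each summand equals $(2\pi)^{-1/2}\sigma(t)^{-1}e^{-\xi_n(t)^2/2}$ up to an additive error $\varepsilon(t)/\sigma(t)$ with $\varepsilon(t)\to 0$ uniformly in $n$. Since the number of indices in the sum is at most $(b-a)\sigma(t)+1$, the cumulative error is $O(\varepsilon(t))\to 0$, while the main term is a Riemann sum of mesh $1/\sigma(t)\to 0$ for $\int_a^b (2\pi)^{-1/2}e^{-u^2/2}\,du=\Phi(b)-\Phi(a)$ and converges by uniform continuity of the Gaussian density. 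Hence $\P(a<\breve{X}_t\le b)\to \Phi(b)-\Phi(a)$. Chebyshev's inequality applied to $\breve{X}_t$, which has mean $0$ and variance $1$, gives $\P(\breve{X}_t\le -M)\le 1/M^2$ uniformly in $t$, so letting first $t\uparrow R$ and then $M\to\infty$ in the decomposition $\P(\breve{X}_t\le b)=\P(-M<\breve{X}_t\le b)+\P(\breve{X}_t\le -M)$ yields $\P(\breve{X}_t\le b)\to \Phi(b)$; thus $\breve{X}_t$ converges in distribution to the standard normal, and $f$ is Gaussian.

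The principal technical point is the uniformity in $n$ of the error term $\varepsilon(t)$ appearing in the Riemann sum step, which is precisely what the uniform-in-$n$ statement \eqref{eq:local central limit} provides, so the argument hinges on first pinning down the local part cleanly. A slicker alternative for the distributional convergence would sidestep Riemann sums: the uniform $1$-Lipschitz bound $|\E(e^{\imath\theta_1\breve{X}_t})-\E(e^{\imath\theta_2\breve{X}_t})|\le |\theta_1-\theta_2|\,\E|\breve{X}_t|\le |\theta_1-\theta_2|$ promotes the $L^1$ convergence on $|\theta|<\pi\sigma(t)$ to pointwise convergence $\E(e^{\imath\theta\breve{X}_t})\to e^{-\theta^2/2}$ for every $\theta\in\R$, after which L\'evy's continuity theorem concludes.
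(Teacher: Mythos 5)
Your argument is correct. Note that the paper itself does not prove Theorem~\ref{teor:local central limit}; it defers to the reference \cite{K_uno} (and, for the Hayman-admissible case, to \cite{Hayman}). Your proof is essentially the standard one given there: the local statement \eqref{eq:local central limit} falls out of the integral representation \eqref{eq:expresion integral general de a_n previa a hayman} together with the Gaussian Fourier inversion identity and the two defining conditions of strong Gaussianity (the error bound being uniform in $n$, including $n<0$ where both sides of the Cauchy identity vanish), and the distributional convergence then follows by summing the local estimates as a Riemann sum with Chebyshev controlling the tails. Your closing remark --- that the $1$-Lipschitz bound on $\theta\mapsto\E(e^{\imath\theta\breve{X}_t})$ upgrades the $L^1$ convergence to pointwise convergence of characteristic functions, after which L\'evy's continuity theorem finishes --- is also sound and is arguably the cleaner route to the second assertion.
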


\smallskip

In other terms, Theorem \ref{teor:local central limit} claims that if for each $t\in[0,R)$ we define the set  $$\mathcal{V}_t=\Big\{\frac{n-m_f(t)}{\sigma_f(t)}: n \in \Z\Big\}\, ,$$
then
$$
\lim_{t \uparrow R} \sup\limits_{x \in \mathcal{V}_t} \Big|\P(\breve{X}_t =x)\, \sigma_f(t)-\frac{1}{\sqrt{2\pi}} e^{-x^2/2}\Big|=0\, .$$

\subsection{Asymptotic formula of coefficients of strongly Gaussian power series}\label{section:haymans asymptotic formula}

The coefficients of strongly Gaussian power series obey a neat asymptotic formula:

\begin{theorem}[Hayman's asymptotic formula]\label{teor:hayman asymptotic formula} If $f(z)=\sum_{n=0}^\infty a_n z^n$ in $\K$ is strongly Gaussian then
	\begin{equation}\label{eq:asymptotic formula coeffs}a_n \sim  \frac{1}{\sqrt{2\pi}} \frac{f(t_n)}{t_n^n \sigma_f(t_n)}\, , \quad \mbox{as $n \to \infty$}\, .\end{equation}
\end{theorem}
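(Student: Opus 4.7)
The plan is to apply Hayman's identity and show that the integral appearing there tends to $\sqrt{2\pi}$, which is exactly the Gaussian normalizing constant.

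First I would check that the $t_n$'s are legitimately defined for all large $n$. Since $f$ is strongly Gaussian, Lemma \ref{lema:gaussian Mf inf} gives $M_f = +\infty$, so $m:[0,R)\to[0,+\infty)$ is a diffeomorphism and there is a unique $t_n\in(0,R)$ with $m(t_n)=n$ for each $n\ge 1$, with $t_n\uparrow R$ as $n\to\infty$. Hayman's identity \eqref{eq:haymans formula} then reads
\begin{equation*}
a_n \;=\; \frac{f(t_n)}{2\pi\, t_n^{\,n}\,\sigma(t_n)}\int_{|\theta|<\pi\sigma(t_n)} \mathbf{E}(e^{\imath \theta \breve{X}_{t_n}})\,d\theta,
\end{equation*}
so the desired asymptotic formula \eqref{eq:asymptotic formula coeffs} is equivalent to
\begin{equation*}
\lim_{n\to\infty}\int_{|\theta|<\pi\sigma(t_n)} \mathbf{E}(e^{\imath \theta \breve{X}_{t_n}})\,d\theta \;=\;\sqrt{2\pi}.
\end{equation*}

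Second, I would split this integral against the Gaussian density in the natural way:
\begin{equation*}
\int_{|\theta|<\pi\sigma(t_n)} \mathbf{E}(e^{\imath \theta \breve{X}_{t_n}})\,d\theta
\;=\; \int_{|\theta|<\pi\sigma(t_n)} e^{-\theta^2/2}\,d\theta
\;+\; \int_{|\theta|<\pi\sigma(t_n)} \Big(\mathbf{E}(e^{\imath \theta \breve{X}_{t_n}})-e^{-\theta^2/2}\Big)d\theta.
\end{equation*}
By Definition \ref{defin:strongly gaussian}, strong Gaussianity directly controls both summands: since $\sigma(t)\to+\infty$ as $t\uparrow R$, and in particular $\sigma(t_n)\to+\infty$, the first integral tends to $\int_{\mathbb{R}} e^{-\theta^2/2}\,d\theta = \sqrt{2\pi}$; the second integral is bounded in absolute value by
\begin{equation*}
\int_{|\theta|<\pi\sigma(t_n)} \big|\mathbf{E}(e^{\imath \theta \breve{X}_{t_n}})-e^{-\theta^2/2}\big|\,d\theta,
\end{equation*}
which vanishes in the limit, again by Definition \ref{defin:strongly gaussian} applied along the sequence $t_n\uparrow R$.

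Combining the two limits yields the desired convergence of the integral to $\sqrt{2\pi}$, and plugging back into Hayman's identity gives \eqref{eq:asymptotic formula coeffs}. I do not anticipate any serious obstacle here: the statement is essentially a corollary of the definition of strongly Gaussian power series together with Hayman's identity, which is why the definition was set up in that particular form. The only minor point requiring attention is making sure that $t_n$ is well defined for all large $n$ and that $t_n\uparrow R$, both of which are immediate from $M_f=+\infty$ and the strict monotonicity of $m$.
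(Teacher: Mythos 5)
Your argument is correct and is precisely the second route the paper itself indicates (``from Hayman's formula \eqref{eq:haymans formula} and strong gaussianity''): Hayman's identity at $t=t_n$ reduces the claim to showing the integral of $\E(e^{\imath\theta\breve{X}_{t_n}})$ over $|\theta|<\pi\sigma(t_n)$ tends to $\sqrt{2\pi}$, which follows by your splitting together with the two conditions in Definition \ref{defin:strongly gaussian}. Your preliminary verification that $t_n$ is well defined and $t_n\uparrow R$, via strong Gaussianity $\Rightarrow$ Gaussianity $\Rightarrow$ $M_f=+\infty$ (Lemma \ref{lema:gaussian Mf inf}), is exactly the point the paper handles the same way.
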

In the asymptotic formula above,  $t_n$ is given by $m_f(t_n)=n$, for each $n \ge 1$, which are uniquely defined because $M_f=+\infty$, see Lemma \ref{lema:gaussian Mf inf}. This asymptotic formula follows readily from Theorem \ref{teor:local central limit}, or alternatively, from Hayman's formula \eqref{eq:haymans formula} and strong gaussianity.

\

Actually, if $\omega_n$ is a good approximation of $t_n$, in the sense that
$$\lim_{n \to \infty} \frac{m_f(\omega_n)-n}{\sigma_f(\omega_n)}=0\, ,$$
then
\begin{equation}\label{eq:formula asintotica coeficientes de Hayman bis}a_n \sim \frac{1}{\sqrt{2\pi}}\frac{f(\omega_n)}{\sigma_f(\omega_n) \, \omega_n^n}\, ,\quad \mbox{as $n \to \infty$}\,.
\end{equation}
In general, precise expressions for the $t_n$ are rare, since inverting $m_f(t)$ is usually  complicated. See, for instance the examples in Section \ref{section:basic families and some examples}. But, fortunately, in practice, one can do with a certain  asymptotic approximation due to B\'{a}ez-Duarte, \cite{BaezDuarte}, the Substitution Theorem,  which we now describe.

Suppose that $ f \in \K$ is strongly Gaussian. Assume that $\widetilde{m}(t)$ is continuous and monotonically increasing to $+\infty$ in $[0,R)$ and that $\widetilde{m}(t)$ is a good approximation of $m_f(t)$ in the sense that
\begin{equation}\label{eq:condition baez-duarte}
\lim_{t \uparrow R} \frac{m_f(t)-\widetilde{m}(t)}{\sigma_f(t)}=0\,.\end{equation}
Let  $\tau_n$ be  defined by $\widetilde{m}(\tau_n)=n$, for each $n \ge 1$.

\begin{theorem}[Substitution]\label{teor:baez-duarte} With the notations above, if $f(z)=\sum_{n=0}^\infty a_n z^n$ in $\K$ is strongly Gaussian and \eqref{eq:condition baez-duarte} is satisfied, then
$$a_n \sim  \frac{1}{\sqrt{2\pi}} \frac{f(\tau_n)}{\tau_n^n \sigma_f(\tau_n)}\, , \quad \mbox{as $n \to \infty$}\, .$$
\end{theorem}
This follows readily from \eqref{eq:formula asintotica coeficientes de Hayman bis}. Besides, if  $\widetilde{\sigma}(t)$ is such  that $\sigma_f(t) \sim  \widetilde{\sigma}(t)$ as $t \uparrow R$, we may further write
\begin{equation}\label{eq:formula de hayman-baez-duarte}a_n \sim  \frac{1}{\sqrt{2\pi}} \frac{f(\tau_n)}{\tau_n^n \widetilde{\sigma}(\tau_n)}\, , \quad \mbox{as $n \to \infty$}\, .\end{equation}

\

The exponential function $f(z)=e^z$ is strongly Gaussian; Hayman's asymptotic formula for $e^z$ is just Stirling formula.

The ordinary generating function of partitions of integers:  $P(z)=\prod_{j=1}^\infty (1/(1-z^j)$ is strongly Gaussian; Hayman's asymptotic formula for $P(z)$ as in Theorem \ref{teor:baez-duarte} and  involving the approximations collected in \eqref{eq:asymptotics media var partition function} becomes the Hardy-Ramanujan  asymptotic formula for the number of partitions of an integer. See \cite{BaezDuarte}, \cite{K_uno} and \cite{K_dos}.

The exponential generating function of partitions of sets: $B(z)=e^{e^z{-}1}$ is strongly Gaussian; Hayman's asymptotic formula for $B(z)$ as in Theorem \ref{teor:baez-duarte} and appealing to asymptotics of the Lambert function becomes the Moser-Wyman asymptotic formula for the Bell numbers, or number of partitions of a set of size $n$. See \cite{K_uno} and \cite{K_dos}.

\subsection{Hayman class}\label{section:Hayman class}

The class of Hayman consists of power series $f$ in $\K$ which satisfy some concrete and verifiable conditions which imply that $f$ is strongly Gaussian, see Theorem \ref{teor:hayman implica baez} below.

\medskip

\begin{defin}A power series $f\in \K $ is in the \textit{Hayman class} {\upshape{(}}or is Hayman-admissible or just $H$-admissible{\upshape{)}} if
	\begin{equation}
		\label{eq:condicion de varianza en Hayman}\mbox{\rm variance condition:}\quad \lim_{t \uparrow R} \sigma_f(t)=\infty\,,\end{equation} and for a certain function $h: [0,R)\rightarrow (0, \pi]$, which we refer to as a  {\mbox{\textrm{cut}}} between a \textrm{major} arc and a \textit{minor} arc, the following conditions are satisfied.
	\begin{align}\label{eq:h mayor caracteristica}\mbox{\rm major arc:} \quad
		&\lim_{t \uparrow R}\sup_{|\theta|\le h(t)\, \sigma_f(t)} \bigg|\E(e^{\imath \theta \breve{X}_t})e^{\theta^2/2}-1\bigg|=0\, , \\
		\label{eq:h menor caracteristica}\mbox{ \rm minor arc:} \quad
		&\lim_{t\uparrow R}\sigma_f(t) \, \sup_{h(t)\sigma_f(t)\le |\theta| \le \pi \sigma_f(t)} |\E(e^{\imath \theta \breve{X}_t})|=0\, .
\end{align}\end{defin}

\smallskip

Some authors include within the Hayman class   power series with a finite number of negative coefficients. This is not the case in the present paper.

\smallskip

For $f$ in the Hayman class,  the characteristic  function of $\breve{X}_t$ is uniformly approximated by  $e^{-\theta^2/2}$ in  the major arc, while it is uniformly $o(1/\sigma(t))$ in the minor arc.

\medskip

\begin{theorem}\label{teor:hayman implica baez} Power series in the Hayman class are strongly Gaussian.\end{theorem}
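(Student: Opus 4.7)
The variance condition $\sigma(t)\to\infty$ needed for strong gaussianity is literally the first axiom of the Hayman class, so the only thing left to establish is the $L^1$ convergence
$$J(t):=\int_{|\theta|<\pi\sigma(t)}\bigl|\E(e^{\imath\theta\breve X_t})-e^{-\theta^2/2}\bigr|\,d\theta\longrightarrow 0\quad\text{as $t\uparrow R$.}$$
The plan is to split $J(t)=J_{\mathrm{maj}}(t)+J_{\mathrm{min}}(t)$ at the cut $|\theta|=h(t)\sigma(t)$ supplied by the Hayman definition, and to bound each piece with the corresponding arc hypothesis.

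On the major arc, factoring out the Gaussian yields $|\E(e^{\imath\theta\breve X_t})-e^{-\theta^2/2}|=e^{-\theta^2/2}\,|\E(e^{\imath\theta\breve X_t})e^{\theta^2/2}-1|$, so taking the sup of the second factor and integrating $e^{-\theta^2/2}$ over all of $\R$ gives $J_{\mathrm{maj}}(t)\le\sqrt{2\pi}\cdot o(1)$ directly from hypothesis \eqref{eq:h mayor caracteristica}. On the minor arc, the triangle inequality yields
$$J_{\mathrm{min}}(t)\le 2\pi\sigma(t)\sup_{h(t)\sigma(t)\le|\theta|\le\pi\sigma(t)}|\E(e^{\imath\theta\breve X_t})|+\int_{|\theta|\ge h(t)\sigma(t)}e^{-\theta^2/2}\,d\theta;$$
the first summand is $o(1)$ immediately from \eqref{eq:h menor caracteristica}, while the second is the Gaussian tail beyond $h(t)\sigma(t)$, which vanishes provided that $h(t)\sigma(t)\to\infty$.

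The step that is not mere bookkeeping, and the one I expect to be the main obstacle, is therefore verifying that $h(t)\sigma(t)\to\infty$, since this is not explicitly demanded by the Hayman axioms. My idea is that $\E(\breve X_t^{\,2})=1$ forces the characteristic function $\phi_t(\theta):=\E(e^{\imath\theta\breve X_t})$ to be $1$-Lipschitz in $\theta$, because $|\phi_t'(\theta)|\le\E|\breve X_t|\le\sqrt{\E(\breve X_t^{\,2})}=1$. Suppose for contradiction that $h(t_n)\sigma(t_n)\le C$ along some sequence $t_n\uparrow R$ with $h(t_n)<\pi$ (the case $h(t_n)=\pi$ makes the minor arc empty and so is trivial). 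At $\theta_n:=h(t_n)\sigma(t_n)$ the major arc hypothesis gives $|\phi_{t_n}(\theta_n)|\ge(1-o(1))e^{-C^2/2}$, and shifting to $\theta_n':=\theta_n+1/\sigma(t_n)$, which lies in the minor arc for large $n$, the Lipschitz bound yields $|\phi_{t_n}(\theta_n')|\ge|\phi_{t_n}(\theta_n)|-1/\sigma(t_n)$, so $\liminf|\phi_{t_n}(\theta_n')|\ge e^{-C^2/2}>0$. But the minor arc hypothesis applied at $\theta_n'$ forces $\sigma(t_n)|\phi_{t_n}(\theta_n')|\to 0$, hence $|\phi_{t_n}(\theta_n')|\to 0$, contradicting the previous bound.

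Combining the three estimates yields $J(t)\to 0$ as $t\uparrow R$ and concludes the proof. The compatibility at the boundary of the two arcs, forced by the Lipschitz property, is the one genuinely delicate point; everything else is a direct unpacking of the Hayman hypotheses.
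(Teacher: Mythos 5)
Your proof is correct and follows essentially the same route as the paper's (the paper defers the proof to \cite{K_uno}, but the fully analogous argument for the uniformly Hayman case in the appendix shows the intended scheme): split at the cut, bound the major arc by $\sqrt{2\pi}$ times the major-arc supremum, bound the minor arc by its length times the minor-arc supremum plus the Gaussian tail, and show that $h(t)\sigma(t)\to\infty$ so the tail vanishes. The only cosmetic difference is in that last step: the paper combines the two arc conditions directly at the common endpoint $\theta=h(t)\sigma(t)$ to deduce $\sigma(t)\,e^{-h(t)^2\sigma(t)^2/2}\to 0$, whereas you argue by contradiction via a $1$-Lipschitz perturbation into the minor arc; both rest on the same incompatibility of the two hypotheses near the cut.
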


See \cite{K_uno} for a proof of Theorem \ref{teor:hayman implica baez}. We may consider membership in the Hayman class as a criterion for strong gaussianity.

\section{Local Central Limit Theorem and continuous families}
\label{section:local central limit continuous families}

To obtain asymptotic results for the coefficients of large powers of power series with nonnegative coefficients we will write down formulae (to be presented in Section \ref{section:hayman and large powers}) for the coefficients of the large powers in terms of the associated Khinchin families and Hayman's identities.

To handle the Hayman's identities of a  Khinchin family we shall use  the Local Central Limit Theorem for lattice variables in some integral form. This is Theorem \ref{teor:integral de diferencia caracteristicas}, as applied to a single lattice variable, and Theorem \ref{teor:integral de diferencia caracteristicas familias continuas}, which is a version of the Local Central Limit Theorem for continuous families of lattice variables. We provide a detailed  proof of the latter result which  requires some preliminary material which we will discuss  in Section \ref{section:continuous families of lattice variables}.

\subsection{Approximation and bound of characteristic functions}

For a random variable $Y$ and integer $n\ge 1$, we have for $\E\big(e^{\imath \theta Y/\sqrt{n}}\big)^n$ the following well known approximation,  Theorem \ref{teor:convergencia uniforme TLC}, and bound, Theorem \ref{teor:cota uniforme caracter TLC}, involving the third moment of $Y$.

\begin{theorem}\label{teor:convergencia uniforme TLC} There exists a constant $C>0$, such that for any random variable $Y$ verifying that $\E(Y)=0$, $\E(Y^2)=1$ and $\E(|Y|^3)<\infty$, it holds that
	$$
	\Big|\E\big(e^{\imath \theta Y/\sqrt{n}}\big)^n-e^{-\theta^2/2}\Big|\le C \frac{\E(|Y|^3)}{\sqrt{n}}\,, \quad \mbox{if $|\theta|\le \dfrac{\sqrt{n}}{\E(|Y|^3)}$ and $n \ge 1$}\,.$$
\end{theorem}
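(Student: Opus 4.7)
The natural strategy is to factorise $e^{-\theta^{2}/2} = \bigl(e^{-\theta^{2}/(2n)}\bigr)^{n}$ and to compare term by term with $\phi(\theta/\sqrt{n})^{n}$, where $\phi(u) = \E(e^{\imath uY})$. Setting $a = \phi(\theta/\sqrt{n})$ and $b = e^{-\theta^{2}/(2n)}$, so that $b^{n} = e^{-\theta^{2}/2}$ exactly, I would apply the elementary identity
\[
|a^{n} - b^{n}| \le n\,|a - b|\,\max(|a|, |b|)^{n-1}.
\]
The proof then requires (i) a sharp estimate on $|a - b|$, and (ii) a decay estimate on $\max(|a|,|b|)^{n-1}$ strong enough to absorb the polynomial factors in $\theta$ that appear in (i).

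For (i), I would Taylor expand $\phi$ using the bound $|e^{\imath x} - 1 - \imath x + x^{2}/2| \le |x|^{3}/6$ together with the normalisations $\E(Y) = 0$ and $\E(Y^{2}) = 1$, obtaining
\[
\phi(u) = 1 - \tfrac{u^{2}}{2} + R(u), \qquad |R(u)| \le \tfrac{|u|^{3}\,\E(|Y|^{3})}{6}.
\]
Combining this with $|e^{-x} - 1 + x| \le x^{2}/2$ for $x \ge 0$ (applied to $x = \theta^{2}/(2n)$) yields
\[
|a - b| \le \frac{|\theta|^{3}\,\E(|Y|^{3})}{6\,n^{3/2}} + \frac{\theta^{4}}{8\,n^{2}}.
\]
For (ii), $|b|^{n-1} \le e^{-\theta^{2}/4}$ for $n \ge 2$ is immediate. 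The key point is the bound on $|a|$: since Jensen forces $\E(|Y|^{3}) \ge (\E(Y^{2}))^{3/2} = 1$, the hypothesis $|\theta| \le \sqrt{n}/\E(|Y|^{3})$ already gives $\theta^{2} \le n$, and moreover
\[
|R(\theta/\sqrt{n})| \le \frac{\theta^{2}\,(|\theta|\,\E(|Y|^{3}))}{6\,n^{3/2}} \le \frac{\theta^{2}}{6n}.
\]
Consequently $|a| \le 1 - \theta^{2}/(2n) + \theta^{2}/(6n) = 1 - \theta^{2}/(3n) \le e^{-\theta^{2}/(3n)}$, and so $|a|^{n-1} \le e^{-\theta^{2}/6}$ for $n \ge 2$.

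Combining (i) and (ii), for $n \ge 2$,
\[
|a^{n} - b^{n}| \le \bigl(|\theta|^{3} e^{-\theta^{2}/6}\bigr) \cdot \frac{\E(|Y|^{3})}{6\sqrt{n}} + \bigl(\theta^{4} e^{-\theta^{2}/6}\bigr) \cdot \frac{1}{8n}.
\]
Since $\sup_{\theta \in \R} |\theta|^{k} e^{-\theta^{2}/6} < \infty$ for every $k$, and since $\E(|Y|^{3}) \ge 1$ absorbs the $1/n$ term into $\E(|Y|^{3})/\sqrt{n}$, the required universal bound follows. The case $n = 1$ is trivially handled: the constraint $|\theta| \le 1/\E(|Y|^{3})$ bounds $|a - b|$ by an absolute constant via a single Taylor expansion, and hence by $C\,\E(|Y|^{3})$.

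The one non-routine step is (ii): the cancellation producing $|a| \le 1 - \theta^{2}/(3n)$ hinges on the cubic remainder being at most one third of the quadratic term, and this is precisely what the hypothesis $|\theta| \le \sqrt{n}/\E(|Y|^{3})$ is engineered to guarantee. Without this, $|a|^{n-1}$ would fail to decay fast enough to tame the $\theta^{4}/n$ contribution in the estimate for $|a - b|$, and the product $n|a-b| \max(|a|,|b|)^{n-1}$ would not collapse to a bound of order $\E(|Y|^{3})/\sqrt{n}$.
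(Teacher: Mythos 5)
Your argument is correct. The paper does not prove this statement itself but simply cites \cite[Lemma 6.2, Chapter 7]{Gut}, and your proof is essentially the standard argument given there: the telescoping bound $|a^n-b^n|\le n\,|a-b|\max(|a|,|b|)^{n-1}$, the third-order Taylor estimates for $\phi$ and for $e^{-x}$, and the observation that Lyapunov's inequality $\E(|Y|^3)\ge 1$ together with the constraint $|\theta|\le \sqrt{n}/\E(|Y|^3)$ forces $|\phi(\theta/\sqrt{n})|\le e^{-\theta^2/(3n)}$, which supplies the Gaussian decay needed to absorb the polynomial factors. All the individual estimates check out, including the handling of $n=1$.
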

See, for instance, \cite[Lemma 6.2, Chapter 7]{Gut}. In particular,
\begin{equation}\label{eq:teorema limite central basico}\lim_{n \to \infty} \E\big(e^{\imath \theta Y/\sqrt{n}}\big)^n=e^{-\theta^2/2}\,, \quad \mbox{for each $\theta \in \R$}\,,\end{equation}
which is the Central Limit Theorem for sums of independent identically distributed copies of the variable $Y$ with the extra assumption of finite third absolute moment.

\begin{theorem}\label{teor:cota uniforme caracter TLC}  For  any random variable $Y$ such that $\E(Y)=0$, $\E(Y^2)=1$ and  $\E(|Y|^3)<\infty$, we have that
	$$\Big|\E\big(e^{\imath \theta Y/\sqrt{n}}\big)^n\Big|\le e^{-\theta^2/3}, \quad \mbox{for any $\theta$ such that  $|\theta|\le \dfrac{\sqrt{n}}{4\E(|Y|^3)}$ and $n \ge 1$}\,.$$
\end{theorem}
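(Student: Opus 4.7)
The plan is to reduce to a pointwise bound on the characteristic function at $u = \theta/\sqrt{n}$. Indeed, since raising to the $n$\Ndash th power preserves the inequality, it suffices to show
\[
\bigl|\E(e^{\imath u Y})\bigr|\le e^{-u^{2}/3},\qquad \text{for }|u|\le \tfrac{1}{4\E(|Y|^{3})}.
\]
Note that, by Lyapunov's inequality, $\E(|Y|^{3})\ge \E(Y^{2})^{3/2}=1$, so this range of $u$ is contained in $[-1/4,1/4]$.

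First, I would Taylor expand the complex exponential with third-order remainder: for every real $y$ and real $u$,
\[
\Bigl|\,e^{\imath u y}-1-\imath u y+\tfrac{u^{2}y^{2}}{2}\,\Bigr|\le \tfrac{|u y|^{3}}{6}.
\]
Taking expectations and using $\E(Y)=0$, $\E(Y^{2})=1$, I get
\[
\E(e^{\imath u Y})=1-\tfrac{u^{2}}{2}+\rho(u),\qquad |\rho(u)|\le \tfrac{|u|^{3}\,\E(|Y|^{3})}{6}.
\]
For $|u|\le 1/(4\E(|Y|^{3}))$ the remainder is dominated by
\[
|\rho(u)|\le \tfrac{u^{2}}{6}\cdot |u|\,\E(|Y|^{3})\le \tfrac{u^{2}}{24}.
\]
Since $u^{2}\le 1/16<2$, the quantity $1-u^{2}/2$ is nonnegative, so the triangle inequality yields
\[
\bigl|\E(e^{\imath u Y})\bigr|\le \Bigl(1-\tfrac{u^{2}}{2}\Bigr)+\tfrac{u^{2}}{24}=1-\tfrac{11 u^{2}}{24}.
\]

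Finally, I would invoke the elementary inequality $1-x\le e^{-x}$ (for all real $x$) with $x=11u^{2}/24$ and compare constants: since $11/24>1/3$,
\[
\bigl|\E(e^{\imath u Y})\bigr|\le e^{-11 u^{2}/24}\le e^{-u^{2}/3}.
\]
Raising to the $n$\Ndash th power and substituting $u=\theta/\sqrt{n}$ gives the desired bound $\bigl|\E(e^{\imath \theta Y/\sqrt{n}})^{n}\bigr|\le e^{-\theta^{2}/3}$ on the stated range of $\theta$.

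The proof is entirely routine; the only subtlety is making sure the remainder-from-Taylor is small enough that, after combining with the leading term $1-u^{2}/2$, the resulting coefficient in front of $u^{2}$ remains strictly larger than the target $1/3$. The choice of the constant $4$ in the denominator of the radius is precisely what makes this numerical check work with room to spare.
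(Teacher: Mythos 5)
Your proof is correct, and it is essentially the standard argument that the paper points to (step 1 of Gut's Lemma 6.2, Chapter 7): a third-order Taylor expansion of the characteristic function, the remainder bound $|\rho(u)|\le u^{2}/24$ on the range $|u|\le 1/(4\E(|Y|^{3}))$, and the inequality $1-x\le e^{-x}$ to pass from $1-\tfrac{11}{24}u^{2}$ to $e^{-u^{2}/3}$. The paper itself gives no proof beyond that citation, so nothing further needs comparing.
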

See, for instance, step 1 in the proof of \cite[Lemma 6.2, Chapter 7]{Gut}.

%
%
%
%
%
%

\subsection{Lattice distributed variables}

A random variable $Z$ is said to be \textit{lattice distributed} or a \textit{lattice random variable} if  $Z$ is nonconstant and for a certain  $a \in \R$ and  $h>0$ one has that $\P(Z\in a+h\Z)=1$. The largest  $h$ such that $\P\big(Z\in a+h\Z\big)=1$ is called the  \textit{gauge} of the lattice variable  $Z$. For gauge $h$, the variable  $(Z-a)/h$ takes integer values and has gauge 1.

\smallskip

Let  $f(z)=\sum_{n=0}^\infty a_n z^n$ be a power series in $\K$. Let $\mathcal{I}_f=\{n\ge 1:a_n \neq 0\}$. Recall from  Section \ref{section:scale} the notation  $Q_f=\mbox{gcd}\{\mathcal{I}_f\}$.

If $Q_f=1$, then each $X_t$ of the Khinchin family of $f$ is a lattice variable of gauge 1 and each $\breve{X}_t$ is a lattice variable of gauge $1/\sigma_f(t)$. The condition $Q_f=1$ is equivalent to the requirement that for each $d\ge 1$ there is $n \in \mathcal{I}_f$ such that $d \nmid n$.

In general, if $Q_f \ge 1$, then  each $X_t$ of the Khinchin family of $f$ is a lattice variable of gauge $Q_f$ and each $\breve{X}_t$ is a lattice variable of gauge $Q_f/\sigma_f(t)$.

\begin{lemma}\label{lema:caracteristica de reticular} If $Z$ is a lattice random variable with gauge  $h$, then
	$$|\E(e^{\imath \theta Z})|<1\,, \quad \mbox{ for  $0<\theta <2\pi/h$}\,.$$
	In particular, for each $0<\delta< \pi/h$, there exists  $\omega=\omega_\delta<1$ such that $$|\E(e^{\imath \theta Z})|\le \omega\, , \quad \mbox{for  $|\theta- \pi/h|\le \delta$}\,.$$
\end{lemma}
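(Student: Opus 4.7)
\smallskip

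The plan is to reduce to the integer-valued case and then exploit the strict inequality that comes from the condition that $Z$ is nonconstant with gauge exactly $h$. Since $Z$ is lattice with gauge $h$, we may write $Z = a + hW$ where $W$ is a nonconstant, integer-valued random variable of gauge $1$, and
$$\bigl|\E(e^{\imath \theta Z})\bigr| = \bigl|\E(e^{\imath \theta h W})\bigr|.$$
Setting $\phi = \theta h$, the range $\theta \in (0, 2\pi/h)$ corresponds to $\phi \in (0, 2\pi)$, and it suffices to show that $|\E(e^{\imath \phi W})| < 1$ for every such $\phi$.

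By the triangle inequality $|\E(e^{\imath \phi W})| \le 1$, and equality would force $e^{\imath \phi W}$ to be $\P$-almost surely a fixed unimodular constant $e^{\imath \alpha}$. This in turn means that the support of $W$ is contained in a single coset $\alpha/\phi + (2\pi/\phi)\Z$. Writing $\eta = 2\pi/\phi$, all pairwise differences of values in the support of $W$ must lie in $\eta \Z \cap \Z$. If $\eta$ is irrational, this intersection is $\{0\}$, contradicting the nonconstancy of $W$. If $\eta = p/q$ in lowest terms, a short computation gives $\eta \Z \cap \Z = p\Z$, so the gauge of $W$ is a multiple of $p$; since the gauge of $W$ equals $1$, we must have $p=1$, hence $\phi = 2\pi q$ for some positive integer $q$. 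But no such $q$ lies in the open interval $(0, 2\pi)$, a contradiction. This proves the first assertion.

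For the second assertion, fix $0 < \delta < \pi/h$. The interval $[\pi/h - \delta,\, \pi/h + \delta]$ is compact and, by the choice of $\delta$, contained in the open interval $(0, 2\pi/h)$. The function $\theta \mapsto |\E(e^{\imath \theta Z})|$ is continuous on $\R$ and, by the first part, strictly less than $1$ on this compact set. Hence it attains its maximum $\omega = \omega_\delta$ there, and $\omega < 1$, giving the uniform bound claimed.

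The only delicate point is the equality case in the triangle inequality, where one has to track carefully how the two lattice structures (the one coming from $e^{\imath \phi W}$ being constant, and the one coming from $W$ being integer-valued with gauge $1$) interact; the rest is a direct continuity and compactness argument.
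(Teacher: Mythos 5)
Your proof is correct and follows the standard route: the paper itself gives no argument for the first assertion (it simply cites Durrett, Section 3.5) and notes that the second follows from continuity, and your equality-case analysis of the triangle inequality --- reducing to an integer-valued variable of gauge $1$ and showing that $|\E(e^{\imath\phi W})|=1$ would force the support of $W$ into a coarser lattice, contradicting gauge $1$ for $\phi\in(0,2\pi)$ --- is exactly the classical proof being referenced, with the continuity-plus-compactness step for the uniform bound matching the paper's remark. No gaps; the only cosmetic point is that from ``all pairwise differences lie in $p\Z$'' one concludes the gauge is \emph{at least} $p$ (indeed a multiple of $p$), which still forces $p=1$ as you use it.
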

See \cite[Section 3.5]{Durrett}. The second part of the statement follows simply from continuity of the characteristic function.

%

The next result  is a standard inversion lemma whereas the mass function of a variable $U$ is expressed in terms of its characteristic function.

\begin{lemma} \label{lema:inversion variable reticular} Let $U$ be a lattice random variable with gauge $h$,  so that for certain  $a\in \R$ we have that $\P(U \in a+h \Z)=1$.
	Then
	$$\P(U=a+hk)=\frac{h}{2\pi}\int_{-\pi/ h}^{\pi/ h}\E(e^{\imath \theta U}) e^{-\imath \theta (a+kh)}\, d\theta\, , \quad \mbox{for each  $k \in \Z$}\, .$$
\end{lemma}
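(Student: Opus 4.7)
The plan is to reduce the statement to the familiar Fourier inversion for integer-valued random variables, and then undo the affine change of variables.

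First I would set $V = (U-a)/h$. Since $\P(U \in a+h\Z)=1$, the variable $V$ takes values in $\Z$, and $\P(V=k) = \P(U = a+hk)$ for every $k\in\Z$. The characteristic function of $V$ is related to that of $U$ by
\[
\E(e^{\imath \varphi V}) = e^{-\imath \varphi a/h}\,\E(e^{\imath (\varphi/h) U}), \qquad \varphi \in \R.
\]

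Next I would establish the inversion formula in the integer-valued case: for any integer-valued random variable $V$,
\[
\P(V=k) = \frac{1}{2\pi}\int_{-\pi}^{\pi} \E(e^{\imath \varphi V})\, e^{-\imath \varphi k}\, d\varphi, \qquad k \in \Z.
\]
This follows from the absolutely convergent Fourier expansion $\E(e^{\imath \varphi V}) = \sum_{j \in \Z}\P(V=j)\,e^{\imath \varphi j}$ (absolute convergence is immediate because $\sum_j \P(V=j) = 1$), which permits exchanging sum and integral, combined with the orthogonality relation $\int_{-\pi}^{\pi} e^{\imath(j-k)\varphi}\,d\varphi = 2\pi\,\delta_{jk}$.

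Finally, I would substitute the expression for $\E(e^{\imath \varphi V})$ in terms of $\E(e^{\imath\theta U})$ and perform the change of variables $\varphi = h\theta$, which turns the interval $[-\pi,\pi]$ into $[-\pi/h,\pi/h]$ and introduces the factor $h$:
\[
\P(U = a + hk) = \P(V=k) = \frac{h}{2\pi}\int_{-\pi/h}^{\pi/h} \E(e^{\imath \theta U})\, e^{-\imath \theta(a+hk)}\, d\theta,
\]
which is the claimed identity. There is no serious obstacle: the only point meriting a line of justification is the absolute convergence that legitimizes the interchange of summation and integration in the Fourier inversion step; the hypothesis on the gauge $h$ is not even used here, it only enters to guarantee that the length $2\pi/h$ of the interval of integration is the correct one (i.e., that one cannot shrink it further without losing information about $V$), which is exactly the content of Lemma \ref{lema:caracteristica de reticular}.
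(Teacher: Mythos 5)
Your proposal is correct and follows essentially the same route as the paper: the paper's proof simply says ``we may assume $a=0$ and $h=1$'' (your explicit reduction via $V=(U-a)/h$) and then inverts the Fourier series $\E(e^{\imath\theta U})=\sum_k p_k e^{\imath k\theta}$ by orthogonality, exactly as you do. Your added remarks on absolute convergence and on the gauge hypothesis being inessential for the identity itself are accurate but do not change the argument.
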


In other terms, for each attainable value $x$ of the variable $U$ we have that $$\P(U=x)=\frac{h}{2\pi}\int_{-\pi/ h}^{\pi/ h}\E(e^{\imath \theta U}) e^{-\imath \theta x}d\theta\, .$$

\begin{proof} We may assume that $a=0$ and $h=1$. For each $k \in \Z$, let  $p_k=\P(U=k)$. Then,   we have that $\sum_{k \in \Z} p_k=1$ and
	$$\E(e^{\imath \theta U})=\sum_{k \in \Z} p_k e^{\imath k \theta}\, ,\quad \mbox{for each $\theta \in \R$}\,,$$
	and therefore, that
	$p_k=\dfrac{1}{2\pi}\displaystyle\int_{-\pi}^\pi \E(e^{\imath \theta U}) e^{-\imath k \theta} d\theta\, , \quad \mbox{for each  $k \in \Z$}\,.$
\end{proof}

\subsection{Local Central Limit Theorem for lattice variables: integral form}

\begin{theorem}\label{teor:integral de diferencia caracteristicas} Let $Z$ be a lattice random variable with gauge  $h$ and such that  $\E(Z)=0$ and $\E(Z^2)=1$. Then
	$$\lim_{n \to \infty} \int\limits_{|\theta|\le \pi \sqrt{n}/h} \Big|\E(e^{\imath \theta Z/\sqrt{n}})^n-e^{-\theta^2/2}\Big|\, d\theta=0\,.$$
\end{theorem}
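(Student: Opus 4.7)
The plan is to fix $\varphi(\theta) = \E(e^{\imath \theta Z})$ and split the integration range $\{|\theta| \le \pi\sqrt{n}/h\}$ into three zones, each controlled by a different estimate. From $\E(Z) = 0$, $\E(Z^2) = 1$ one has the Taylor expansion $\varphi(\theta) = 1 - \theta^2/2 + o(\theta^2)$ near the origin, so there exists $\delta \in (0, \pi/h)$ small enough that
\[
|\varphi(\theta)| \le e^{-\theta^2/4}, \qquad |\theta| \le \delta.
\]
Separately, Lemma \ref{lema:caracteristica de reticular} gives $|\varphi(\theta)| < 1$ on $(0, 2\pi/h)$, and then continuity plus compactness produces $\omega = \omega_\delta < 1$ with $|\varphi(\theta)| \le \omega$ on the compact set $\delta \le |\theta| \le \pi/h$.

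With these two ingredients in hand I would work with the zones (I) $|\theta| \le A$, (II) $A \le |\theta| \le \delta\sqrt{n}$, and (III) $\delta\sqrt{n} \le |\theta| \le \pi\sqrt{n}/h$, where $A>0$ is fixed but large. On zone I, the CLT in the form \eqref{eq:teorema limite central basico} gives pointwise convergence $\varphi(\theta/\sqrt{n})^n \to e^{-\theta^2/2}$, and as soon as $n$ is large enough that $A/\sqrt{n} \le \delta$ the integrand is dominated by $e^{-\theta^2/4} + e^{-\theta^2/2}$; dominated convergence makes the zone I integral tend to $0$. On zone II, the pointwise bound $|\varphi(\theta/\sqrt{n})^n| \le e^{-\theta^2/4}$ still holds (since $|\theta/\sqrt{n}| \le \delta$), so the zone II integral is bounded by $\int_{|\theta| \ge A}(e^{-\theta^2/4} + e^{-\theta^2/2})\,d\theta$, which is arbitrarily small for $A$ large. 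On zone III the minor-arc bound $|\varphi(\theta/\sqrt{n})^n| \le \omega^n$ combined with the length $2(\pi/h - \delta)\sqrt{n}$ of the window yields $O(\omega^n\sqrt{n}) = o(1)$, plus the negligible Gaussian tail $\int_{|\theta|\ge \delta\sqrt{n}}e^{-\theta^2/2}\,d\theta$.

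The main obstacle is zone III, and this is precisely where the lattice hypothesis enters the proof: only through Lemma \ref{lema:caracteristica de reticular} can one assert the uniform spectral gap $|\varphi(\theta)| \le \omega < 1$ on $[\delta, \pi/h]$, which is what allows the exponentially small bound $\omega^n$ to absorb the growing window of length $O(\sqrt{n})$. Zones I and II require only the second moment of $Z$: the dominating function $e^{-\theta^2/4}$ is extracted from the $o(\theta^2)$ expansion of $\varphi$ at $0$, so no assumption of finite third absolute moment (as in Theorems \ref{teor:convergencia uniforme TLC} and \ref{teor:cota uniforme caracter TLC}) is needed to close the argument at the hypothesis level stated.
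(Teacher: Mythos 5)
Your proof is correct, and it differs from the paper's in one substantive way. The paper explicitly proves Theorem~\ref{teor:integral de diferencia caracteristicas} only \emph{under the extra assumption} $\E(|Z|^3)<\infty$: it splits the range at $\tau\sqrt{n}$ with $\tau=1/(2\E(|Z|^3))$, dominates the integrand on the inner region by $e^{-\theta^2/6}+e^{-\theta^2/2}$ via the third-moment bound of Theorem~\ref{teor:cota uniforme caracter TLC}, and handles the outer region with the spectral gap of Lemma~\ref{lema:caracteristica de reticular} exactly as you do in your zone III. Your argument has the same two-part architecture (Gaussian domination near the origin, exponential gap $\omega^n$ against a window of length $O(\sqrt{n})$ outside), but you extract the dominating function $e^{-\theta^2/4}$ from the second-order Taylor expansion $\varphi(\theta)=1-\theta^2/2+o(\theta^2)$, which requires only $\E(Z^2)=1$; this is the classical device from the standard proofs of the local CLT (Gnedenko, Durrett) that the paper cites but does not reproduce. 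The payoff is that your proof establishes the theorem under exactly its stated hypotheses, whereas the paper's written proof does not. Two minor remarks: your zones I and II can be merged, since the bound $|\varphi(\theta/\sqrt{n})|^n\le e^{-\theta^2/4}$ holds on all of $|\theta|\le\delta\sqrt{n}$ and dominated convergence applies directly there (the auxiliary parameter $A$ is harmless but unnecessary); and the pointwise convergence $\varphi(\theta/\sqrt{n})^n\to e^{-\theta^2/2}$ on the inner region indeed needs only the second moment, so no hidden third-moment dependence creeps back in. The trade-off is that the paper's version, being uniform in the third moment, is the one that generalizes to Theorem~\ref{teor:integral de diferencia caracteristicas familias continuas} for continuous families, where a uniform bound $\E(|Z_s|^3)\le\Gamma$ replaces your pointwise Taylor expansion; your argument would not transfer there without an additional uniformity hypothesis.
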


From the convergence of this integral, the usual statement of the Local Central Limit Theorem follows, see Section \ref{section:local central theorem usual}. In fact, Theorem \ref{teor:integral de diferencia caracteristicas} is what is actually shown (as an intermediate step)  in the usual proofs of the Local Central Limit Theorem, see,  for instance, \cite[Section 43]{Gnedenko} and the proof of \cite[Theorem 3.5.3]{Durrett}.

We give a proof below with the extra assumption that $\E(|Y|^3)<\infty$ anticipating and as preparation for the corresponding result, Theorem \ref{teor:integral de diferencia caracteristicas familias continuas}, for continuous families of lattice variables.

\begin{proof} The Central Limit Theorem, as in equation \eqref{eq:teorema limite central basico},  gives  that, as $n \to \infty$,  the integrand converges towards 0 for each $\theta \in \R$.
	
	Denote $\tau\triangleq 1/(2\E(|Z|^3)$. Theorem  \ref{teor:cota uniforme caracter TLC} provides us with  the bound
	$$|\E(e^{\imath \theta Z/\sqrt{n}})^n|\le e^{-\theta^2/6}\, , \quad \mbox{si $|\theta|\le \tau \sqrt{n}$}\,.$$
	
	Thus, dominated convergence implies that
	$$(\flat)\quad \lim_{n \to \infty} \int_{|\theta|\le \tau \sqrt{n}} \Big|\E(e^{\imath \theta Z/\sqrt{n}})^n-e^{-\theta^2/2}\Big|\, d\theta=0\,.$$

	\smallskip
	Lemma \ref{lema:caracteristica de reticular} gives, in particular,  a bound $\omega\in (0,1)$ such that
	$$|\E(e^{\imath\theta Z})|\le \omega\, , \quad \mbox{si $ \tau \le |\theta|\le \pi/h$}\, ,$$
	and so that
	$$|\E(e^{\imath\theta Z/\sqrt{n}})^n|\le \omega^n\, , \quad \mbox{si $ \tau \sqrt{n}\le |\theta|\le \pi\sqrt{n}/h$}\, .$$
	
	The bound
	$$\int\limits_{\tau \sqrt{n}\le |\theta|\le \pi\sqrt{n}/h}\Big|\E(e^{\imath \theta Z/\sqrt{n}})^n-e^{-\theta^2/2}\Big|\, d\theta\le
	\int\limits_{\tau \sqrt{n}\le |\theta|\le \pi\sqrt{n}/h}\Big(\omega^n +e^{-\theta^2/2}\Big)\, d\theta\, ,$$
	tends to 0 as $n \to \infty$. This together with $(\flat)$ gives the result.\end{proof}

\subsubsection{Local Central Limit Theorem}\label{section:local central theorem usual} Let  $Y$ be a random variable such that  $\E(Y)=0$ and  $\E(Y^2)=1$. Let $Y_1, Y_2, \ldots$ be a sequence of independent random variables each one of them  distributed like  $Y$.

Assume that $Y$ is a lattice variable with gauge  $h$ and  that $\P(Y\in a+h \Z)=1$, for some fixed  $a \in \R$.

\smallskip
For each $n \ge 1$, we denote $S_n=(Y_1+\dots+Y_n)/\sqrt{n}$.
The variable $S_n$ is likewise a lattice variable, with gauge  $h/\sqrt{n}$.

Let  $\mathcal{V}_n$ denote the set of values that the variable $S_n$ can take, i.e.,~$\mathcal{V}_n=\{a \sqrt{n} +k h /\sqrt{n}: k \in \Z\}$.

\smallskip

Applying Lemma \ref{lema:inversion variable reticular} to the variable $S_n$ and using that $\E(e^{\imath \theta S_n})=\E(e^{\imath \theta Y/\sqrt{n}})^n$, we have that
$$\P(S_n=x)=\frac{h}{2\pi\sqrt{n}}\int_{-\pi \sqrt{n}/h}^{\pi\sqrt{n}/h} \E(e^{\imath  \theta Y/\sqrt{n}})^n e^{-\imath \theta x}\, d\theta\, ,\quad \mbox{for every $x \in \mathcal{V}_n$}\,.$$

Since
$$\frac{1}{\sqrt{2\pi}}e^{-x^2/2}=\frac{1}{2\pi}\int_\R e^{-\theta^2/2} e^{-\imath \theta x}\, d\theta\, , \quad \mbox{for every $x \in \R$}\,,$$
we deduce that
$$\begin{aligned}\sup_{x \in \mathcal{V}_n} \Big|\frac{\sqrt{n}}{h}\P(S_n=x)-\frac{1}{\sqrt{2\pi}}e^{-x^2/2}\Big|&\le \frac{1}{2\pi}\int\limits_{|\theta|\le \pi \sqrt{n}/h}\Big|\E(e^{\imath \theta Y/\sqrt{n}})^n-e^{-\theta^2/2}\Big|\, d\theta\\&+
	\frac{1}{2\pi} \int\limits_{|\theta|>\pi\sqrt{n}/h} e^{-\theta^2/2}\, d\theta\, .\end{aligned}
$$

Theorem \ref{teor:integral de diferencia caracteristicas} allows us to conclude from the bound above that

\begin{corollary}[Local Central Limit Theorem]\label{teor:limite central local}
	With the notations above,
	$$\lim_{n\to \infty}
	\sup_{x \in \mathcal{V}_n} \left|\frac{\sqrt{n}}{h}\P(S_n=x)-\frac{1}{\sqrt{2\pi}}e^{-x^2/2}\right|=0\, .$$
\end{corollary}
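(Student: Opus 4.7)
The strategy is the one already outlined in the paragraphs preceding the statement: combine Fourier inversion for the lattice variable $S_n$ with the integral form of the local central limit theorem already established as Theorem \ref{teor:integral de diferencia caracteristicas}. The proof will be essentially a bookkeeping consequence, with no genuinely new estimate required.

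First I would apply Lemma \ref{lema:inversion variable reticular} to $S_n$, which is a lattice variable of gauge $h/\sqrt{n}$ supported on $\mathcal{V}_n=\{a\sqrt{n}+kh/\sqrt{n}:k\in\Z\}$. Using independence, $\E(e^{\imath\theta S_n})=\E(e^{\imath\theta Y/\sqrt{n}})^n$, so that for each $x\in\mathcal{V}_n$,
$$\P(S_n=x)=\frac{h}{2\pi\sqrt{n}}\int_{-\pi\sqrt{n}/h}^{\pi\sqrt{n}/h}\E(e^{\imath\theta Y/\sqrt{n}})^n\,e^{-\imath\theta x}\,d\theta.$$
On the other hand, the standard normal density has the Fourier representation $(1/\sqrt{2\pi})e^{-x^2/2}=(1/2\pi)\int_\R e^{-\theta^2/2}\,e^{-\imath\theta x}\,d\theta$ valid for every $x\in\R$.

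Next I would subtract these two identities, split the real line at $|\theta|=\pi\sqrt{n}/h$, move the absolute value inside the integrals, and use $|e^{-\imath\theta x}|=1$ to get the uniform bound
$$\sup_{x\in\mathcal{V}_n}\Big|\tfrac{\sqrt{n}}{h}\P(S_n=x)-\tfrac{1}{\sqrt{2\pi}}e^{-x^2/2}\Big|\le I_1(n)+I_2(n),$$
with
$$I_1(n)=\frac{1}{2\pi}\int_{|\theta|\le\pi\sqrt{n}/h}\Big|\E(e^{\imath\theta Y/\sqrt{n}})^n-e^{-\theta^2/2}\Big|\,d\theta, \qquad I_2(n)=\frac{1}{2\pi}\int_{|\theta|>\pi\sqrt{n}/h}e^{-\theta^2/2}\,d\theta.$$
This is exactly the inequality displayed in the paragraph preceding the statement.

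Finally, $Y$ is a lattice variable with gauge $h$, mean $0$ and variance $1$, so Theorem \ref{teor:integral de diferencia caracteristicas} applies directly and gives $I_1(n)\to 0$ as $n\to\infty$. Since $h$ is fixed and $\pi\sqrt{n}/h\to\infty$, the tail $I_2(n)$ of the Gaussian integral also tends to $0$. Combining both conclusions yields the asserted uniform limit. The only genuine difficulty is packaged inside Theorem \ref{teor:integral de diferencia caracteristicas} (controlling the characteristic function of $Y$ away from the origin up to the lattice period via Lemma \ref{lema:caracteristica de reticular}, and near the origin via the Gaussian approximation of Theorem \ref{teor:cota uniforme caracter TLC}); at the level of the corollary itself there is no remaining obstacle.
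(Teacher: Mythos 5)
Your proposal is correct and follows exactly the paper's own argument: Lemma \ref{lema:inversion variable reticular} applied to $S_n$, the Fourier representation of the Gaussian density, the resulting two-term bound, and Theorem \ref{teor:integral de diferencia caracteristicas} plus the Gaussian tail to conclude. Nothing is missing.
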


\

\subsection{Continuous families of lattice variables}\label{section:continuous families of lattice variables}
A family $(Z_s)_{s \in [a,b]}$ of random variables indexed in the real interval $[a,b]$ is said to be \textit{continuous} if it is continuous in distribution in the sense that if a sequence $(s_n)_{n \ge 1}$ with $s_n\in [a,b]$ converges to $s\in[a,b]$ then  $Z_{s_n}$ converges to $Z_s$ in distribution.

The family $(Z_s)_{s \in [a,b]}$ is said to be \textit{bounded} if $\sup_{t\in[a,b]} \E\left(\left|Z_t\right|\right)<+\infty$.

\begin{lem}\label{lemma:continuous and bounded}
	If  the family $(Z_s)_{s \in [a,b]}$ is continuous and bounded, then the function $$(s,\theta) \in [a,b]\times \R \longrightarrow \E(e^{\imath \theta Z_s }) \in \D\,$$
	is continuous.
\end{lem}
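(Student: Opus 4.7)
The plan is to verify sequential continuity directly. Fix $(s,\theta) \in [a,b]\times \R$ and let $(s_n,\theta_n)$ be any sequence in $[a,b]\times \R$ with $s_n \to s$ and $\theta_n \to \theta$. We want to show that $\E(e^{\imath \theta_n Z_{s_n}}) \to \E(e^{\imath \theta Z_s})$, and the natural move is to insert a hybrid term and split
\[
\bigl|\E(e^{\imath \theta_n Z_{s_n}}) - \E(e^{\imath \theta Z_s})\bigr|
\le \bigl|\E(e^{\imath \theta_n Z_{s_n}}) - \E(e^{\imath \theta Z_{s_n}})\bigr|
+ \bigl|\E(e^{\imath \theta Z_{s_n}}) - \E(e^{\imath \theta Z_s})\bigr|.
\]

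For the first summand I would use the elementary pointwise bound $|e^{\imath u x}-e^{\imath v x}| \le |u-v|\,|x|$ valid for all $u,v,x\in \R$. Taking expectations and applying the boundedness hypothesis $M \triangleq \sup_{t\in[a,b]} \E(|Z_t|) < \infty$ gives
\[
\bigl|\E(e^{\imath \theta_n Z_{s_n}}) - \E(e^{\imath \theta Z_{s_n}})\bigr|
\le |\theta_n - \theta|\, \E(|Z_{s_n}|) \le |\theta_n - \theta|\, M,
\]
which tends to $0$. This is precisely where the hypothesis $\sup_t \E(|Z_t|)<\infty$ is used; without it, the change in $\theta$ would not be controllable uniformly in $s$.

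For the second summand, the map $x \mapsto e^{\imath \theta x}$ is continuous and bounded on $\R$, so the hypothesis that $Z_{s_n} \to Z_s$ in distribution yields $\E(e^{\imath \theta Z_{s_n}}) \to \E(e^{\imath \theta Z_s})$ by the portmanteau theorem. Combining the two summands finishes the proof.

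The argument is quite direct and I do not foresee any serious obstacle; the only subtle point worth highlighting is that joint continuity (rather than separate continuity in $s$ and in $\theta$) does require the uniform first-moment bound, since one must estimate the change in the exponent and the change in the distribution simultaneously.
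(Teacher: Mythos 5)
Your proof is correct and follows essentially the same route as the paper: the same hybrid-term decomposition, the same bound $|\E(e^{\imath\theta_n Z_{s_n}})-\E(e^{\imath\theta Z_{s_n}})|\le|\theta_n-\theta|\,\E(|Z_{s_n}|)$ via $|e^{\imath x}-e^{\imath y}|\le|x-y|$ together with the uniform first-moment bound, and convergence in distribution for the remaining term (the paper cites L\'evy's convergence theorem where you invoke portmanteau, which is the same fact). Nothing to add.
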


We shall apply this lemma to the normalized version $\breve{X}_t$ of a Khinchin family of $f \in \K$ when $t$ is restricted to a closed interval $[a,b]\subset (0,R)$. For this family continuity is clear and regarding boundedness, observe that $\E(|\breve{X}_t|)\le \E(|\breve{X}_t|^2)^{1/2}=1$, for $t\in [a,b]$.  If $M_f<+\infty$ (and $R<+\infty$), and if, besides,  $\sigma_f(R)<\infty$ (see Section  \ref{section:extension of family}), then $(\breve{X}_t)$ is continuous and bounded for $t\in [a,R]$, (including $t=R$) for any $a \in (0,R)$.

\smallskip

The (un-normalized) Khinchin family $(X_t)$ of any $f$ in $\K$ with radius of convergence $R$ is continuous and bounded in any interval $[0,b]$ with $0 < b<R$. If $M_f<+\infty$ (and $R<+\infty$) the family $(X_t)_{t \in [0,R]}$ extended to the closed interval $[0,R]$ is continuous and bounded; observe that $\E(|X_t|)=\E(X_t)\le M_f$, for $t \in [0,R]$.

\begin{proof}
	Take two sequences $(\theta_n)_{n \geq 1} \subseteq \R$ and $(s_n)_{n \geq 1} \subseteq [a,b]$ which, respectively,  converge to $\theta \in \R$ and $s\in [a,b]$. First we write
	\begin{align}\label{eq:continua_characteristic}
		\E(e^{\imath\theta_n Z_{s_n}}) = \E(e^{\imath\theta Z_{s_n}})+\E(e^{\imath\theta_n Z_{s_n}}-e^{\imath\theta Z_{s_n}}).
	\end{align}
	
	By virtue of Levy's convergence theorem, the first term in the right-hand side of (\ref{eq:continua_characteristic}) converges, as $n \to \infty$, to $\E(e^{\imath\theta Z_s})$. We  have the bound
	\begin{align*}
		\left|\E(e^{\imath\theta_n Z_{s_n}}-e^{\imath\theta Z_{s_n}})\right|  \leq \left|\theta_n-\theta\right| \E(|Z_{s_n}|), \quad \mbox{for $n \ge 1$}\,,
	\end{align*}
	which  follows  from the numerical  inequality: $|e^{ix}-e^{iy}| \leq |x-y|$, for all $x,y \in \R$.
	
	Now using that $(Z_s)_{s \in [a,b]}$ is bounded,  we conclude that
	$$(s,\theta) \in [a,b]\times \R \longrightarrow \E(e^{\imath \theta Z_s }) \in \D\, ,$$
	is a continuous function.
\end{proof}

\smallskip

Let $(Z_s)_{s \in [a,b]}$ be a continuous family of lattice variables. For each $s \in [a,b]$, let $h(s)$ denote the gauge of $Z_s$. The gauge function $h$ is upper semicontinuous:
$$\limsup_{u \to s} h(u)\le h(s)\, , \quad \mbox{for each $s \in [a,b]$}\,.$$ In general, the gauge of a continuous family is not a continuous function. For instance, for $s \in [0,1/2]$, let the variable $Z_s$ take the values   $0,1/2$ and $1$ with respective probabilities $1/2~{-}s, 2s, 1/2~{-}s$. The family  $(Z_s)$ is continuous, and $h(s)=1/2$, for $s \neq 0$, but $h(0)=1$.

\smallskip

The next lemma is a counterpart of Lemma \ref{lema:caracteristica de reticular} for continuous and bounded  families; it follows from Lemma \ref{lema:caracteristica de reticular} by continuity and compactness.

\begin{lem}\label{lema:caracteristica de reticular familia continua}  Let  $(Z_s)_{s \in [a,b]}$ be a continuous and bounded family of lattice random variables, with {\underline{continuous}} gauge function $h(s)$.
	
	Let  $H>0$, be such that  $h(s)\le H$, for each $s \in [a,b]$.
	
	Then,
	$$|\E(e^{\imath \theta Z_s})|<1, \quad \mbox{if $s \in [a,b]$ and $0<|\theta|<2\pi/h(s)$}\, .$$
	
	In particular, for each $0<\tau<\pi/H$, there exists  $\omega_\tau <1$ such that $$|\E(e^{\imath \theta Z_s})|\le \omega_\tau, \quad \mbox{if $s \in [a,b]$ and $\tau\le|\theta|\le\pi/h(s)$}\, .$$
\end{lem}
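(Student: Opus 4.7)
The plan is to obtain the first (pointwise) assertion as a direct consequence of Lemma \ref{lema:caracteristica de reticular} applied to each $Z_s$, and then to upgrade it to the uniform second assertion by a compactness argument on a suitable subset of $[a,b]\times\R$.

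For the first assertion, I would fix $s\in[a,b]$ and apply Lemma \ref{lema:caracteristica de reticular} to $Z_s$, which is lattice with gauge $h(s)$; this gives $|\E(e^{\imath\theta Z_s})|<1$ whenever $0<\theta<2\pi/h(s)$, and conjugating the characteristic function extends the inequality to negative $\theta$ as well.

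For the uniform bound, I would introduce the set
\begin{equation*}
K_\tau=\{(s,\theta)\in [a,b]\times\R : \tau\le |\theta|\le \pi/h(s)\}.
\end{equation*}
Since $h$ is continuous and strictly positive on the compact interval $[a,b]$, it attains a positive minimum $h_{\min}$, whence $\pi/h(s)\le \pi/h_{\min}$ uniformly in $s$ and $K_\tau$ is bounded. Continuity of $h$ also allows the defining inequalities to pass to the limit, so $K_\tau$ is closed and therefore compact. By Lemma \ref{lemma:continuous and bounded} the map $(s,\theta)\mapsto\E(e^{\imath\theta Z_s})$ is continuous on $[a,b]\times\R$; so is its modulus, which then attains a maximum $\omega_\tau$ on $K_\tau$, say at some point $(s^\star,\theta^\star)$. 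There one has $0<\tau\le|\theta^\star|\le\pi/h(s^\star)<2\pi/h(s^\star)$, and the first assertion already proved gives $\omega_\tau=|\E(e^{\imath\theta^\star Z_{s^\star}})|<1$, which is the desired uniform bound.

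The only genuine step beyond the pointwise Lemma \ref{lema:caracteristica de reticular} is the compactness of $K_\tau$, and this rests squarely on the continuity of the gauge function $h$: the three-point example recalled just before the lemma illustrates that, for a merely upper semicontinuous $h$, the pointwise bound of the first assertion cannot in general be made uniform in $s$.
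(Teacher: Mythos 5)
Your proof is correct and follows exactly the route the paper indicates (the paper only remarks that the lemma ``follows by continuity from Lemma \ref{lema:caracteristica de reticular} and compactness''): pointwise strict inequality from Lemma \ref{lema:caracteristica de reticular}, then a maximum of the continuous modulus (via Lemma \ref{lemma:continuous and bounded}) over the compact set $K_\tau$, whose compactness is where the continuity of the gauge $h$ is genuinely used. Nothing to add.
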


\smallskip

The next theorem is  a uniform analogue of Theorem \ref{teor:integral de diferencia caracteristicas} for continuous families of lattice variables:

\begin{theor}\label{teor:integral de diferencia caracteristicas familias continuas} Let  $(Z_s)_{s \in[a,b]}$ be a continuous family of lattice variables with gauge function $h(s)$ continuous in $[a,b]$ and such that
	for each  $s \in [a,b]$ we have that $\E(Z_s)=0$, $\E(Z_s^2)=1$ and $\E(|Z_s|^3)\le \Gamma$, for a certain constant  $\Gamma>0$.
	
	\smallskip
	
	Then,
	$$\lim_{n \to \infty} \sup_{s \in [a,b]} \int_{-\pi \sqrt{n}/h(s)}^{\pi \sqrt{n}/h(s)} \Big|\E(e^{\imath \theta Z_s/\sqrt{n}})^n-e^{-\theta^2/2}\Big|\, d\theta=0\,.$$
\end{theor}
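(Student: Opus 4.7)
The plan is to adapt the argument given for Theorem \ref{teor:integral de diferencia caracteristicas} to the family setting, splitting the domain of integration into an inner arc and an outer arc, and making every estimate uniform in $s \in [a,b]$. I first note that, since each $Z_s$ is lattice, $h(s)>0$; continuity of $h$ on the compact interval $[a,b]$ then yields constants $0<h_{\min}\le H<\infty$ with $h_{\min}\le h(s)\le H$ for all $s$. The family $(Z_s)$ is bounded in the sense required by Lemma \ref{lemma:continuous and bounded} and Lemma \ref{lema:caracteristica de reticular familia continua}, since $\E(|Z_s|)\le \E(Z_s^2)^{1/2}=1$. I would then fix $\tau>0$ small enough that $\tau\le 1/(4\Gamma)$ and $\tau<\pi/H$.

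On the inner arc $|\theta|\le \tau\sqrt n$, the uniform bound $\E(|Z_s|^3)\le \Gamma$ combined with Theorem \ref{teor:cota uniforme caracter TLC} yields $|\E(e^{\imath \theta Z_s/\sqrt n})^n|\le e^{-\theta^2/3}$ uniformly in $s$, while Theorem \ref{teor:convergencia uniforme TLC} yields the pointwise uniform-in-$s$ bound $|\E(e^{\imath \theta Z_s/\sqrt n})^n-e^{-\theta^2/2}|\le C\Gamma/\sqrt n$. Given $\epsilon>0$, I would choose $A$ so that $\int_{|\theta|\ge A}(e^{-\theta^2/3}+e^{-\theta^2/2})\,d\theta<\epsilon$. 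The integral over $|\theta|\le A$ is then dominated by $2AC\Gamma/\sqrt n$, which vanishes uniformly in $s$ as $n\to\infty$, while the integral over $A\le|\theta|\le\tau\sqrt n$ is bounded by $\epsilon$ via the dominating integrable function $e^{-\theta^2/3}+e^{-\theta^2/2}$, again uniformly in $s$.

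On the outer arc $\tau\sqrt n\le|\theta|\le\pi\sqrt n/h(s)$, Lemma \ref{lema:caracteristica de reticular familia continua} (which applies because $\tau<\pi/H$) produces a constant $\omega=\omega_\tau<1$ such that $|\E(e^{\imath \phi Z_s})|\le\omega$ for every $s\in[a,b]$ and every $\phi$ with $\tau\le|\phi|\le\pi/h(s)$. Substituting $\phi=\theta/\sqrt n$ gives $|\E(e^{\imath \theta Z_s/\sqrt n})^n|\le\omega^n$ throughout the outer arc, so the integral there is at most
$$\frac{2\pi\sqrt n}{h_{\min}}\,\omega^n+\int_{|\theta|\ge\tau\sqrt n} e^{-\theta^2/2}\,d\theta,$$
which tends to $0$ uniformly in $s$, using $h(s)\ge h_{\min}>0$.

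The delicate step is the outer arc: the continuity of the gauge function $h$ is precisely what powers the uniform bound $\omega_\tau<1$ supplied by Lemma \ref{lema:caracteristica de reticular familia continua}, and the strictly positive lower bound $h_{\min}$ ensures the length of the outer arc grows only like $\sqrt n$, so that $\omega^n$ dominates. Everything else is a routine uniformization of the standard proof, powered by the single hypothesis that the third absolute moments $\E(|Z_s|^3)$ are uniformly bounded by $\Gamma$.
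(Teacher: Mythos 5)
Your proof is correct and follows essentially the same route as the paper's: the same split at $\tau\sqrt{n}$ with $\tau\le 1/(4\Gamma)$ and $\tau<\pi/H$, Theorems \ref{teor:convergencia uniforme TLC} and \ref{teor:cota uniforme caracter TLC} on the inner arc, and Lemma \ref{lema:caracteristica de reticular familia continua} giving the uniform $\omega_\tau<1$ on the outer arc. The only cosmetic difference is that you make the inner-arc limit quantitative with an explicit $\epsilon$--$A$ truncation, where the paper invokes dominated convergence along arbitrary sequences $(s_n)$; these are equivalent.
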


\begin{remark} The hypothesis  $\sup_{s\in[a,b]} \E(|Z_s|^3)<\infty$ may be replaced by the assumption that $\sup_{s\in[a,b]} \E(|Z_s|^{2+\delta})<\infty$, for some $\delta>0$,  or even by assuming that the family $(|Z_s|^2)_{s\in [a,b]}$ is uniformly integrable.\end{remark}

\begin{proof} First, we pay attention to the range of integration $|\theta|\le \sqrt{n}/(4\Gamma)$.
	
	Theorem  \ref{teor:cota uniforme caracter TLC}  gives us that
	$$\Big|\E\big(e^{\imath \theta Z_s/\sqrt{n}}\big)^n\Big|\le e^{-\theta^2/3}, \quad \mbox{if $s\in[a,b]$ and $|\theta|\le \dfrac{\sqrt{n}}{4\Gamma}$}\,,$$
	while Theorem \ref{teor:convergencia uniforme TLC} gives us that
	$$
	\Big|\E\big(e^{\imath \theta Z_s/\sqrt{n}}\big)^n-e^{-\theta^2/2}\Big|\le C  \frac{\Gamma }{\sqrt{n}}\,, \quad \mbox{if $s \in [a,b]$ and $|\theta|\le \dfrac{\sqrt{n}}{\Gamma}$}\,.$$
	
	Therefore, for any sequence $(s_n)_{n \ge 1}$ extracted from $[a,b]$ we deduce,  from dominated convergence, that
	$$\lim_{n \to \infty}\int_{-\sqrt{n}/(4\Gamma)}^{ \sqrt{n}/(4\Gamma)} \Big|\E(e^{\imath \theta Z_{s_n}/\sqrt{n}})^n-e^{-\theta^2/2}\Big|\, d\theta=0\,,$$
	and, therefore, that
	$$(\star) \quad \lim_{n \to \infty} \sup_{s \in [a,b]} \int_{-\sqrt{n}/(4\Gamma)}^{ \sqrt{n}/(4\Gamma)} \Big|\E(e^{\imath \theta Z_s/\sqrt{n}})^n-e^{-\theta^2/2}\Big|\, d\theta=0\,.$$
	
	\smallskip
	
	Fix $0<J<H$ such that  $J\le h(s)\le H$, for each $s \in [a,b]$. Take  $\tau>0$ such that $\tau<1/(4\Gamma)$ and $\tau <\pi/H$.
	
	Since, for every $s\in[a,b]$, we have that  $\E(|Z_s|)\le \E(|Z_s|^2)^{1/2}=1$, the family $(Z_s)_{s\in [a,b]}$ is bounded. Thus, Lemma  \ref{lema:caracteristica de reticular familia continua}  shows that there exists
	$\omega_\tau<1$ such that
	$$|\E(e^{\imath \theta Z_s})|\le \omega_\tau, \quad \mbox{if $s \in [a,b]$ and $\tau\le|\theta|\le\pi/h(s)$}\, ,$$
	and so  such that
	$$|\E(e^{\imath \theta Z_s/\sqrt{n}})^n|\le \omega_\tau^n, \quad \mbox{if $s \in [a,b]$ and $\tau \sqrt{n}\le|\theta|\le\pi \sqrt{n}/h(s)$}\, .$$
	
	Therefore, for each $s \in [a,b]$ we have that
	$$
	\int\limits_{\tau \sqrt{n}\le |\theta| \le \pi \sqrt{n}/ h(s)} \Big|\E(e^{\imath \theta Z_s/\sqrt{n}})^n-e^{-\theta^2/2}\Big|\, d\theta \le \omega_\tau^n \pi\sqrt{n}/{J}+\int\limits_{|\theta| \ge \tau \sqrt{n}} e^{-\theta^2/2} d \theta\, .$$
	Consequently,
	$$
	(\star\star)\quad \lim_{n \to \infty} \sup_{s\in[a,b]}\int\limits_{\tau \sqrt{n}\le |\theta| \le \pi \sqrt{n}/ h(s)} \Big|\E(e^{\imath \theta Z_s/\sqrt{n}})^n-e^{-\theta^2/2}\Big|\, d\theta=0\, .$$
	Combining $(\star)$ and $(\star\star)$ we obtain the result since $\tau<1/(4\Gamma)$.\end{proof}

\section{Coefficients of large powers}\label{section:large powers}

We now turn our attention to the  study of the asymptotic behavior of coefficients of large powers of functions $\psi$ in the class $\mathcal{K}$: the behaviour of the coefficient
$
\textsc{coeff}_{[k]}(\psi^n(z))$, as $n\to \infty$.

The objective of this section is to establish appropriate expressions for the Hayman's identities for the coefficients  $\textsc{coeff}_{[k]}(\psi^n(z))$ of (large) powers in terms of the Khinchin family of $\psi$: Lemma \ref{lemma:formula maestra para grandes potencias} and Lemma \ref{lemma:formula maestra para grandes potencias Mf finito}.

In forthcoming sections we will apply these lemmas to study the behaviour of the coefficient $\textsc{coeff}_{[k]}(\psi^n(z))$ under a variety of assumptions upon the joint behaviour of the index  $k$ and of the power $n$.  At a later stage, in Section \ref{section:coef h psi^n}, we will consider  the asymptotic behavior of the coefficients of $h(z)\psi^n(z)$ where $h(z)\in \mathcal{K}$.

\smallskip

\begin{remark}
	The results which follow  about large powers actually cover the general situation of $\psi$ with nonnegative coefficients, and not just $\psi \in \K$. To see this we may reason as follows.
	
	If $\psi$ has nonnegative coefficients and it is not in $\K$, then one of the two following possibilities occurs: 1) $\psi$ is a constant or  a monomial like $\psi(z)=b z^m$, for some $b \neq 0$ and integer $m \ge 1$, which are trivial situations as coefficients and large powers are concerned,  2) $\psi(0)=0$ and $\psi$ has at least two nonnegative coefficients, in which case, $\psi \in \K^s$ and for some integer $l \ge 1$ we have that $\varphi(z)\triangleq \psi(z)/z^l \in \K$, and thus $\textsc{coeff}_{[k]}(\psi^n)=\textsc{coeff}_{[k-nl]}(\varphi^n)$, for $k\ge nl$.
\end{remark}

\smallskip

Henceforth,  $\psi(z)=\sum_{j=0}^\infty b_j z^j$ will denote a power series in $\mathcal{K}$ with radius of  convergence $R > 0$, and from now on we let $(Y_t)_{t \in[0,R)}$ denote its Khinchin family.

\smallskip

We reserve  $k$ to denote index of coefficient and $n$ to denote power of $\psi$. In all asymptotic discussions below the power $n$ tends to $\infty$ (large powers),  while  the index $k$ could tend to $\infty$ in such a way that either $k \asymp n$ (including the case when $k/n$ tends to a finite nonzero limit) or $k=o(n)$ (including the possibility that $k$ could remain fixed) or $n=o(k)$ (which would require, as we shall see, that the power series $\psi$ is uniformly Gaussian). These three cases are discussed in Sections \ref{section:k comp n}, \ref{section:k=o(n)} and \ref{section:n=o(k)}, respectively.

%
%

\subsection{Auxiliary function $\phi$} Recall the notation of Section \ref{section:scale}:
$$Q_f=\gcd\{n\ge 1: a_n \neq 0\}=\lim_{N \to \infty} \gcd\{1\le n \le N: a_n \neq0\}$$
for any power series $f(z)=\sum_{n=0}^\infty a_n z^n$ in $\mathcal{K}$.

If $Q_\psi>1$, then $\psi(z)=\phi(z^{Q_\psi})$ for a certain auxiliary power series  $\phi \in \mathcal{K}$ which  has radius of convergence $R^{Q_\psi}$. Observe that $Q_\phi=1$ and that
$$\textsc{coeff}_{[kQ_\psi]}(\psi^n(z))=\textsc{coeff}_{[k]}(\phi^n(z^{Q_\psi})),$$
while for $q$ not a multiple of $Q_\psi$, we have that
$$\textsc{coeff}_{[q]}(\psi^n(z))=0\, .$$

\smallskip

Denote by $(Z_t)_{t \in (0,R^{Q})}$ the Khinchin family associated with this auxiliary function $\phi$, then, see Section \ref{section:scale},
\begin{align*}
	Y_t \stackrel{d}{=}Q_\psi \cdot Z_{t^{Q_\psi}}, \quad \text{ for any } t \in (0,R).
\end{align*}

The mean and variance functions of $\psi$ and $\phi$ are related by
$$
m_\psi(t)=Q_\psi \cdot m_\phi(t^{Q_\psi})\quad \mbox{and} \quad
\sigma^2_\psi(t)=Q_\psi^2 \cdot \sigma^2_\phi(t^{Q_\psi})\, .
$$

For each $t \in (0,R)$, the variable $Y_t$ is a lattice random variable with gauge $Q_\psi$. Likewise, the  normalized variable $\breve{Y}_t$ is a lattice random variable with gauge $h(t) = Q_\psi/\sigma_\psi(t)$.

\smallskip

In subsequent analysis, we shall obtain asymptotic formulae for $\textsc{coeff}_{[k]}(\psi(z)^n)$ first under the assumption that $Q_\psi=1$, and then in the general case when $Q_\psi\ge 1$, by considering the auxiliary power series $\phi$,  with $Q_\phi=1$, and translating the results back to $\psi$.

\subsection{Hayman's identities and large powers}\label{section:hayman and large powers}

We may express  $\textsc{coeff}_{[k]}(\psi(z)^n)$ in terms of the characteristic function of the normalized variables $(\breve{Y}_t)_{t \in [0,R)}$ by Hayman's identity as follows.

\begin{lem}\label{lemma:formula maestra para grandes potencias} With the notations above,
	\begin{equation} \label{eq:formula maestra para grandes potencias}
		\begin{aligned}\textsc{coeff}_{[k]}(\psi(z)^n)&=
			\frac{1}{2\pi} \frac{\psi^n(t)}{t^k}\frac{1}{\sqrt{n}}\frac{1}{\sigma_\psi(t)}\int\limits_{|\theta|\le \pi \sigma_\psi(t)  \sqrt{n}} \E\big(e^{\imath \theta\breve{Y}_t/\sqrt{n}}\big)^n e^{\imath(n m_\psi(t)-k)\theta/(\sigma_\psi(t)\sqrt{n})}d\theta\, ,\\
	\end{aligned}\end{equation}
	for any index $k\ge 1$, power $n \ge 1$ and  for all $t \in (0,R)$.
\end{lem}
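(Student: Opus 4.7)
The plan is to derive the identity by a straightforward chain: (i) Cauchy's integral formula on the circle $|z|=t$, (ii) rewrite $\psi(te^{\imath\theta})^n$ in terms of the characteristic function of $Y_t$, (iii) shift to the normalized variable $\breve{Y}_t$, and (iv) rescale the integration variable. There is no substantive obstacle here; the whole content is a bookkeeping computation that assembles Hayman's identity (Section \ref{section:haymans identity}) in the form suited for large powers.

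First I would fix $t\in(0,R)$ and apply Cauchy's formula on the circle $|z|=t$, writing $z=te^{\imath\theta}$, to obtain
\begin{equation*}
\textsc{coeff}_{[k]}(\psi(z)^n)=\frac{1}{2\pi t^k}\int_{-\pi}^{\pi}\psi(te^{\imath\theta})^n\,e^{-\imath k\theta}\,d\theta.
\end{equation*}
Next I would invoke the basic identity $\psi(te^{\imath\theta})/\psi(t)=\E(e^{\imath\theta Y_t})$ from Section \ref{section:Khinchin families}, take $n$-th powers, and pull out the factor $\psi(t)^n$, yielding
\begin{equation*}
\textsc{coeff}_{[k]}(\psi(z)^n)=\frac{\psi(t)^n}{2\pi t^k}\int_{-\pi}^{\pi}\E\bigl(e^{\imath\theta Y_t}\bigr)^n e^{-\imath k\theta}\,d\theta.
\end{equation*}

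Then, using $Y_t=\sigma(t)\breve{Y}_t+m(t)$, I would substitute $\E(e^{\imath\theta Y_t})^n=e^{\imath n m(t)\theta}\,\E(e^{\imath\theta\sigma(t)\breve{Y}_t})^n$, so that
\begin{equation*}
\textsc{coeff}_{[k]}(\psi(z)^n)=\frac{\psi(t)^n}{2\pi t^k}\int_{-\pi}^{\pi}\E\bigl(e^{\imath\theta\sigma(t)\breve{Y}_t}\bigr)^n e^{\imath(nm(t)-k)\theta}\,d\theta.
\end{equation*}
Finally I would perform the change of variable $\eta=\theta\,\sigma(t)\sqrt{n}$, so that $d\theta=d\eta/(\sigma(t)\sqrt n)$, the range $|\theta|\le\pi$ becomes $|\eta|\le\pi\sigma(t)\sqrt n$, the integrand $\E(e^{\imath\theta\sigma(t)\breve{Y}_t})^n$ becomes $\E(e^{\imath\eta\breve{Y}_t/\sqrt n})^n$, and the phase becomes $e^{\imath(nm(t)-k)\eta/(\sigma(t)\sqrt n)}$. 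Relabeling $\eta$ as $\theta$ gives exactly \eqref{eq:formula maestra para grandes potencias}.

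The only tiny remark needed is that all manipulations above are justified for every $t\in(0,R)$: $\psi$ is holomorphic on $\D(0,R)$, so Cauchy's formula applies; $\psi(t)>0$ for $t\in[0,R)$ so division by $\psi(t)^n$ is harmless; and $\sigma(t)>0$ for $t\in(0,R)$ so the rescaling is nondegenerate. No properties of the power $n$ or the index $k$ beyond $k,n\ge 1$ are used at this stage; the work of exploiting \eqref{eq:formula maestra para grandes potencias}, via Theorems \ref{teor:integral de diferencia caracteristicas} and \ref{teor:integral de diferencia caracteristicas familias continuas}, is deferred to the subsequent sections.
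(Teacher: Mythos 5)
Your proposal is correct and follows essentially the same route as the paper: Cauchy's formula on the circle $|z|=t$, the identity $\psi(te^{\imath\theta})/\psi(t)=\E(e^{\imath\theta Y_t})$, the shift to $\breve{Y}_t$, and the rescaling $\theta\mapsto\theta\,\sigma(t)\sqrt{n}$. The paper compresses these steps into a single displayed chain of equalities, but the computation is identical.
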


\begin{proof} For $t \in (0,R)$, Cauchy's integral formula gives that
	\begin{align*}
		\textsc{coeff}_{[k]}(\psi(z)^n) &= \frac{1}{2\pi \imath} \int_{|z| = t} \frac{\psi(z)^{n}}{z^{k+1}}dz= \frac{1 }{2\pi } \frac{\psi(t)^n}{t^{k}} \int_{|\theta| < \pi} \frac{\psi(te^{\imath  \theta})^n}{\psi(t)^n}e^{-\imath\theta k}d\theta \\
		&= \frac{1 }{2\pi } \frac{\psi(t)^n}{t^{k}}  \int_{|\theta|< \pi} \E(e^{\imath\theta Y_{t}})^ne^{-\imath \theta k} d\theta \\
		&= \frac{1}{2\pi} \frac{\psi^n(t)}{t^k}\frac{1}{\sqrt{n}}\frac{1}{\sigma_\psi(t)}  \int_{|\theta| < \pi \sigma_\psi(t)\sqrt{n}} \E(e^{\imath\theta \breve{Y}_t/\sqrt{n}})^{n}e^{\imath(n m_\psi(t)-k)\theta/(\sigma_\psi(t)\sqrt{n})} d\theta.
	\end{align*}
\end{proof}

Observe that the integral expression of \eqref{eq:formula maestra para grandes potencias} is greatly simplified if the radius $t$ is such that $(\star) \ \ n \, m_\psi(t)=k$. Whenever possible we will select and use that value of $t$ such that $(\star)$ holds exactly or at least approximately.

In the next sections, we shall use the formula \eqref{eq:formula maestra para grandes potencias} to study the asymptotic behaviour of $\textsc{coeff}_{[k]}(\psi^n(z))$, as $n\to \infty$, while $k\asymp n$, $k/n \to 0$ or $k/n\to \infty$.

\smallskip

A minor variation of the proof of Lemma  \ref{lemma:formula maestra para grandes potencias} above gives that if $H(z)$ is a function holomorphic in $\D(0,R)$, then,  with the notations above, we have that
\begin{equation} \label{eq:formula maestra para grandes potencias funcion h}
	\begin{aligned}&\textsc{coeff}_{[k]}(H(z)\psi(z)^n)\\&=
		\frac{1}{2\pi} \frac{\psi^n(t)}{t^k}\frac{1}{\sqrt{n}}\frac{1}{\sigma_\psi(t)}\int\limits_{|\theta|\le \pi \sigma_\psi(t)  \sqrt{n}}H\big(t e^{\imath \theta/(\sigma_\psi(t)\sqrt{n})}\big) \E\big(e^{\imath \theta\breve{Y}_t/\sqrt{n}}\big)^n e^{\imath(n m_\psi(t)-k)\theta/(\sigma_\psi(t)\sqrt{n})}d\theta\, ,\\
\end{aligned}\end{equation}
for any index $k\ge 1$, power $n \ge 1$ and  $t \in (0,R)$.

\medskip

If $M_\psi<\infty$ (and $R<\infty$), then $\psi$ extends to be continuous in $\cl(\D(0,R))$ and the Khinchin family $(Y_t)_{t \in [0,R)}$ extends to the closed interval by adding $Y_R$ given by
$$\P(Y_R=n)=b_n R^n /\psi(R)\,, \quad \mbox{for $n \ge 0$}\,.$$
See Section \ref{section:range of mean} and Section \ref{section:haymans identity} and the notations therein. In this case, we can take $t=R$ to obtain the following formula for the coefficients of  $\psi^n$.

\begin{lem}\label{lemma:formula maestra para grandes potencias Mf finito} With the notations above, if $R<\infty$ and  $M_\psi=m_\psi(R)<\infty$ and $\sigma_\psi^2(R)<\infty$, then
	\begin{equation} \label{eq:formula maestra para grandes potencias Mf finito}
		\begin{aligned}&\textsc{coeff}_{[k]}(\psi(z)^n)\\&=
			\frac{1}{2\pi} \frac{\psi^n(R)}{R^k}\frac{1}{\sqrt{n}}\frac{1}{\sigma_\psi(R)}\int\limits_{|\theta|\le \pi \sigma_\psi(R)  \sqrt{n}} \E\big(e^{\imath \theta\breve{Y}_R/\sqrt{n}}\big)^n e^{\imath(n m_\psi(R)-k)\theta/(\sigma_\psi(R)\sqrt{n})}d\theta\, ,\\
	\end{aligned}\end{equation}
	for any index $k\ge 1$ and  power $n \ge 1$.
\end{lem}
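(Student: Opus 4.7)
The strategy is to mirror the derivation of Lemma \ref{lemma:formula maestra para grandes potencias}, simply carrying Cauchy's formula out to the boundary circle $|z|=R$ instead of keeping $t$ strictly inside $(0,R)$. The hypotheses $R<\infty$, $M_\psi<\infty$ and $\sigma^2(R)<\infty$ together with Lemma \ref{lemma:char of Mf finite} and the discussion of Section \ref{section:extension of family} guarantee that
$$\sum_{j=0}^\infty b_j R^j<\infty,\qquad \sum_{j=0}^\infty j\, b_j R^j<\infty,\qquad \sum_{j=0}^\infty j^2 b_j R^j<\infty,$$
so that $\psi$ extends continuously to $\cl(\D(0,R))$, the Khinchin family extends to include $Y_R$ with $\E(Y_R)=m(R)$ and $0<\V(Y_R)=\sigma^2(R)<\infty$, and the normalized variable $\breve{Y}_R=(Y_R-m(R))/\sigma(R)$ is well defined.

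The first step is to establish the boundary Cauchy identity
\begin{equation*}
\textsc{coeff}_{[k]}(\psi(z)^n)\,R^k=\frac{1}{2\pi}\int_{-\pi}^{\pi}\psi(Re^{\imath\theta})^n\, e^{-\imath\theta k}\,d\theta.
\end{equation*}
Since $\sum_j b_j R^j<\infty$, the series $\psi(Re^{\imath\theta})=\sum_{j\ge 0} b_j R^j e^{\imath j\theta}$ converges absolutely and uniformly in $\theta$, so it is the Fourier series of the continuous function $\theta\mapsto \psi(Re^{\imath\theta})$. The class of absolutely convergent Fourier series being a Banach algebra, $\psi(Re^{\imath\theta})^n$ has an absolutely convergent Fourier expansion whose $k$-th coefficient is precisely $\textsc{coeff}_{[k]}(\psi^n)\,R^k$. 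Alternatively, one applies the interior formula from the proof of Lemma \ref{lemma:formula maestra para grandes potencias} on $|z|=t$ with $t<R$ and lets $t\uparrow R$: the prefactor $\psi(t)^n/t^k$ tends to $\psi(R)^n/R^k$, while the integrand $(\psi(te^{\imath\theta})/\psi(t))^n e^{-\imath\theta k}$ has modulus bounded by $1$ uniformly in $t$, and converges pointwise by continuity of $\psi$ on $\cl(\D(0,R))$, so dominated convergence closes the argument.

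The second step is a formal change of variables. Recognizing $\psi(Re^{\imath\theta})^n/\psi(R)^n=\E(e^{\imath\theta Y_R})^n$, and writing $\E(e^{\imath\theta Y_R})=\E(e^{\imath\theta \sigma(R)\breve{Y}_R})\,e^{\imath\theta m(R)}$, the identity of the first step becomes
\begin{equation*}
\textsc{coeff}_{[k]}(\psi(z)^n)=\frac{1}{2\pi}\,\frac{\psi(R)^n}{R^k}\int_{-\pi}^{\pi} \E\bigl(e^{\imath\theta\sigma(R)\breve{Y}_R}\bigr)^n\, e^{\imath(nm(R)-k)\theta}\,d\theta.
\end{equation*}
Substituting $\theta=\varphi/(\sigma(R)\sqrt{n})$ rescales the range $|\theta|<\pi$ to $|\varphi|<\pi\sigma(R)\sqrt{n}$, supplies the Jacobian $1/(\sigma(R)\sqrt{n})$, and produces exactly the integrand $\E(e^{\imath\varphi\breve{Y}_R/\sqrt{n}})^n e^{\imath(nm(R)-k)\varphi/(\sigma(R)\sqrt{n})}$, which yields \eqref{eq:formula maestra para grandes potencias Mf finito}.

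\textbf{Where the difficulty lies.} The only non-routine point is the justification of Cauchy's formula on the boundary circle, where $\psi$ is merely continuous rather than holomorphic; this is handled either via the absolute Fourier series, which uses only $\sum b_j R^j<\infty$ (equivalently $M_\psi<\infty$ together with $R<\infty$), or via the limiting argument $t\uparrow R$ with dominated convergence. The appearance of $\breve{Y}_R$ in the statement uses in addition the hypothesis $\sigma^2(R)<\infty$. Once these are settled, the rest is the same rescaling used in the proof of Lemma \ref{lemma:formula maestra para grandes potencias}.
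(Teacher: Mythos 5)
Your proposal is correct and follows exactly the route the paper intends: the paper gives no explicit proof of this lemma, presenting it as the boundary analogue of Lemma \ref{lemma:formula maestra para grandes potencias} via the continuous extension of $\psi$ and of the Khinchin family to $\cl(\D(0,R))$ described in Section \ref{section:extension of family}, which is precisely the computation you carry out. Your explicit justification of Cauchy's formula on $|z|=R$ (either through the Wiener-algebra argument or through $t\uparrow R$ with the bound $|\psi(te^{\imath\theta})|\le\psi(t)$ and dominated convergence) fills in the one step the paper leaves implicit, and the subsequent rescaling is identical to the interior case.
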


\section{Index $k$ and power $n$ are comparable}\label{section:k comp n}

In this section we discuss the asymptotic behaviour of $\textsc{coeff}_{[k]}(\psi^n(z))$ when the index $k$ and the power $n$ are such that $ k\asymp n$, as $n \to \infty$.

The plan is to analyse the integral term in the Hayman's identities  of Section \ref{section:hayman and large powers} when $ k\asymp n$, as $n \to \infty$,  by invoking the Local Central Limits Theorems of Section \ref{section:local central limit continuous families}.

\smallskip

Along this section we assume that,  for certain $A,B$, fixed, $0<A<B$, the index $k$ and the power $n$ are such that $A\le k/n\le B$.

\smallskip

To deal with this case, we assume additionally that $M_\psi>B$. This being the case,  we denote $a=m_\psi^{-1}(A)$ and $b=m_\psi^{-1}(B)$.

\smallskip

Assume first that $Q_\psi=1$; a restriction to be lifted shortly in Section \ref{section:general Q}. The family of random variables  $\breve{Y}_t$, where $t \in [a,b]$, is a continuous family of lattice random variables with gauge function $h(t) = 1/\sigma_\psi(t)$; see Section \ref{section:continuous families of lattice variables}. In particular, $\max\limits_{t \in[a,b]} \E(|\breve{Y}_t|^3)<+\infty$.

\smallskip

For each $n \ge 1$, define $\tau_n$  by $m_\psi(\tau_n) = k/n$. This choice is possible because $M_{\psi} > B$. For each $n \geq 1$, we have $\tau_n \in [a,b]$.

Taking $t = \tau_n$ in Hayman's identity \eqref{eq:formula maestra para grandes potencias}, the integral term in there simplifies to
$$
I_n\triangleq \int\limits_{|\theta|\le \pi \sigma_\psi(\tau_n)  \sqrt{n}} \E\big(e^{\imath\theta \breve{Y}_{\tau_n} /\sqrt{n}}\big)^n d\theta\, .$$
Since $\sigma_\psi(\tau_n)=1/h(\tau_n)$, Theorem \ref{teor:integral de diferencia caracteristicas familias continuas} gives that
\begin{equation}\label{eq:limite fuerte cuando k como n} \lim_{n \to \infty}\int\limits_{|\theta|\le \pi \sigma_\psi(\tau_n)  \sqrt{n}} \Big|\E\big(e^{\imath\theta \breve{Y}_{\tau_n} /\sqrt{n}}\big)^n -e^{-\theta^2/2}\Big|d\theta=0\, .\end{equation}
In particular, since $\min_{t \in[a,b]} \sigma_\psi(t) >0$, we conclude that
$\lim_{n \to \infty} I_n=\sqrt{2\pi}$,
and, thus, that
$$
\textsc{coeff}_{[k]}(\psi(z)^n)\sim
\frac{1}{\sqrt{2\pi}} \frac{\psi^n(\tau_n)}{\tau_n^k}\frac{1}{\sqrt{n}}\frac{1}{\sigma_\psi(\tau_n)}\, , \quad \mbox{as $n \to \infty$}.$$

\

\subsection{General $Q_\psi$}\label{section:general Q}

We now lift
the assumption that $Q_\psi=1$. To simplify notation, write $Q=Q_\psi\ge 1$. Consider the auxiliary function $\phi$, so that $\psi(z)=\phi(z^Q)$.

\smallskip

Recall that $m_\psi(t)=Q m_\phi(t^Q)$, let $A^\prime=A/Q$ and $B^\prime=B/Q$ and observe that  $M_\phi=M_\psi/Q>B/Q=B^\prime$.

Define $\tau_n^\prime$  by $m_\phi(\tau_n^\prime)=k^\prime/n$, for $k^\prime$ such that  $A^\prime < k^\prime/n<B^\prime$.

\smallskip

Since $Q_\phi=1$, we have that
$$(\natural) \quad \textsc{coeff}_{[k^\prime]}(\phi^n(z))\sim \frac{1}{\sqrt{2\pi}}\frac{\phi(\tau_n^\prime)^n}{(\tau_n^\prime)^{k^{\prime}}} \frac{1}{\sqrt{n}}\frac{1}{\sigma_\phi(\tau_n^\prime)}\,, \quad \mbox{as $n \to \infty$}\, .$$

\smallskip

Let $\tau_n=(\tau_n^\prime)^{1/Q}$. Take $k=k^\prime Q$, observe that $A<\dfrac{k}{n}<B$ and that
$$m_\psi(\tau_n)=Q\, m_\phi(\tau_n^\prime)=\frac{k^\prime Q}{n}=\frac{k}{n}\, ,$$
and
$$\sigma_\psi(\tau_n)=Q\, \sigma_\phi(\tau_n^\prime)\, .$$

Since $\textsc{coeff}_{[k]}(\psi^n)=\textsc{coeff}_{[k^\prime]}(\phi^n)$, translating $(\natural)$ into  terms of $\psi$, we get that
$$\begin{aligned} \textsc{coeff}_{k}(\psi^n(z))&\sim \frac{Q_\psi}{\sqrt{2\pi}}\frac{\psi(\tau_n)^n}{\tau_n^{k}} \frac{1}{\sqrt{n}}\frac{1}{\sigma_\psi(\tau_n)}\,,\\& \quad \mbox{as $n \to \infty$ while $A<k/n<B$ and $k$ is a multiple of $Q$}\, .\end{aligned}$$

\begin{theo}\label{teor:asymptotic powers with k comp n}
	For $0<A<B<M_\psi$, we have that
	$$
	\textsc{coeff}_{k}(\psi^n(z))\sim \frac{Q_\psi}{\sqrt{2\pi}}\frac{\psi(\tau_n)^n}{\tau_n^{k}} \frac{1}{\sqrt{n}}\frac{1}{\sigma_\psi(\tau_n)},$$
	as $n \to \infty$, while $A\le {k}/{n} \le B$ and $k$ is a multiple of $Q_\psi$, and where $\tau_n$ is given uniquely by $m_\psi(\tau_n)={k}/{n}$.
\end{theo}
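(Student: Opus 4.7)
The plan is to apply Hayman's identity from Lemma \ref{lemma:formula maestra para grandes potencias} at the cleverly chosen radius $t=\tau_n$, which annihilates the oscillating exponential factor, and then invoke the uniform local central limit theorem for continuous families of lattice variables (Theorem \ref{teor:integral de diferencia caracteristicas familias continuas}) to evaluate the remaining integral. I treat the case $Q_\psi=1$ first and then bootstrap the general case from the scaling relation $\psi(z)=\phi(z^{Q_\psi})$.

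Assume first $Q_\psi=1$, so $\breve{Y}_t$ is a lattice variable of gauge $h(t)=1/\sigma(t)$. Since $B<M_\psi$, the equations $m_\psi(a)=A$ and $m_\psi(b)=B$ have unique solutions $0<a<b<R$, and the equation $m_\psi(\tau_n)=k/n$ defines a unique $\tau_n\in[a,b]$ for every pair $(k,n)$ with $A\le k/n\le B$. Plugging $t=\tau_n$ into \eqref{eq:formula maestra para grandes potencias} makes $nm(\tau_n)-k=0$, so
$$\textsc{coeff}_{[k]}(\psi^n)=\frac{1}{2\pi}\,\frac{\psi(\tau_n)^n}{\tau_n^{k}}\,\frac{1}{\sqrt{n}\,\sigma(\tau_n)}\,I_n,\qquad I_n\triangleq\int_{|\theta|\le\pi\sigma(\tau_n)\sqrt{n}}\E\bigl(e^{\imath\theta\breve{Y}_{\tau_n}/\sqrt{n}}\bigr)^n\,d\theta.$$

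Next, I verify the hypotheses of Theorem \ref{teor:integral de diferencia caracteristicas familias continuas} for the family $(\breve{Y}_t)_{t\in[a,b]}$. Continuity in distribution and the fact that $[a,b]\subset(0,R)$ lies strictly inside the disk of convergence give continuity (and even real-analyticity) of $t\mapsto\E(|\breve{Y}_t|^3)$; by compactness this moment is bounded by some $\Gamma$ on $[a,b]$. Each $\breve{Y}_t$ has mean $0$ and variance $1$ by construction, and the gauge $h(t)=1/\sigma(t)$ is continuous on $[a,b]$ since $\sigma$ is and does not vanish there. The theorem therefore yields
$$\lim_{n\to\infty}\,\sup_{t\in[a,b]}\int_{|\theta|\le\pi\sigma(t)\sqrt{n}}\Bigl|\E\bigl(e^{\imath\theta\breve{Y}_t/\sqrt{n}}\bigr)^n-e^{-\theta^2/2}\Bigr|\,d\theta=0,$$
and specializing to $t=\tau_n$ gives $I_n\to\int_\R e^{-\theta^2/2}d\theta=\sqrt{2\pi}$, which is precisely the claimed asymptotic (with $Q_\psi=1$).

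For the general case $Q=Q_\psi\ge 1$, introduce the auxiliary series $\phi\in\K$ with $\psi(z)=\phi(z^Q)$ and $Q_\phi=1$. Write $k=k'Q$ (which is allowed since we restrict to $k$ divisible by $Q$) and set $\tau_n=(\tau_n')^{1/Q}$ where $m_\phi(\tau_n')=k'/n$. Then $\textsc{coeff}_{[k]}(\psi^n)=\textsc{coeff}_{[k']}(\phi^n)$ and, by the scaling formulas recalled in Section \ref{section:scale}, $m_\psi(\tau_n)=Q\,m_\phi(\tau_n')=k/n$, $\sigma_\psi(\tau_n)=Q\,\sigma_\phi(\tau_n')$, $(\tau_n')^{k'}=\tau_n^{k}$, and $\phi(\tau_n')^n=\psi(\tau_n)^n$. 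Substituting the $Q_\psi=1$ asymptotic for $\textsc{coeff}_{[k']}(\phi^n)$ and converting $1/\sigma_\phi(\tau_n')=Q/\sigma_\psi(\tau_n)$ produces the factor $Q_\psi$ in the conclusion.

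The main obstacle is ensuring the hypothesis $\sup_{t\in[a,b]}\E(|\breve{Y}_t|^3)<\infty$ in Theorem \ref{teor:integral de diferencia caracteristicas familias continuas}; once this is in place, everything else is either direct computation (Hayman's identity) or bookkeeping through the change of variables $\psi(z)=\phi(z^Q)$. The uniform boundedness of the third normalized moment on a compact subinterval of $(0,R)$ is indeed harmless since all the power-series moments $\sum_j j^r b_j t^j/\psi(t)$ are continuous and finite on $[a,b]$, but it is the only nontrivial input apart from the local central limit theorem itself.
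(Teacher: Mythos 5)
Your proposal is correct and follows essentially the same route as the paper: Hayman's identity at the radius $\tau_n$ with $m_\psi(\tau_n)=k/n$, the uniform integral local central limit theorem for the continuous family $(\breve{Y}_t)_{t\in[a,b]}$ to get $I_n\to\sqrt{2\pi}$, and the scaling $\psi(z)=\phi(z^{Q_\psi})$ to lift the restriction $Q_\psi=1$. The extra care you take in verifying the uniform third-moment bound is exactly the point the paper also singles out (by compactness of $[a,b]\subset(0,R)$), so there is nothing to add.
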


\subsection{With further information on $k/n$}

We assume now that  for some $L$ such that $0<L <+\infty$ and $L \le M_\psi$, we have that $ k/n \to L$, as $n \to \infty$. With that information, we may  sharpen the  asymptotic formula for $\textsc{coeff}_{[k]}(\psi(z)^n)$ of Theorem \ref{teor:asymptotic powers with k comp n}.

Assume that $k/n\to L$, where $0<L<+\infty$ and $L \le M_\psi$, and, in fact, in such a way that for some $\omega \in \R$
$$\frac{k}{n}=L+\omega \frac{1}{\sqrt{n}}+o\Big(\frac{1}{\sqrt{n}}\Big)\, , \quad \mbox{as $n\to \infty$}\,.$$
or, equivalently, that
\begin{equation}\label{eq:lim L and omega}\lim_{n \to \infty} \frac{nL-k}{\sqrt{n}}=-\omega\, .\end{equation}

\smallskip

\noindent $\bullet$ \textit{Suppose  that $L <M_\psi$}. Choose $\tau \in (0,R)$ such that $m_\psi(\tau)=L$. Observe that now we have a fixed value of  $\tau$, and no  $\tau_n$ varying with $n$.

Assume first that $Q_\psi=1$. If we choose $t = \tau$ in formula \eqref{eq:formula maestra para grandes potencias} and invoke the Local Central Limit Theorem \ref{teor:integral de diferencia caracteristicas} we readily find that
$$\begin{aligned}\textsc{coeff}_{[k]} (\psi(z)^n) &\sim \frac{1}{2\pi} \frac{\psi^n(\tau)}{\tau^k} \frac{1}{\sqrt{n}} \frac{1}{\sigma_\psi(\tau)}\int_\R e^{\imath \omega \theta /\sigma_\psi(\tau)} e^{-\theta^2/2}\, d\theta\\
	&=\frac{e^{-\omega^2/(2\sigma_f^2(\tau))}}{\sqrt{2\pi}\,\sigma_f(\tau)} \, \frac{1}{\sqrt{n}}\frac{\psi^n(\tau)}{\tau^k}\, ,\, \mbox{as  $n\to \infty$}.\end{aligned}
$$

An argument  much like the one in Section \ref{section:general Q} allows us to deduce the general case $Q_\psi\ge1$ from the case $Q_\psi=1$.

\begin{theo}\label{teor:large powers k/n to L} If $L<M_\psi$ and $$\lim_{n \to \infty} \frac{nL-k}{\sqrt{n}}=-\omega\,,$$ then
	$$\textsc{coeff}_{[k]} (\psi(z)^n) \sim Q_\psi\,\frac{e^{-\omega^2/(2\sigma_\psi^2(\tau))}}{\sqrt{2\pi}\,\sigma_\psi(\tau)} \, \frac{1}{\sqrt{n}}\frac{\psi^n(\tau)}{\tau^k}\, ,\, \mbox{as  $n\to \infty$}.$$
	where $\tau$ is given by $m_\psi(\tau)=L$  and $k$ is a multiple of $Q_\psi$.
\end{theo}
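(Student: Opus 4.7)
The proof splits into the case $Q_\psi=1$ and a routine reduction to it for general $Q_\psi$; the method is essentially sketched already in the excerpt and amounts to plugging a well-chosen \emph{fixed} radius into Hayman's identity and invoking the integral-form Local Central Limit Theorem.

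I start with $Q_\psi=1$. Since $L<M_\psi$, the mean function $m_\psi$ is a diffeomorphism from $[0,R)$ onto $[0,M_\psi)$, so there is a unique $\tau\in(0,R)$ with $m_\psi(\tau)=L$, and crucially $\tau$ does \emph{not} depend on $n$. Applying Lemma~\ref{lemma:formula maestra para grandes potencias} at $t=\tau$ expresses the coefficient as
\begin{equation*}
\textsc{coeff}_{[k]}(\psi^n)=\frac{1}{2\pi}\,\frac{\psi^n(\tau)}{\tau^k}\,\frac{1}{\sqrt{n}\,\sigma(\tau)}\, I_n,
\end{equation*}
where
\begin{equation*}
I_n=\int_{|\theta|\le \pi\sigma(\tau)\sqrt{n}}\E\bigl(e^{\imath\theta\breve{Y}_\tau/\sqrt{n}}\bigr)^n\, e^{\imath(nL-k)\theta/(\sigma(\tau)\sqrt{n})}\,d\theta.
\end{equation*}
The claim then reduces to showing $I_n\to\sqrt{2\pi}\,e^{-\omega^2/(2\sigma^2(\tau))}$. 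Because $Q_\psi=1$, the variable $\breve{Y}_\tau$ is lattice with gauge $1/\sigma(\tau)$, and Theorem~\ref{teor:integral de diferencia caracteristicas} gives
\begin{equation*}
\lim_{n\to\infty}\int_{|\theta|\le \pi\sigma(\tau)\sqrt{n}}\bigl|\E\bigl(e^{\imath\theta\breve{Y}_\tau/\sqrt{n}}\bigr)^n-e^{-\theta^2/2}\bigr|\,d\theta=0.
\end{equation*}
Since the phase factor has modulus one, I may replace $\E(e^{\imath\theta\breve{Y}_\tau/\sqrt{n}})^n$ by $e^{-\theta^2/2}$ inside $I_n$ with $o(1)$ error. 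In the resulting integral, the hypothesis $(nL-k)/\sqrt{n}\to -\omega$ gives pointwise convergence of the phase to $e^{-\imath\omega\theta/\sigma(\tau)}$, and dominated convergence with dominator $e^{-\theta^2/2}$ on $\R$ yields the Gaussian Fourier transform
\begin{equation*}
I_n\longrightarrow \int_\R e^{-\theta^2/2}\,e^{-\imath\omega\theta/\sigma(\tau)}\,d\theta=\sqrt{2\pi}\,e^{-\omega^2/(2\sigma^2(\tau))},
\end{equation*}
proving the formula when $Q_\psi=1$.

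For general $Q\triangleq Q_\psi\ge 1$, write $\psi(z)=\phi(z^Q)$ with $\phi\in\K$ and $Q_\phi=1$, so that $\textsc{coeff}_{[k]}(\psi^n)=\textsc{coeff}_{[k/Q]}(\phi^n)$ whenever $Q\mid k$. Set $k'=k/Q$, $L'=L/Q$, $\tau'=\tau^Q$ with $m_\psi(\tau)=L$. The scaling identities of Section~\ref{section:scale} give $m_\phi(\tau')=L'$ and $\sigma_\phi(\tau')=\sigma_\psi(\tau)/Q$; moreover $(nL'-k')/\sqrt{n}\to -\omega/Q$. The case just established, applied to $\phi$ with parameter $\omega'=\omega/Q$, gives an asymptotic for $\textsc{coeff}_{[k']}(\phi^n)$, and the substitutions $\phi(\tau')^n=\psi(\tau)^n$, $(\tau')^{k'}=\tau^k$ and $(\omega')^2/\sigma_\phi^2(\tau')=\omega^2/\sigma_\psi^2(\tau)$ then reproduce exactly the stated formula, the explicit factor $Q_\psi$ appearing from $\sigma_\psi(\tau)=Q\sigma_\phi(\tau')$ in the denominator. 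The only genuine technical point is the evaluation of $\lim I_n$: a pointwise-only form of the LCLT is not sufficient because the Gaussian limit must be integrated against the oscillating phase over the growing interval $|\theta|\le\pi\sigma(\tau)\sqrt{n}$; this is exactly what the integral-form statement of Theorem~\ref{teor:integral de diferencia caracteristicas} is designed for, and it removes the difficulty.
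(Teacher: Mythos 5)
Your proof is correct and follows essentially the same route as the paper: fix $t=\tau$ with $m_\psi(\tau)=L$ in Hayman's identity, use the integral form of the local central limit theorem (Theorem~\ref{teor:integral de diferencia caracteristicas}) to replace $\E(e^{\imath\theta\breve{Y}_\tau/\sqrt{n}})^n$ by $e^{-\theta^2/2}$, evaluate the resulting Gaussian Fourier integral, and reduce general $Q_\psi$ to $Q_\psi=1$ via the scaling of Section~\ref{section:general Q}. The only (immaterial) discrepancy is the sign of the phase in the limiting integral, which does not affect the value since the Gaussian is even.
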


The case where $k=n{-}1$ and thus $L=1$ (and $\omega=0$) shall be used in the discussion of the Otter-Meir-Moon Theorem, Theorem \ref{teor:Otter-Meir-Moon}, on an asymptotic formula for the coefficients of the solutions of Lagrange's equation.

\medskip

\noindent $\bullet$ \textit{Suppose $L=M_\psi$ and $R<+\infty$. } Observe that $M_\psi<\infty$.

Now there is no $\tau\in (0,R)$ such that $m_\psi(\tau)=L$.
But $\psi$ extends to be continuous in $\cl(\D(0,R))$ and the family $(Y_t)_{t \in [0,R)}$ may be completed with a variable $Y_R$ which has $m_\psi(R)=M_\psi=L$, see Section \ref{section:extension of family}. If further $\sigma^2_\psi(R)=\V(Y_R)<\infty$, which occurs if $\sum_{n=0}^\infty n^2 b_n R^n<\infty$, then we may use formula \eqref{eq:formula maestra para grandes potencias Mf finito}
instead of formula \eqref{eq:formula maestra para grandes potencias}. By the same argument above, we then have

\begin{theo}\label{teor:large powers k/n to L=M_psi} If $M_\psi=L$ and $R<+\infty$ and $\sum_{n=0}^\infty n^2 b_n R^n <\infty$. If $$\lim_{n \to \infty} \frac{nL-k}{\sqrt{n}}=-\omega\,,$$ then
	$$\textsc{coeff}_{[k]} (\psi(z)^n) \sim Q_\psi\, \frac{e^{-\omega^2/(2\sigma_\psi^2(R))}}{\sqrt{2\pi}\,\sigma_\psi(R)} \frac{1}{\sqrt{n}}\frac{\psi^n(R)}{R^k}\,,$$
	where  $k$ is a multiple of $Q_\psi$.
\end{theo}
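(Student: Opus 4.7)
The strategy mirrors that of Theorem \ref{teor:large powers k/n to L}, with the single substantive change that the saddle-point radius is now the boundary point $t=R$ instead of an interior $\tau\in(0,R)$. This is made possible by the standing hypotheses $R<\infty$, $M_\psi=m_\psi(R)<\infty$, and $\sum_{n\ge 0} n^2 b_n R^n<\infty$, which ensure that $\psi$ extends continuously to $\cl(\D(0,R))$, that $Y_R$ is well defined with $\E(Y_R)=L$, and that $\sigma^2(R)=\V(Y_R)<\infty$, so that $\breve Y_R=(Y_R-m(R))/\sigma(R)$ is a genuine normalized lattice variable.

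First I would reduce to the case $Q_\psi=1$, exactly as in Section \ref{section:general Q}: write $\psi(z)=\phi(z^{Q_\psi})$, note that $\textsc{coeff}_{[k]}(\psi^n)=\textsc{coeff}_{[k/Q_\psi]}(\phi^n)$, observe $m_\phi(R^{Q_\psi})=L/Q_\psi$, $\sigma_\phi(R^{Q_\psi})=\sigma_\psi(R)/Q_\psi$, and that $(nL-k)/\sqrt n\to -\omega$ rephrases as $(n(L/Q_\psi)-k/Q_\psi)/\sqrt n\to -\omega/Q_\psi$. Up to the bookkeeping factor $Q_\psi$, it is therefore enough to prove the formula when $Q_\psi=1$, so that $\breve Y_R$ is lattice with gauge $1/\sigma(R)$.

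Next, with $Q_\psi=1$, I apply Lemma \ref{lemma:formula maestra para grandes potencias Mf finito} with $t=R$:
\begin{equation*}
\textsc{coeff}_{[k]}(\psi^n)=\frac{1}{2\pi}\frac{\psi^n(R)}{R^k}\frac{1}{\sqrt n\,\sigma(R)}\int_{|\theta|\le\pi\sigma(R)\sqrt n}\E\bigl(e^{\imath\theta\breve Y_R/\sqrt n}\bigr)^n\,e^{\imath(nm(R)-k)\theta/(\sigma(R)\sqrt n)}\,d\theta.
\end{equation*}
Since $m(R)=L$ and $(nL-k)/\sqrt n\to -\omega$, the exponential factor $e^{\imath(nm(R)-k)\theta/(\sigma(R)\sqrt n)}$ tends pointwise to $e^{-\imath\omega\theta/\sigma(R)}$ and has modulus one. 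Theorem \ref{teor:integral de diferencia caracteristicas}, applied to the single lattice variable $Z=\breve Y_R$ (which has $\E(Z)=0$, $\E(Z^2)=1$, gauge $1/\sigma(R)$), then gives
\begin{equation*}
\lim_{n\to\infty}\int_{|\theta|\le \pi\sigma(R)\sqrt n}\bigl|\E\bigl(e^{\imath\theta\breve Y_R/\sqrt n}\bigr)^n-e^{-\theta^2/2}\bigr|\,d\theta=0.
\end{equation*}
Combining these, the integral above converges to $\int_\R e^{-\imath\omega\theta/\sigma(R)}e^{-\theta^2/2}\,d\theta=\sqrt{2\pi}\,e^{-\omega^2/(2\sigma^2(R))}$, which yields the asymptotic formula in the case $Q_\psi=1$; the general case follows by the reduction of the first paragraph.

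The main (mild) obstacle is the justification that one may indeed take $t=R$ in the saddle-point identity and afterwards swap the integral with the two limits. The interior case $L<M_\psi$ treated in Theorem \ref{teor:large powers k/n to L} relies on the characteristic function of $\breve Y_\tau$ at an interior $\tau$; to reuse that argument verbatim one needs $Y_R$ to be a bona fide lattice random variable with finite nonzero variance. That is precisely what the hypothesis $\sum_{n\ge 0} n^2 b_n R^n<\infty$ secures (through the extension described in Section \ref{section:extension of family}), so the proof of Theorem \ref{teor:large powers k/n to L} transports without further difficulty.
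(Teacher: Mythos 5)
Your proposal is correct and follows essentially the same route as the paper: the paper's proof of this theorem is literally ``the same argument'' as for the interior case $L<M_\psi$, replacing the identity at an interior radius $\tau$ by Lemma \ref{lemma:formula maestra para grandes potencias Mf finito} at $t=R$, which is exactly what you do (including the reduction to $Q_\psi=1$ and the application of Theorem \ref{teor:integral de diferencia caracteristicas} to the single lattice variable $\breve Y_R$). The hypotheses $R<\infty$ and $\sum_n n^2 b_n R^n<\infty$ are used precisely as you say, to make $Y_R$ and $\sigma(R)<\infty$ available.
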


\begin{remark}
	There remains the case where $L=M_f$ and $R=+\infty$.  In this case,  since $M_f<
	~+\infty$, we have that $f$ is a polynomial of degree $\text{deg}(f)=L=M_f$ and thus, in particular, we have  that $L$ is an integer. In the extreme case when $k=Ln$, we have that $\textsc{coeff}_{[k]}(\psi^n)=b_L^n$, where $b_L$ is the $L$-th coefficient $\psi(z)$.
\end{remark}

\subsubsection{Binomial coefficients.}\label{section:binomial coeffs} As an illustration, we apply next Theorem \ref{teor:large powers k/n to L} to binomial coefficients. We take $\psi(z) = 1+z$,  which belongs to $\K$. In this case, we have
$$m_{\psi}(t) = \frac{t}{1+t}\,, \quad \mbox{and} \quad \sigma_\psi^2(t)=\frac{t}{(1+t)^2}\,, \quad \mbox{for $t \in (0,+\infty)$}\,.
$$
In particular,  $M_{\psi} = 1$.

Observe that
$$\textsc{coeff}_{[k]}(\psi(z)^n)=\binom{n}{k}\,, \quad \mbox{for $n \ge 1$ and $k\ge 1$}\,.$$

\medskip

\noindent $\bullet$ Let $p\in (0,1)$. For each $n \ge 1/p$, let $k=\lfloor pn\rfloor$. We have that
$$0\le p-\frac{k}{n}\le \frac{1}{n}\,.$$
Thus we may apply Theorem \ref{teor:large powers k/n to L} with $L=p$ and $\omega=0$, to deduce that
$$\binom{n}{\lfloor pn\rfloor}\sim \frac{1}{\sqrt{2\pi n p (1-p)}}\frac{1}{(1-p)^{n-k} p^k}\,, \quad \mbox{as $n \to \infty$}\,.$$
If $p=1/2$, we have that
$$\binom{n}{\lfloor n/2\rfloor}\sim\frac{2^{n{+}1}}{\sqrt{2\pi}\sqrt{n}}\,, \quad \mbox{as $n \to \infty$}\,.$$

\smallskip

\noindent $\bullet$ Let $p\in (0,1)$ and $\lambda \in \R$. For $n \ge N$, let $k=\lfloor p n+\lambda \sqrt{n}\rfloor$, where $N$ is chosen so that $ p n+\lambda \sqrt{n}\ge 1$, for $n \ge N$. Then
$$
\frac{k}{n}=p+\frac{\lambda}{\sqrt{n}}+O\Big(\frac{1}{n}\Big)\,, \quad \mbox{as $n \to \infty$}\,,
$$
and Theorem \ref{teor:large powers k/n to L} with $L=p$ and $\omega=\lambda$, gives us that
$$\binom{n}{\lfloor p n+\lambda \sqrt{n}\rfloor}\sim \frac{1}{\sqrt{2\pi n p (1-p)}}\frac{1}{(1-p)^{n-k} p^k}\, e^{-\lambda^2/(2p(1-p))}\,, \quad \mbox{as $n \to \infty$}\,.$$
For $p=1/2$ and $\lambda\in \R$ fixed, we have that
$$\binom{n}{\lfloor  n/2+\lambda \sqrt{n}\rfloor}\sim \frac{2^{n{+}1}}{\sqrt{2\pi}\sqrt{n}}\, e^{-2\lambda^2}\,, \quad \mbox{as $n \to \infty$}\,.$$

\subsubsection{Back to Local Central Limit Theorem} Let us assume that $\psi$ is the probability generating function of a certain random variable $X$ (with values in $\{0,1, 2\, \ldots\}$) which has mean $\mu $ and standard deviation $s$.

Assume further that $\psi$ has radius of convergence $R>1$ and that $Q_\psi=1$.

We have $M_\psi=\lim_{t \uparrow R} m_\psi(t)> m_\psi(1)=\mu$. For $\tau=1\in (0,R)$, we have
$m_\psi(\tau)=\mu$ and $\sigma_\psi(\tau)=s$.

Let $Y$ be the random variable $Y=(X-\mu)/s$. This variable $Y$ is a lattice random variable with gauge $h=1/s$, since $Q_\psi=1$. The variable $Y$ has $\E(Y)=0$ and $\E(Y^2)=1$.

\medskip

For each $n \ge 1$, let $S_n$ denote the random variable
$$
S_n=\frac{Y_1+\dots+Y_n}{\sqrt{n}}=\frac{X_1+\dots+X_n}{s\sqrt{n}}-\Big(\frac{\mu}{s}\Big)\sqrt{n}\,,$$
where $X_1, X_2, \ldots$ are independent copies of $X$ and $Y_j=(X_j-\mu)/s$, for $j \ge 1$.

For $k \in \Z$, denote $$v(k)=k\frac{1}{s\sqrt{n}}-\Big(\frac{\mu}{s}\Big) \sqrt{n}\,.$$
Let $\mathcal{V}_n$ denote the collection of values that  $S_n$ can take:
$$\mathcal{V}_n=\Big\{v(k): k \in \Z\Big\}\,.$$

\medskip

For $x \in \mathcal{V}_n$, let $k_x=s\sqrt{n} x +\mu n$, so that $v(k_x)=x$.
We have
$$\frac{k_x}{n}=\mu+ \frac{s}{\sqrt{n}} x\, .$$
Moreover, with the notations of the hypothesis of Theorem \ref{teor:large powers k/n to L}, we have that $L=\mu$ and $\omega=s x$.
Now we use that with $\tau=1$, we have that $m_\psi(1)=\mu(=L)$, $\psi(1)=1$, $\sigma_\psi(1)=s$ and $\omega^2/\sigma^2_\psi(1)=x^2$, and appealing to
Theorem \ref{teor:large powers k/n to L} we obtain that
$$\textsc{coeff}_{[k]}(\psi(z)^n)\sim \frac{1}{\sqrt{2\pi}}\frac{1}{s \sqrt{n}} e^{-x^2/2}\,, \quad \mbox{as $n \to \infty$}\,.$$

\medskip

Since
$$
\P(S_n=x)=\P(X_1+\dots+X_n=k_x)=\textsc{coeff}_{[k_x]}(\psi(z)^n)\,,$$
we deduce that
$$
\lim_{n \to \infty} s \sqrt{n}\, \P(S_n=x)=\frac{1}{\sqrt{2\pi}} e^{ -x^2/2}\,, \quad \mbox{as $n \to \infty$}\,,$$
which is \lq \lq consistent\rq\rq \, with the Local Central Limit Theorem as stated in Corollary \ref{teor:limite central local}.

\section{Index $k$ is little \lq o\rq\, of $n$}\label{section:k=o(n)}

In this section we deal with the large powers asymptotics when $k/n \to 0$ as $k,n \to \infty$.

For the discussion of this case and \textit{throughout this section we assume   that $\psi^\prime(0)\neq 0$}. This hypothesis implies that $Q_\psi=1$.

\

We will use formula \eqref{eq:formula maestra para grandes potencias} with an appropriate choice of $t$. To  precise the limit value of the integral term as $n \to \infty$, we shall appeal to the following lemma, akin to Lemma 2 in \cite{MeirMoon}.

\begin{lem}\label{lema:cota cuando a_1 no es cero} Suppose that $\psi \in \mathcal{K}$ and $\psi^{\prime}(0)\neq 0$, then for each $\theta_0 \in (0, \pi)$, there exists $c>0$ and $r>0$ such that
	$$
	|\E(e^{\imath\theta \breve{Y}_t})| = \frac{|\psi(te^{\imath \theta})|}{\psi(t)}\le e^{-ct}\,, \quad \mbox{if $t \le r$ and $\theta_0\le |\theta|\le \pi$}\,.$$
\end{lem}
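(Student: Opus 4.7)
The plan is to bound $|\psi(te^{i\theta})|/\psi(t)$ by expanding $|\psi(te^{i\theta})|^2$ as a double series and isolating a linear-in-$t$ gap that is forced by the hypothesis $\psi'(0)\neq 0$. Writing $\psi(z)=\sum_{n\ge 0}b_n z^n$, the identity $|\psi(te^{i\theta})|^2 = \psi(te^{i\theta})\,\overline{\psi(te^{i\theta})}$ together with $\psi(t)^2=\sum_{n,m\ge 0}b_n b_m t^{n+m}$ yields
$$\psi(t)^2 - |\psi(te^{i\theta})|^2 \;=\; \sum_{n,m\ge 0} b_n b_m\, t^{n+m}\bigl(1-\cos((n-m)\theta)\bigr),$$
which is a sum of non-negative terms.

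Next, I would isolate the contribution of the pairs $(n,m)=(1,0)$ and $(0,1)$. Together they give exactly $2 b_0 b_1\, t\,(1-\cos\theta)$. Since $b_0>0$ and $b_1=\psi'(0)>0$ by hypothesis, and since $1-\cos\theta \ge 1-\cos\theta_0>0$ on the arc $\theta_0\le|\theta|\le\pi$, dropping all other non-negative terms gives
$$|\psi(te^{i\theta})|^2 \;\le\; \psi(t)^2 - 2 b_0 b_1 (1-\cos\theta_0)\, t.$$

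Then I would divide by $\psi(t)^2$ and use $\psi(t)\to b_0$ as $t\downarrow 0$ to choose $r>0$ small enough that $\psi(t)^2 \le 2 b_0^2$ for $0\le t\le r$; this produces
$$\left(\frac{|\psi(te^{i\theta})|}{\psi(t)}\right)^2 \;\le\; 1 - \frac{b_1(1-\cos\theta_0)}{b_0}\, t \;\le\; \exp\!\left(-\frac{b_1(1-\cos\theta_0)}{b_0}\, t\right),$$
where the last step is the elementary inequality $1-x\le e^{-x}$. Taking square roots delivers the desired bound with $c=b_1(1-\cos\theta_0)/(2 b_0)>0$ and $r$ as chosen above.

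The argument is short and essentially mechanical; there is no substantive obstacle once the identity for $\psi(t)^2-|\psi(te^{i\theta})|^2$ is isolated. The only delicate point is that the dominant gap at small $t$ must be genuinely linear in $t$, and this is precisely what the hypothesis $\psi'(0)\neq 0$ guarantees: without it the first non-vanishing gap would be of order $t^k$ for some $k\ge 2$, and no bound of the form $e^{-ct}$ with $c$ independent of $t$ could be extracted uniformly near $t=0$.
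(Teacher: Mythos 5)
Your proof is correct and follows essentially the same route as the paper's: both isolate the linear-in-$t$ gap $2b_0b_1t(1-\cos\theta)$ between $\psi(t)^2$ and $|\psi(te^{\imath\theta})|^2$ and then use $\cos\theta\le\cos\theta_0<1$ on the arc together with $1-x\le e^{-x}$. Your exact double-series identity with nonnegative terms is a slightly cleaner way to reach that bound than the paper's Taylor expansion with an $O(t^2)$ remainder (whose uniformity in $\theta$ the paper leaves implicit), but the underlying idea is identical.
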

\begin{proof} Without loss of generality we assume  that $b_0=1$. We have
	$$|\psi(te^{\imath \theta})|^2=1+2b_1 t \cos \theta +O(t^2)\,, \quad \text{ as } \quad t \downarrow 0,$$
	and
	$$|\psi(t)|^2=1+2b_1 t +O(t^2)\,, \quad \text{ as } \quad t \downarrow 0.$$
	Therefore,
	$$
	|\E(e^{\imath\theta \breve{Y}_t})|^2  = \frac{|\psi(te^{\imath \theta})|^2}{\psi(t)^2}=1+2b_1 t\, (\cos\theta -1)+O(t^2)\,, \quad \text{ as } \quad t \downarrow 0.$$
	
	For $\theta_0\le |\theta|\le \pi$, we have $\cos \theta\le 1-\delta$, for certain $\delta >0$, which may depend upon $\theta_0$. Since, by hypothesis, $b_1>0$, we then have, for certain $r \in (0,R)$, that
	$$
	|\E(e^{\imath\theta \breve{Y}_t})|^2  = \frac{|\psi(te^{\imath \theta})|^2}{\psi(t)^2}\le 1-2b_1 t\delta+O(t^2)\, , \quad \mbox{for all $t \leq r$}\,,$$
	Therefore, for $t \le r$ we have that
	$$
	|\E(e^{\imath\theta \breve{Y}_t})|^2  = \frac{|\psi(te^{\imath \theta})|^2}{\psi(t)^2}\le 1-b_1 t\delta\le e^{-b_1 \delta t}\,, $$as claimed.\end{proof}

For $n$ large enough, we have that $k/n<M_\psi$ and thus we may define uniquely $\tau_n\in (0,R)$ such that  $m_\psi(\tau_n)=k/n$. Observe that  $\tau_n\to 0$, as $n \to \infty$.

We will use here the  results from Section \ref{section:basic moments near 0} about the moments of the Khinchin family $(Y_t)_{t \in[0,R)}$ when $t$ is close to $0$.
Since $b_1>0$, we have that
$$
(\natural) \quad \lim_{t \downarrow 0} t^{1/2}\,  \E(|\breve{Y}_t|^3)=\sqrt{\frac{b_0}{b_1}}\,.
$$

As $m_\psi(t)= \dfrac{b_1}{b_0}t+O(t^2)$, we have that
$$\tau_n \sim \dfrac{b_0}{b_1}\frac{k}{n}\, , \quad \mbox{as $n \to \infty$}\, .$$
Besides,
$$\sigma_\psi^2(\tau_n)\sim \frac{k}{n}\, ,\quad \mbox{as $n \to \infty$}\,.$$

For $t=\tau_n$ the expression $(\natural)$ above tells us that
$$(\flat)\qquad \frac{\E(|\breve{Y}_{\tau_n}|^3)}{\sqrt{n}}\sim\sqrt{\frac{1}{{k}}}\, , \quad \mbox{as $k \to \infty$}\,.$$

\smallskip

If we choose $t=\tau_n$ in formula \eqref{eq:formula maestra para grandes potencias}, the integral term simplifies to
$$
I_n\triangleq\int\limits_{|\theta|\le \pi \sigma_\psi(\tau_n)  \sqrt{n}} \E\big(e^{\imath \theta\breve{Y}_{\tau_n} /\sqrt{n}}\big)^n d\theta\, .$$
Let us see that the integral $I_n$ converges to $\sqrt{2\pi}$ as $n \to \infty$.

\medskip

Theorem \ref{teor:convergencia uniforme TLC} and $(\flat)$ give that
$$\lim_{n \to \infty}\E\big(e^{\imath \breve{Y}_{\tau_n} \theta/\sqrt{n}}\big)^n=e^{-\theta^2/2}, \quad \mbox{for all $\theta \in \R$}\,.$$
Theorem \ref{teor:cota uniforme caracter TLC} and $(\flat)$ give that there exists $N \ge 1$ such that if $n \ge N$, then
$$\big|\E\big(e^{\imath \breve{Y}_{\tau_n} \theta/\sqrt{n}}\big)^n\big|\le e^{-\theta^2/3}\, , \quad \mbox{for all $|\theta|\le \sqrt{k}/5$}\,.$$
By dominated convergence we have
$$\lim_{n \to \infty} \int\limits_{|\theta|\le \sqrt{k}/5} \big|\E\big(e^{\imath \breve{Y}_{\tau_n} \theta/\sqrt{n}}\big)^n-e^{-\theta^2/2}\big|\, d\theta=0\, .$$

\

We apply lemma \ref{lema:cota cuando a_1 no es cero} with $\theta_0=1/10$ to ascertain that there exists $N\ge 1$ and $C>0$ such that for all $n\ge N$ we have that
$$|\E(e^{\imath\theta Y_{\tau_n}})|= \frac{|\psi(\tau_n e^{\imath \theta})|}{\psi(\tau_n)}\le e^{-c \tau_n}\, , \quad \mbox{for $\frac{1}{10}\le |\theta|\le \pi$}\,,$$
and, therefore, that
$$\begin{aligned}|\E(e^{\imath\theta Y_{\tau_n}/(\sigma_\psi(\tau_n)\sqrt{n})})|^{n} = \Big|\frac{\psi(\tau_n e^{\imath \theta/(\sigma_\psi(\tau_n)\sqrt{n})})}{\psi(\tau_n)}\Big|^n&\le e^{-c n \tau_n}
	\, , \\ &\quad \mbox{for $\frac{1}{10}\sigma_\psi(\tau_n)\sqrt{n}\le |\theta|\le \pi\sigma_\psi(\tau_n)\sqrt{n}$}\,. \end{aligned}$$

Since for $n$ large enough we have that $\frac{1}{10}\sigma_\psi(\tau_n)\sqrt{n}\le \sqrt{k}/5$, we may bound
$$\int\limits_{\sqrt{k}/5\le |\theta|\le  \pi\sigma_\psi(\tau_n)\sqrt{n}} \big|\E(e^{\imath \theta\breve{Y}_{\tau_n}/\sqrt{n}})^n-e^{-\theta^2/2}\big|d \theta\le e^{-c n\tau_n} (8\pi/3)\sqrt{k}+\int\limits_{|\theta|\ge \sqrt{k}/3} e^{-\theta^2/2}\, d\theta\,,$$
for $n$ large enough.

As $n \to \infty$, the first summand of this bound converges to $0$, since $n \tau_n\sim (a_0/a_1)k$  and $k \to \infty$, while the second summand converges to 0 since $k \to \infty$. We conclude  that
$$\lim_{n\to \infty}\int\limits_{\sqrt{k}\le |\theta|\le  \pi\sigma_\psi(\tau_n)\sqrt{n}} \big|\E(e^{\imath \theta\breve{Y}_{\tau_n}/\sqrt{n}})^n-e^{-\theta^2/2}\big|d \theta=0\, ,$$
and, therefore, that
\begin{equation}\label{eq:limite fuerte cuando k=o(n)}
	\lim_{n \to \infty} \int\limits_{|\theta|\le  \pi\sigma_\psi(\tau_n)\sqrt{n}} \big|\E(e^{\imath \theta\breve{Y}_{\tau_n}/\sqrt{n}})^n-e^{-\theta^2/2}\big|d \theta=0\,. \end{equation}
In particular
$$
\lim_{n \to \infty} I_n=\lim_{n \to \infty} \int\limits_{|\theta|\le  \pi\sigma_\psi(\tau_n)\sqrt{n}} \E(e^{\imath \theta\breve{Y}_{\tau_n}/\sqrt{n}})^n\, d\theta=\sqrt{2\pi}\,.$$

In sum,
\begin{theo} \label{teor:asymptotic powers with k=o(n)}If $\psi^\prime(0)\neq 0$, then
	$$\begin{aligned}
		\textsc{coeff}_{[k]}(\psi(z)^n)&\sim
		\frac{1}{\sqrt{2\pi}} \frac{\psi^n(\tau_n)}{\tau_n^k}\frac{1}{\sqrt{n}}\frac{1}{\sigma_\psi(\tau_n)}
		\\ & \sim \frac{1}{\sqrt{2\pi}} \frac{\psi^n(\tau_n)}{\tau_n^k} \frac{1}{\sqrt{k}}
		\, , \quad \mbox{as $n \to \infty$}\, ,\end{aligned}$$
	where $k/n\to 0$ and $\tau_n$ is given by $m_\psi(\tau_n)=k/n$ (for $n$ large enough).
\end{theo}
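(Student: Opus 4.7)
The plan is to specialize the master identity of Lemma \ref{lemma:formula maestra para grandes potencias} at the radius $t=\tau_n$ defined by $m(\tau_n)=k/n$. Since $k/n\to 0$ and $\psi'(0)\neq 0$ ensures $m$ is a diffeomorphism from $[0,R)$ onto $[0,M_\psi)$ with $m(0)=0$, such a $\tau_n$ exists for $n$ large, and this choice kills the oscillatory factor $e^{i(nm(t)-k)\theta/(\sigma(t)\sqrt n)}$, leaving
$$\textsc{coeff}_{[k]}(\psi(z)^n)=\frac{1}{2\pi}\frac{\psi^n(\tau_n)}{\tau_n^k}\frac{1}{\sqrt n\,\sigma(\tau_n)}\, I_n,\qquad I_n=\int_{|\theta|\le \pi\sigma(\tau_n)\sqrt n}\E\!\left(e^{i\theta\breve Y_{\tau_n}/\sqrt n}\right)^n d\theta.$$
The whole task reduces to proving $I_n\to\sqrt{2\pi}$.

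The crucial inputs, all coming from $b_1=\psi'(0)>0$ and the expansions recorded in Section \ref{section:basic moments near 0}, are $\tau_n\sim (b_0/b_1)(k/n)$, $\sigma^2(\tau_n)\sim k/n$, and, most importantly, $\sqrt{\tau_n}\,\E(|\breve Y_{\tau_n}|^3)\to\sqrt{b_0/b_1}$, which rearranges to $\E(|\breve Y_{\tau_n}|^3)/\sqrt n\asymp 1/\sqrt k$. This last relation is what enables the asymptotic versions of Theorems \ref{teor:convergencia uniforme TLC} and \ref{teor:cota uniforme caracter TLC} to be applied with $Y=\breve Y_{\tau_n}$ along a scale of order $\sqrt k$.

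I would then split $I_n$ at $|\theta|=\sqrt k/5$. On the major arc $|\theta|\le\sqrt k/5$, Theorem \ref{teor:convergencia uniforme TLC} gives pointwise convergence of the integrand to $e^{-\theta^2/2}$, while Theorem \ref{teor:cota uniforme caracter TLC} supplies the Gaussian majorant $e^{-\theta^2/3}$ valid for $n$ large; dominated convergence delivers the contribution $\sqrt{2\pi}$. The main obstacle lies on the minor arc $\sqrt k/5\le|\theta|\le\pi\sigma(\tau_n)\sqrt n$, whose length grows like $\sqrt k$. The key tool here is Lemma \ref{lema:cota cuando a_1 no es cero} applied with $\theta_0=1/10$, which (again exploiting $b_1>0$) yields $|\psi(\tau_n e^{i\vartheta})|/\psi(\tau_n)\le e^{-c\tau_n}$ uniformly for $1/10\le|\vartheta|\le\pi$. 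Unwrapping the change of variables $\vartheta=\theta/(\sigma(\tau_n)\sqrt n)$ and raising to the $n$-th power gives $|\E(e^{i\theta\breve Y_{\tau_n}/\sqrt n})|^n\le e^{-cn\tau_n}$ throughout the minor arc. Because $n\tau_n\sim (b_0/b_1)k\to\infty$, this exponential decay crushes the $O(\sqrt k)$ length of the arc, and the Gaussian tail $\int_{|\theta|\ge\sqrt k/5}e^{-\theta^2/2}\,d\theta$ vanishes as well.

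Combining both arcs yields $I_n\to\sqrt{2\pi}$, proving the first equivalence. The second equivalence is merely the substitution $\sigma(\tau_n)\sim\sqrt{k/n}$, which turns $\sqrt n\,\sigma(\tau_n)$ into $\sqrt k$. The only bookkeeping subtlety is to ensure that the cutoff $\sqrt k/5$ simultaneously lies below the third-moment threshold $\sqrt n/(4\E(|\breve Y_{\tau_n}|^3))$ demanded by Theorem \ref{teor:cota uniforme caracter TLC} and above the pre-normalized threshold $(1/10)\sigma(\tau_n)\sqrt n$ needed for Lemma \ref{lema:cota cuando a_1 no es cero}; both follow automatically from $\sigma^2(\tau_n)\sim k/n$ and $\E(|\breve Y_{\tau_n}|^3)\sim \sqrt{(b_0/b_1)(n/k)}$ once $n$ is large.
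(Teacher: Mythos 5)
Your proposal is correct and follows essentially the same route as the paper: the same choice $t=\tau_n$ with $m(\tau_n)=k/n$ in the master identity, the same moment estimates near $0$ giving $\E(|\breve Y_{\tau_n}|^3)/\sqrt n\asymp 1/\sqrt k$, the same split of $I_n$ at $|\theta|=\sqrt k/5$ with dominated convergence on the major arc, and the same use of Lemma~\ref{lema:cota cuando a_1 no es cero} with $\theta_0=1/10$ to get the $e^{-cn\tau_n}$ bound that kills the minor arc of length $O(\sqrt k)$. The bookkeeping you flag at the end (the cutoff fitting between $(1/10)\sigma(\tau_n)\sqrt n$ and the third-moment threshold) is exactly the point the paper also verifies.
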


\subsection{With further information on $k/n$}\label{section:with more information on k/n}

If  further information on how fast $k/n$ tends to 0 is available, then we may express the asymptotic formula \eqref{teor:asymptotic powers with k=o(n)} directly in terms of $k$ and $n$ and not on $\tau_n$.

We maintain the assumption that $\psi^\prime(0)>0$.

\

The function $m_\psi$ is holomorphic and injective near 0. Its inverse $m_\psi^{-1}$ is actually the solution of Lagrange's equation  with data $z/m_\psi(z)=\psi(z)/\psi^{\prime}(z)$. Besides, $\ln \psi $ is holomorphic near 0, since as $\psi \in \K$, we have $\psi(0)>0$. Thus Lagrange's inversion formula gives the expansion
$$
\ln \psi \big(m_\psi^{-1}\big)(z)=\ln b_0+\sum_{j=1}^\infty B_j z^j,
$$
where
\begin{equation}\label{eq:formula of Bs}B_j=\frac{1}{j} \textsc{coeff}_{[j-1]}\bigg(\Big(\frac{\psi^\prime}{\psi}\Big) \Big(\frac{\psi}{\psi^\prime}\Big)^j\bigg)=\frac{1}{j} \textsc{coeff}_{[j-1]}\bigg( \Big(\frac{\psi}{\psi^\prime}\Big)^{j-1}\bigg)\, , \quad \mbox{for $j \ge 1$}\,.\end{equation}
For each $j \ge 1$, the coefficient $B_j$ depends only on $b_0, b_1, \ldots, b_j$.

\smallskip

Likewise,
$$
\ln \frac{m_\psi^{-1}(z)}{z} =\ln \Big(\frac{\psi}{\psi^\prime}\Big) \big(m_\psi^{-1}\big)(z)=\ln \frac{b_0}{b_1}+\sum_{j=1}^\infty C_j z^j,
$$
where
\begin{equation}\label{eq:formula of Cs}
	C_j=\frac{1}{j}\textsc{coeff}_{[j]} \Big(\frac{\psi}{\psi^\prime}\Big)^j=\frac{j{+}1}{j} B_{j{+}1}\, , \quad \mbox{for $j \ge 1$}\,.\end{equation}

\

Now, with the notations of the general discussion of the case $k/n \to 0$, we have that
$$
\ln \psi(\tau_n)^n =n \ln \psi(\tau_n)=n \ln \psi\Big(m_\psi^{-1}\Big(\frac{k}{n}\Big)\Big)=n \ln b_0+\sum_{j=1}^\infty B_j (k^j/n^{j{-}1})\,,$$
and also that,
$$
\ln \tau_n^k=k \ln \tau_n=k \ln m_\psi^{-1}\Big(\frac{k}{n}\Big)=k \ln \frac{b_0}{b_1}+k \ln \frac{k}{n}+ \sum_{j=1}^\infty C_j \frac{k^{j{+}1}}{n^j}.$$

Using formula \eqref{eq:formula of Cs}, and that $B_1=1$, we may write
\begin{equation}
	\begin{aligned}
		\ln \psi(\tau_n)^n -\ln \tau_n^k=(n-k) \ln b_0+k \ln b_1+k\ln\frac{n}{k}+k-\sum_{j=2}^\infty \frac{B_j}{j{-}1}\frac{k^j}{n^{j{-}1}}.
	\end{aligned}
\end{equation}

Let $\lambda=\limsup_{n \to \infty} {\ln k}/{\ln n}$ and let $J$ the smallest integer such that
$J> \lambda/(1-\lambda)$; if $\lambda=1$, we set $J=+\infty$. Thus
$$\sum_{j=J+1}^\infty \frac{B_j}{j{-}1}\frac{k^j}{n^{j{-}1}}=n O\Big(\frac{k}{n}\Big)^{J{+}1}=o(1)\,, \quad \mbox{as $n \to \infty$}\,,
$$
and then
$$\frac{\psi(\tau_n)^n}{\tau_n^k}\sim b_0^{n{-}k} b_1^k \frac{n^k \, e^k}{k^k} \exp\Big(-\sum_{j=2}^J \frac{B_j}{j{-}1}\frac{k^j}{n^{j{-}1}}\Big)\,, \quad \mbox{as $n \to \infty$}\,.
$$
Therefore
\begin{theo} \label{teor:asymptotic powers with k=o(n) bis} If $\psi^\prime(0)\neq 0$, and $\lambda$ and $J$ as above, then
	$$
	\textsc{coeff}_{[k]}(\psi(z)^n)
	\sim \frac{1}{\sqrt{2\pi}}  b_0^{n{-}k} b_1^k \frac{n^k \, e^k}{k^k \sqrt{k}} \exp\Big(-\sum_{j=2}^J \frac{B_j}{j{-}1}\frac{k^j}{n^{j{-}1}}\Big)
	\, , \quad \mbox{as $n \to \infty$}\, .$$
\end{theo}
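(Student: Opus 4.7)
The plan is to read Theorem \ref{teor:asymptotic powers with k=o(n) bis} as a corollary of Theorem \ref{teor:asymptotic powers with k=o(n)}: once the leading factor $\psi(\tau_n)^n/\tau_n^k$ has been extracted from Hayman's identity, all that remains is to rewrite this factor as an explicit function of $k$ and $n$ using Lagrange inversion near the origin. The calculation displayed in the paragraphs immediately preceding the statement already sets up the whole skeleton, so the task of the proof is to justify why the truncation after $j=J$ is legitimate.

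First I would invoke Theorem \ref{teor:asymptotic powers with k=o(n)} and write
\begin{equation*}
\ln\!\bigl(\textsc{coeff}_{[k]}(\psi^n)\bigr)=\ln\frac{1}{\sqrt{2\pi k}}+n\ln\psi(\tau_n)-k\ln\tau_n+o(1),
\end{equation*}
where $\tau_n=m_\psi^{-1}(k/n)\to 0$. Since $\psi^\prime(0)=b_1>0$, the map $m_\psi$ is holomorphic and locally invertible at $0$, so both $\ln\psi\circ m_\psi^{-1}$ and $\ln(m_\psi^{-1}(z)/z)$ are holomorphic in some disk $\{|z|<\rho\}$, with $\rho>0$. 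Applying Lagrange's inversion formula to $z/m_\psi(z)=\psi(z)/\psi^\prime(z)$ yields the coefficient identities \eqref{eq:formula of Bs} and \eqref{eq:formula of Cs} for $B_j$ and $C_j$. For $n$ sufficiently large we have $k/n<\rho$, so the two power series converge absolutely at $k/n$ and the rearrangement
\begin{equation*}
n\ln\psi(\tau_n)-k\ln\tau_n=(n-k)\ln b_0+k\ln b_1+k\ln\frac{n}{k}+k-\sum_{j=2}^\infty\frac{B_j}{j-1}\frac{k^j}{n^{j-1}}
\end{equation*}
is valid; this is obtained by combining the two series and simplifying with $B_1=1$ and $C_j=\tfrac{j+1}{j}B_{j+1}$.

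Next I would estimate the tail. From the holomorphy in $\{|z|<\rho\}$ there is a constant $M$ and some $\rho_0<\rho$ with $|B_j|\le M/\rho_0^{j}$ for all $j$. Hence
\begin{equation*}
\sum_{j=J+1}^\infty\frac{|B_j|}{j-1}\frac{k^j}{n^{j-1}}\le \frac{M\,n}{J}\sum_{j=J+1}^\infty\Bigl(\frac{k}{\rho_0\,n}\Bigr)^{\!j}=O\!\Bigl(n\bigl(k/n\bigr)^{J+1}\Bigr).
\end{equation*}
By the definition $\lambda=\limsup_{n\to\infty}\ln k/\ln n$ and the choice of $J$ as the smallest integer with $J>\lambda/(1-\lambda)$, for every $\varepsilon>0$ one has $k\le n^{\lambda+\varepsilon}$ for $n$ large, hence
\begin{equation*}
n\bigl(k/n\bigr)^{J+1}\le n^{1-(1-\lambda-\varepsilon)(J+1)}=o(1),
\end{equation*}
provided $\varepsilon$ is taken small enough so that $(1-\lambda-\varepsilon)(J+1)>1$, which is possible because $J+1>1/(1-\lambda)$. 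The case $\lambda=1$, where $J=+\infty$, is handled by keeping the full convergent series.

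Combining these pieces and exponentiating gives
\begin{equation*}
\frac{\psi(\tau_n)^n}{\tau_n^k}\sim b_0^{\,n-k}b_1^{\,k}\frac{n^k e^k}{k^k}\exp\!\Bigl(-\sum_{j=2}^{J}\frac{B_j}{j-1}\frac{k^j}{n^{j-1}}\Bigr),
\end{equation*}
and plugging this into the Theorem \ref{teor:asymptotic powers with k=o(n)} asymptotic yields the stated formula. The only genuine point of difficulty is the tail bound on the Lagrange expansion; everything else is bookkeeping. I expect the main obstacle to be precisely the justification that the choice of $J$ based on $\lambda$ kills all terms $j\ge J+1$ uniformly, which rests on having $k/n<\rho$ (automatic once $n$ is large) and on the Cauchy bound for $B_j$ from the positive radius of convergence of $\ln\psi\circ m_\psi^{-1}$.
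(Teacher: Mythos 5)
Your proposal is correct and follows essentially the same route as the paper: invoke Theorem \ref{teor:asymptotic powers with k=o(n)}, expand $n\ln\psi(\tau_n)-k\ln\tau_n$ via the Lagrange-inversion series for $\ln\psi\circ m_\psi^{-1}$ and $\ln(m_\psi^{-1}(z)/z)$, and truncate after $j=J$ using the choice of $J$ relative to $\lambda$. The only difference is that you make explicit the Cauchy coefficient bound $|B_j|\le M/\rho_0^{\,j}$ behind the tail estimate $nO((k/n)^{J+1})=o(1)$, which the paper states without justification; this is a welcome filling-in of a detail rather than a departure.
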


If $\lambda <1/2$, then $J=1$, and then
\begin{equation}\label{eq:k/n to infty and lambda less 1/2}
	\textsc{coeff}_{[k]}(\psi(z)^n)
	\sim  b_0^{n{-}k} b_1^k \frac{n^k }{k!}\sim \textsc{coeff}_{[k]}((b_0+b_1z)^n)
	\, , \quad \mbox{as $n \to \infty$}\, .\end{equation}

For the particular case where, $k=\lfloor \sqrt{n}\rfloor $, (with $\lambda=1/2$, and $J=2$), we have that
$$
\textsc{coeff}_{[k]}(\psi(z)^n)
\sim  b_0^{n{-}k} b_1^k \frac{n^k }{k!} e^{-B_2}
\, , \quad \mbox{as $n \to \infty$}\, .$$
where $B_2=(1/2)-(b_2 b_0)/b_1^2$.

\subsection{Particular case: $k$ fixed, while $n \to \infty$}

In this particular case, where $k$ remains fixed, while $n \to \infty$,  the analysis of the asymptotic behaviour of $\textsc{coeff}_{[k]}(\psi(z)^n)$ just requires the multinomial theorem.

\medskip

Observe that
$$\textsc{coeff}_{[k]}(\psi(z)^n)=\textsc{coeff}_{[k]}\Big(\sum_{j=0}^k b_j z^j\Big)^n\,.$$
The multinomial theorem gives then that
$$\textsc{coeff}_{[k]}(\psi(z)^n)=\sum \frac{n!}{j_0!\dots j_k!} b_0^{j_0} \dots b_k^{j_k}\, ,$$
where the sum extends to all $k{+}1$-tuples $(j_0,\ldots, j_k)$ of nonnegative integers  such that
$$\begin{aligned}j_0+j_1+\dots+j_k&=n,\\
	j_1+2j_2+\dots+k j_k&=k.\end{aligned}$$
For such a tuple, we have $n-k\le j_0\le n$ and $j_i\le k$, for $1\le i \le k$.
We write $j_0=n-l$, with $0\le l\le k$ and classify by $l$:
$$\textsc{coeff}_{[k]}(\psi(z)^n)=\sum_{l=0}^k \binom{n}{l}b_0^{n{-}l}\, \underset{\triangleq C_l}{\underbrace{\sum \begin{pmatrix}j_1+\dots+j_k\\j_1,\dots, j_k
		\end{pmatrix} b_1^{j_1}\dots b_k^{j_k}}}\,.$$
For each integer $l, 0 \le l \le k$, the interior sum $C_l$ extends to all $k$-tuples with $j_1+\dots+j_k=l$ and $j_1+2j_2+\dots+kj_k=k$. Observe that $C_l$ does not depend upon $n$.
Therefore
$$ \textsc{coeff}_{[k]}(\psi(z)^n)=\sum_{l=0}^k \binom{n}{l}b_0^{n{-}l} C_l,$$
is a polynomial in $n$ whose degree $\gamma$ is given by the largest $l\le k$ such that $C_l \neq 0$, and
$$ \textsc{coeff}_{[k]}(\psi(z)^n)\sim \frac{1}{\gamma!} b_0^{n{-}\gamma} C_\gamma\, n^\gamma\, , \quad \mbox{as $n \to \infty$}\,.$$

\

\noindent $\bullet$  \quad If $b_1\neq 0$,  then  $C_k\neq 0$, and $\gamma=k$. In fact, the sum $C_k$ contains just one summand corresponding to $j_1=k$ and $j_2=\ldots=j_k=0$, and thus $C_k=b_1^k$. We have in this case that
$$\textsc{coeff}_{[k]}(\psi(z)^n)\sim \frac{1}{k!} b_0^{n{-}k} b_1^k\, n^k\sim \binom{n}{k}b_0^{n{-}k} b_1^k\, , \quad \mbox{as $n \to \infty$}\,.$$
In other terms and asymptotically speaking, the  $k$-th coefficient of $\psi(z)^n$ behaves as the $k$-th coefficient of $(b_0+b_1z)^n$, as $n \to \infty$. Compare with the formula \eqref{eq:k/n to infty and lambda less 1/2}, where $k \to \infty$ with $n$, but slowly.

\

\noindent $\bullet$  \quad If $b_1=0$, then the nonzero summands of $C_l$ must have $j_1=0$ and they must satisfy $j_2+\dots+j_k=l$ and $2j_2+\dots+k j_k=k$. Thus, if $C_l$ is not zero, then $2l \le k$, and so, $\gamma \le k/2$.

Assume further that $b_2 \neq 0$. We distinguish now between  $k$ even and $k$ odd.

If $k=2q$, then  $C_q$ has only one nonzero summand, the one corresponding to $j_2=q$. Thus $C_q=b_2^q$,  $\gamma=q=k/2$, and
$$\textsc{coeff}_{[k]}(\psi(z)^n)\sim \frac{1}{(k/2)!} b_0^{n{-}k/2} b_2^{k/2}\, n^{k/2}\, , \quad \mbox{as $n \to \infty$}\,.$$

If $k=2q+1$, then $\gamma \le q$. The only nonzero summand of $C_q$ has $j_2=q-1, j_3=1$. Thus $C_q=q b_2^{q-1}b_3$.

If $b_3 \neq 0$, then $C_q\neq 0$, $\gamma=q$ and
$$\textsc{coeff}_{[k]}(\psi(z)^n)\sim \frac{1}{((k{-}3)/2){\mbox{\large{!}}}}\,  b_0^{n-(k-1)/2} b_2^{(k{-}3)/2} b_3 \, n^{(k{-}1)/2}\, , \quad \mbox{as $n \to \infty$}\,.$$

If $b_3=0$, then $C_q=0$ and actually the degree $\gamma$ is at most $k/3$.

\

\noindent  $\bullet$ \quad In general, if $b_1=\ldots=b_{m{-}1}=0$, and $b_m\neq 0$, then for each $h, 0 \le h \le m{-}1$, the coefficients of index $k\equiv h \mod(m)$, satisfy an asymptotic formula
$$  \textsc{coeff}_{[k]}(\psi(z)^n)\sim \Omega_{k,h} n^{\gamma_{k,h}}\, , \quad \mbox{as $n \to \infty$}\,,$$
where $\gamma_{k,h}\le (k-h)/m$ and $\Omega_{k,h}>0$.

For $h=0$, i.e., for those $k$ which are multiples of $m$, we have $\gamma_{k,0}=k/m$
and, actually, that
$$  \textsc{coeff}_{[k]}(\psi(z)^n)\sim \frac{1}{(k/m)!} b_0^{n{-}k/m} b_m^{k/m}\, n^{k/m}\, , \quad \mbox{as $n \to \infty$}\,.$$

In particular, if $Q=Q_\psi >1$ and $k$ is a multiple of $Q$, then
$$ \textsc{coeff}_{[k]}(\psi(z)^n)\sim \frac{1}{(k/Q)!} b_0^{n-k/Q} b_Q^{k/Q} \, n^{k/Q}\, , \quad \mbox{as $n \to \infty$}\,.$$
Observe the $b_Q>0$.
If $Q$ is not a divisor of $k$, then, of course,  $\textsc{coeff}_{[k]}(\psi(z)^n)=0$. for every $n \ge 1$.

\section{Power $n$ is little \lq o\rq\, of index $k$}\label{section:n=o(k)}

To obtain a large powers asymptotic formula in this case where $k/n \to \infty$, as $n \to \infty$, we require more from $\psi$, namely, we will asume that $\psi$ is uniformly Gaussian. In an appendix of this paper, we describe at some length these so called uniformly Gaussian power series.

Since uniformly Gaussian power series are strongly Gaussian, Hayman's asymptotic formula \eqref{eq:asymptotic formula coeffs} is valid for $\psi$ and thus $Q_\psi=1$.

The assumption of uniform Gaussianity  on $\psi$ is to be compared with the assumptions  of Gardy in \cite[Theorems 5 and 6]{Gardy}, see also \cite[Section 6]{Gardy}.

The exponential generating function of set partitions, $e^{e^z-1}$, and the ordinary generating function of partitions of integers, $\prod_{j=1}^\infty 1/(1-z^j)$ are examples of uniformly Gaussian power series.

\

Since $\psi$ is uniformly Gaussian, we have that $M_\psi=+\infty$, and thus we may define (uniquely) $\tau_n\in (0,R)$ as being such that $m_\psi(\tau_n)=k/n$. Observe that $\tau_n \uparrow R$, as $n \to \infty$.

Now, formula \eqref{eq:formula maestra para grandes potencias} of Lemma \ref{lemma:formula maestra para grandes potencias}, with $t=\tau_n$, gives us that
$$
\textsc{coeff}_{[k]}(\psi(z)^n)=
\frac{1}{2\pi} \frac{\psi^n(\tau_n)}{\tau_n^k}\frac{1}{\sqrt{n}}\frac{1}{\sigma_\psi(\tau_n)}\int\limits_{|\theta|\le \pi \sigma_\psi(\tau_n)  \sqrt{n}} \E\big(e^{\imath \theta\breve{Y}_{\tau_n}/\sqrt{n}}\big)^n d\theta\, .
$$
Since $\tau_n \to R$, uniform Gaussianity of $\psi$ implies that
$$
\lim_{n \to \infty} \int\limits_{|\theta|\le \pi \sigma_\psi(\tau_n)  \sqrt{n}} \E\big(e^{\imath \theta\breve{Y}_{\tau_n}/\sqrt{n}}\big)^n d\theta=\sqrt{2\pi}\,,$$
and we conclude that

\begin{theo} If $\psi$ is uniformly Gaussian, then
	$$
	\textsc{coeff}_{[k]}(\psi(z)^n)\sim
	\frac{1}{\sqrt{2\pi}} \frac{\psi^n(\tau_n)}{\tau_n^k}\frac{1}{\sqrt{n}}\frac{1}{\sigma_\psi(\tau_n)}\, , \quad \mbox{as $n \to \infty$}\,,$$
	where $k/n\to \infty$ as $n \to \infty$ and $\tau_n$ is such that $m_\psi(\tau_n)=k/n$.
\end{theo}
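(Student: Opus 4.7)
The plan is to apply the master formula \eqref{eq:formula maestra para grandes potencias} of Lemma \ref{lemma:formula maestra para grandes potencias} with the tailored radius $t=\tau_n$, and then show that the resulting integral tends to $\sqrt{2\pi}$ using the uniform Gaussianity hypothesis. First I would justify that $\tau_n$ is well defined for every $n$ large enough: since $\psi$ is uniformly Gaussian it is in particular strongly Gaussian, so Lemma \ref{lema:gaussian Mf inf} gives $M_\psi=+\infty$, and the mean function $m_\psi\colon[0,R)\to[0,+\infty)$ is a bijection. Thus $\tau_n\in(0,R)$ is uniquely determined by $m_\psi(\tau_n)=k/n$, and since $k/n\to+\infty$ we have $\tau_n\uparrow R$ as $n\to\infty$. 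Moreover, uniform Gaussianity implies $Q_\psi=1$, so $\breve{Y}_t$ is a genuine lattice variable of gauge $1/\sigma(t)$ for each $t$, and no scaling complication arises.

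Next I would plug $t=\tau_n$ into \eqref{eq:formula maestra para grandes potencias}. The choice of $\tau_n$ makes $nm(\tau_n)-k=0$, so the exponential factor inside the integral disappears, and the identity reduces to
\begin{equation*}
\textsc{coeff}_{[k]}(\psi(z)^n)=\frac{1}{2\pi}\,\frac{\psi^n(\tau_n)}{\tau_n^{k}}\,\frac{1}{\sqrt{n}\,\sigma(\tau_n)}\,I_n,\qquad I_n\triangleq\int_{|\theta|\le\pi\sigma(\tau_n)\sqrt{n}}\mathbf{E}\bigl(e^{\imath\theta\breve{Y}_{\tau_n}/\sqrt{n}}\bigr)^{\!n}\,d\theta.
\end{equation*}
Thus the statement reduces to the single limit $I_n\to\sqrt{2\pi}$ as $n\to\infty$. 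The triangle inequality gives
\begin{equation*}
\bigl|I_n-\sqrt{2\pi}\bigr|\le\int_{|\theta|\le\pi\sigma(\tau_n)\sqrt{n}}\Bigl|\mathbf{E}\bigl(e^{\imath\theta\breve{Y}_{\tau_n}/\sqrt{n}}\bigr)^{\!n}-e^{-\theta^{2}/2}\Bigr|\,d\theta+\int_{|\theta|>\pi\sigma(\tau_n)\sqrt{n}}e^{-\theta^{2}/2}\,d\theta.
\end{equation*}
Since $\sigma(\tau_n)\sqrt{n}\to+\infty$ (uniform Gaussianity forces $\sigma(t)\to\infty$ as $t\uparrow R$, and already $\sqrt{n}\to\infty$), the tail Gaussian integral on the right tends to zero.

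The crux is therefore controlling the first summand uniformly along the sequence $\tau_n\uparrow R$. This is precisely what the uniform Gaussianity of $\psi$ provides: by its definition (as spelled out in the appendix referenced in the introduction), one has
\begin{equation*}
\lim_{n\to\infty}\int_{|\theta|\le\pi\sigma(t_n)\sqrt{n}}\Bigl|\mathbf{E}\bigl(e^{\imath\theta\breve{Y}_{t_n}/\sqrt{n}}\bigr)^{\!n}-e^{-\theta^{2}/2}\Bigr|\,d\theta=0
\end{equation*}
for every sequence $(t_n)$ in $[0,R)$ with $t_n\uparrow R$, which is the uniform-in-$t$ analogue of Theorem \ref{teor:integral de diferencia caracteristicas}. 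Applying this with $t_n=\tau_n$ yields $I_n\to\sqrt{2\pi}$, and substituting back into the displayed equality for $\textsc{coeff}_{[k]}(\psi^n)$ finishes the proof. The main obstacle is really just invoking the correct uniform Gaussianity statement: once the cut between major and minor arcs is available uniformly along $\tau_n$, the argument is parallel to the ones carried out in Sections \ref{section:k comp n} and \ref{section:k=o(n)} with Theorems \ref{teor:integral de diferencia caracteristicas familias continuas} and \ref{teor:integral de diferencia caracteristicas}, respectively. The hypothesis $k/n\to\infty$ enters only through pushing $\tau_n$ all the way to $R$, where the continuous-family local central limit theorems of Section \ref{section:local central limit continuous families} no longer apply and must be replaced by the uniform estimate built into the definition of uniform Gaussianity.
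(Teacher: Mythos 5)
Your proposal is correct and follows essentially the same route as the paper: apply the master identity \eqref{eq:formula maestra para grandes potencias} at the tailored radius $\tau_n$ (so the oscillatory factor vanishes), note that $M_\psi=+\infty$ and $\tau_n\uparrow R$, and then invoke the joint limit $[n\to\infty\,\vee\,t\uparrow R]$ in the definition of uniform Gaussianity to get $I_n\to\sqrt{2\pi}$. The only difference is that you spell out the triangle-inequality split and the Gaussian tail estimate, which the paper leaves implicit.
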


\section{Coefficients of $h(z)\psi(z)^n$}\label{section:coef h psi^n}

Next we consider  asymptotics of the coefficients of $h(z)\psi(z)^n$, as $n \to \infty$, where both $h$ and $\psi$ are in $\K$. This is to be used in Section \ref{section:prob gen functions} which deals  with Lagrangian distributions and Galton-Watson processes: the power series $h$ would codify the initial offspring distribution while the power series $\psi$ would codify the probability distributions subsequent offspring distributions.

\smallskip

We assume here from the start  that $Q_\psi=1$.

Let $h(z) = \sum_{j=0}^{\infty}c_j z^j$ be a power series in $\K$. We assume throughout that the radius of convergence of $h$ is at least $R_\psi$, the radius of convergence of $\psi$.

We treat jointly the cases in which $k\asymp n$ and $k=o(n)$, with $k \to \infty$, by adjusting  the  arguments in the previous sections (whose notations we will use liberally) in which $h \equiv 1$.

Assume the hypothesis of Theorem \ref{teor:asymptotic powers with k comp n}, when $k \asymp n$, and of Theorem \ref{teor:asymptotic powers with k=o(n)}, when $k=o(n)$ and $k \to \infty$.

The $k$-th coefficient of $h(z)\psi(z)^n$ is given, see \eqref{eq:formula maestra para grandes potencias funcion h}, by
$$ \frac{1}{2\pi} \frac{\psi^n(\tau_n)}{\tau_n^k}\frac{1}{\sqrt{n}}\frac{1}{\sigma_\psi(\tau_n)}\int\limits_{|\theta|\le \pi \sigma_\psi(\tau_n)  \sqrt{n}} h(\tau_n e^{\imath \theta /(\sigma_\psi(\tau_n)\sqrt{n})})\E\big(e^{\imath \theta\breve{Y}_{\tau_n}/\sqrt{n}}\big)^n d\theta\, ,$$
where we are taking $\tau_n$ such that $m_\psi(\tau_n) = k/n$.

Denote by $I_n$ the integral on the right hand-side of the previous expression.

\smallskip

For $0 \le s\le r<R$ and $\phi \in \R$ we have
$$\big|h(se^{\imath \phi})-h(s)\big|\le \max\limits_{|z|\le r}|h^\prime(z)|\, s\,  |\phi|\, .$$
For $k \asymp n$, we have $\tau_n\le m_\psi^{-1}(B)<R$, and for $k=o(n)$ we have $\tau_n \to 0$, so, in both cases, there exists a constant $K>0$ such that
$$
(\nabla) \qquad\Big|h(\tau_n e^{\imath \theta/(\sigma_\psi(\tau_n)\sqrt{n})})-h(\tau_n)\Big| \le K \frac{|\theta|}{\sigma_\psi(\tau_n) \sqrt{n}}\, .$$
Denote by $\mathcal{D}_n$ the interval $\{\theta \in \R: |\theta|\le \pi \sigma_\psi(\tau_n) \sqrt{n}\}$.

Using the bound $(\nabla)$ and writing
$$\E\big(e^{\imath \theta\breve{Y}_t/\sqrt{n}}\big)^n=\Big(\E\big(e^{\imath \theta\breve{Y}_t/\sqrt{n}}\big)^n-e^{-\theta^2/2}\Big)+e^{-\theta^2/2}\, ,$$
we conclude that
$$\begin{aligned}I_n&=h(\tau_n)\int\limits_{\mathcal{D}_n} e^{-\theta^2/2}d\theta
	+h(\tau_n)\int\limits_{\mathcal{D}_n} \Big(\E\big(e^{\imath \theta\breve{Y}_{\tau_n}/\sqrt{n}}\big)^n-e^{-\theta^2/2}\Big)d\theta\\
	&+O\Big(\int\limits_{\mathcal{D}_n} \Big|\E\big(e^{\imath \theta\breve{Y}_{\tau_n}/\sqrt{n}}\big)^n-e^{-\theta^2/2}\Big| \frac{|\theta|}{\sigma_\psi(\tau_n) \sqrt{n}}d\theta\Big)
	+O\Big(\int\limits_{\mathcal{D}_n} e^{-\theta^2/2} \frac{|\theta|}{\sigma_\psi(\tau_n) \sqrt{n}}d\theta\Big)\,.
\end{aligned}
$$

For  $\theta\in\mathcal{D}_n$, we have that $\dfrac{|\theta|}{\sigma_\psi(\tau_n)\sqrt{n}}\le \pi$, and thus we see that the third term in the sum above tends to 0, by virtue of \eqref{eq:limite fuerte cuando k como n} (if $k\asymp n$) or \eqref{eq:limite fuerte cuando k=o(n)} (if $k=o(n)$).

The second term in the sum tends to 0, since $h(\tau_n)$ is bounded and the integral converges to 0, by  \eqref{eq:limite fuerte cuando k como n} and  \eqref{eq:limite fuerte cuando k=o(n)}, respectively.

The fourth term in the sum tends to 0 since $\sigma_\psi(\tau_n)\sqrt{n}\asymp \sqrt{k}$ and $\int\limits_\R e^{-\theta^2/2} |\theta| d\theta =2$.

The first term in the sum is $h(\tau_n)\sqrt{2\pi}+o(1)$, since $h(\tau_n)$ is bounded, and so
$$I_n=h(\tau_n)\sqrt{2\pi}+o(1)\, , \quad \mbox{as $n \to \infty$}\,,$$
but, using that $h(\tau_n)$ is bounded from below by $h(0)$ and recalling that $h \in \mathcal{K}$, we conclude that
$$I_n=h(\tau_n)\sqrt{2\pi}(1+o(1))\, , \quad \mbox{as $n \to \infty$}\,.$$

We summarize the discussion above as follows.

\begin{theor} With the notations above,
\begin{itemize}\item
For the case $k \asymp n$, one has that
$$
\textsc{coeff}_{[k]}(h(z)\psi(z)^n)\sim
\frac{1}{\sqrt{2\pi}} \frac{h(\tau_n) \psi^n(\tau_n)}{\tau_n^k}\frac{1}{\sqrt{n}}\frac{1}{\sigma_\psi(\tau_n)}\, , \quad \mbox{as $n \to \infty$}\, .$$

\item For the case $k=o(n)$ and $k \to \infty$, one has  that
$$
\textsc{coeff}_{[k]}(h(z)\psi(z)^n)
\sim \frac{h(0)}{\sqrt{2\pi}} \frac{\psi^n(\tau_n)}{\tau_n^k} \frac{1}{\sqrt{k}}
\, , \quad \mbox{as $n \to \infty$}\, .$$
\end{itemize}
\end{theor}
Observe that, in the case $k=o(n)$ and $k \to \infty$, we have that $h(\tau_n)\to h(0)$ as $n \to \infty$.

\

If more information is available on how $k/n$ tends to 0, as discussed in Section \ref{section:with more information on k/n}, we could apply the argument and the conclusions obtained there to refine the above asymptotic formula.

\medskip

Suppose that $k/n \rightarrow L > 0$, as $n \rightarrow \infty$ and that equation \eqref{eq:lim L and omega} is satisfied. Assume that $L < M_{\psi}$. Then for $\tau = m_\psi^{-1}(L)$, we deduce, using the Hayman's formula \eqref{eq:formula maestra para grandes potencias funcion h} and arguing as in the case $h \equiv 1$, that
\begin{equation}
	\label{eq:formula_limit}
	\textsc{coeff}_{[k]}(h(z)\psi(z)^n)\sim
	\frac{1}{\sqrt{2\pi}} \frac{h(\tau) \psi^n(\tau)}{\tau^k}\frac{1}{\sqrt{n}}\frac{1}{\sigma_\psi(\tau)}\, , \quad \mbox{as $n \to \infty$}\, .
\end{equation}

Recall that the power series $h$ has radius of convergence $R_\psi$ and that $\tau \in (0,R_\psi)$.

\

To conclude, we consider the case in which $k$ is fixed and $n\to \infty$. Denote $Q=Q_{\psi}= \{n \geq 1 : b_n >0\}$.
Then
$$\begin{aligned}\textsc{coeff}_{[k]}(h(z)\psi(z)^n)&=\sum_{j=0}^k c_j \textsc{coeff}_{[k]}(z^j\psi(z)^n)\\
	&=\sum_{j=0}^k c_j \textsc{coeff}_{[k{-}j]}(\psi(z)^n).\\
\end{aligned}$$
The only $(k{-}j)$-th coefficients of $\psi(z)^n$ which are nonzero are the coefficients with indices $j \equiv k, \mod Q$. Appealing to the case $h\equiv 1$,
we have that
$$\begin{aligned}
	\textsc{coeff}_{[k]}(h(z)\psi(z)^n) & = \sum_{\substack{0 \leq j \leq k \\ j \equiv k \,, \text{mod} Q}}c_{j} \textsc{coeff}_{[k{-}j]}(\psi(z)^n)\\
	&= \sum_{\substack{0 \leq j \leq k \\ j \equiv k \,, \text{mod} Q}} c_{j} \frac{1}{((k{{-}}j)/Q)!} b_0^{n{-}(k{-}j)/Q}\, b_Q^{(k{-}j)/Q}\, n^{(k{-}j)/Q}\, (1+o(1))\,.
\end{aligned}$$

Denoting $j_0 = \displaystyle\min\{0 \leq j \leq k : j \equiv k, \mod Q \text{ and } c_j \neq 0\}$, we have that
$$\begin{aligned}
	\textsc{coeff}_{[k]}(h(z)\psi(z)^n) & \sim c_{j_0} \frac{1}{((k{-}j_0)/Q)!} b_0^{n{-}(k{-}j_0)/Q}\, b_Q^{(k{-}j_0)/Q}\, n^{(k{-}j_0)/Q},  \quad \mbox{as $n \to \infty$}\, . \\
\end{aligned}$$

In  particular, if $h(0)=c_0\neq0$ and if  $Q$ is a divisor of $k$,  then
$$\begin{aligned}
	\textsc{coeff}_{[k]}(h(z)\psi(z)^n) & \sim c_{0} \frac{1}{(k/Q)!} \, b_0^{n{-}k/Q}\, b_Q^{k/Q}\, n^{k/Q}, \quad \mbox{as $n \to \infty$}\, . \\
\end{aligned}$$
Observe that in this case, $j_0=0$.

\section{Coefficients of solutions of Lagrange's equation}\label{section:meir-moon}

Next we shall apply the asymptotic results about large powers of Section \ref{section:large powers} to obtain asymptotic formulae for the coefficients of solutions of Lagrange's equation when the data $\psi$ of the equation belongs to $\mathcal{K}$.

\subsection{Lagrange's equation}

We start with a function $\psi(z)$ which is holomorphic in a neighborhood of $z = 0$, has  radius of convergence $R > 0$ and is such that $\psi(0) \neq 0$.
Consider \textit{Lagrange's equation}:
$$(\dag)\qquad g(w)=w \psi(g(w))\, .$$

The \textit{solution} $g(w)$ of Lagrange's equation with \textit{data} $\psi(z)$ is a holomorphic function $g(w)=\sum_{n=1}^\infty A_n w^n$ which satisfies
$(\dag)$ in a neighborhood of $w=0$.

Assume that the data $\psi(z)$ of Lagrange's equation has power series expansion $$\psi(z)=\sum_{n=0}^\infty b_n z^n, \quad \mbox{for all $z \in \D(0,R)$}.$$

The holomorphic function $g(w)$, solution of Lagrange's equation with data $\psi(z)$, is unique. In fact, the coefficients $A_n$ of the Taylor expansion of $g(w)$ around $w = 0$ are given by \textit{Lagrange's inversion formula}:
$$A_n=\frac{1}{n}\textsc{coeff}_{[n{-}1]}(\psi(z)^n)\, , \quad \mbox{for all $n \ge 1$}\, .$$
For $n=0$, we have $A_0=g(0)=0$.

This formula is exact for each coefficient $A_n$ of $g$ such that $n \geq 1$.

In a more general setting, consider the coefficients of $H(g(z))$, where $g$ is the solution of Lagrange's equation with data $\psi$ and $H$ is a holomorphic function with nonnegative coefficients around $z = 0$, then the \textit{extended Lagrange's inversion formula} gives
\begin{align} \label{eq: Lagrange_general}
	\textsc{coeff}_{[n]}(H(g(z))) = \frac{1}{n}\textsc{coeff}_{[n{-}1]}(H^{\prime}(z)\psi(z)^n), \quad \mbox{for all $n \ge 1$}\, ,
\end{align}
and $\textsc{coeff}_{[0]}(H(g(z))) = H(0)$.

A particular instance of this formula, that will be of interest later on,  is given by the choice $H(z) = z^q$ for an integer $q \geq 1$. In this case we have
\begin{align}
	\textsc{coeff}_{[n]}(g(z)^q) = \frac{q}{n}\textsc{coeff}_{[n{-}q]}(\psi(z)^n), \quad \mbox{for all $n \ge 1$}\, ,
\end{align}
and $\textsc{coeff}_{[0]}(g(z)^q) = H(0) = 0.$

\smallskip

If $Q_\psi>1$, then the only nonzero coefficients $b_n$ are those where the index $n$ is a multiple of $Q_\psi$.Therefore, for the solution $g(w)$ of Lagrange's equation with data $\psi$, the only nonzero coefficients $A_n$ are those where the index $n$ verifies that $n-1$ is a multiple of $Q_{\psi}$.

\smallskip

If $\psi(0)=0$, then the only solution of Lagrange's equation is $g \equiv 0$, but, recall, nonetheless that  $\psi \in \K$ implies that $\psi(0) >0$.

\subsection{The Otter-Meir-Moon theorem and some extensions}

The Otter-Meir-Moon Theorem, Theorem \ref{teor:Otter-Meir-Moon} below,  comes from  \cite[Theorem 4]{Otter} and \cite[Theorem 3.1]{MeirMoonOld}. It  gives an asymptotic formula for the coefficients $A_n$ of $g$, when $n{-}1$ is a multiple of $Q_\psi$ and $n\to \infty$, using minimal (but crucial) information about the function $\psi$.

We distinguish three cases, according as  $M_\psi$ is $>1, =1$ or $<1$.

\noindent $\blacklozenge$   $M_\psi>1$. This is the original assumption of both Otter and Meir-Moon; it means that there is a unique $\tau\in (0,R)$ such that $m_\psi(\tau)=1$.

\begin{theorem}[Otter-Meir-Moon theorem]\label{teor:Otter-Meir-Moon} Let $\psi(z)$ be a power series in $\K$ with radius of convergence $R > 0$ and let $\psi(z)=\sum_{n=0}^\infty b_n z^n$ be its power series expansion.
	
	Assume that $M_\psi >1$ and let  $\tau \in (0,R)$ be given by $m_\psi(\tau)=1$.
	
	Then the coefficients $A_n$ of the solution $g(w)$ of Lagrange's equation verify that
	\begin{itemize}
		\item if $n\not\equiv 1 \mod{Q_\psi}$, then  $A_n=0$,
		\item for the indices $n$ such that $n\equiv 1 \mod{Q_\psi}$ we have the asymptotic formula
		\begin{equation}\label{eq:asintotico meir-moon}A_n \sim \frac{Q_\psi}{\sqrt{2\pi}}\, \frac{\tau}{\sigma_\psi(\tau)}  \frac{1}{n^{3/2}} \, \Big(\frac{\psi(\tau)}{\tau}\Big)^n\, , \quad \mbox{as $n \to \infty$}\,.\end{equation}
	\end{itemize}
\end{theorem}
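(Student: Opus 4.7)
The plan is to derive the result as a direct corollary of Lagrange's inversion formula combined with Theorem \ref{teor:large powers k/n to L} on coefficients of large powers when $k/n$ tends to a fixed limit. The starting point is the identity
$$A_n = \frac{1}{n}\,\textsc{coeff}_{[n-1]}(\psi(z)^n), \quad n \ge 1,$$
so the entire problem reduces to the asymptotics of $\textsc{coeff}_{[n-1]}(\psi^n)$ as $n \to \infty$.

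For the vanishing part, I would invoke the scaling of Section \ref{section:scale}: write $\psi(z) = \phi(z^{Q_\psi})$ with $\phi \in \mathcal{K}$ and $Q_\phi = 1$, so that $\psi(z)^n = \phi(z^{Q_\psi})^n$ has nonzero coefficients only at indices divisible by $Q_\psi$. If $n \not\equiv 1 \pmod{Q_\psi}$, the index $k = n-1$ fails to be a multiple of $Q_\psi$, hence $\textsc{coeff}_{[n-1]}(\psi^n) = 0$ and $A_n = 0$.

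For the asymptotic part, I would apply Theorem \ref{teor:large powers k/n to L} with $k = n-1$ and $L = 1$. The hypothesis $M_\psi > 1$ guarantees simultaneously that $L < M_\psi$ and that the saddle $\tau \in (0,R)$ with $m_\psi(\tau) = 1$ is well defined. Moreover
$$\frac{nL - k}{\sqrt{n}} \,=\, \frac{1}{\sqrt{n}} \,\longrightarrow\, 0,$$
so the parameter $\omega$ in that theorem is $0$ and the Gaussian factor $e^{-\omega^2/(2\sigma^2_\psi(\tau))}$ collapses to $1$. For $n \equiv 1 \pmod{Q_\psi}$ the theorem then delivers
$$\textsc{coeff}_{[n-1]}(\psi(z)^n) \,\sim\, \frac{Q_\psi}{\sqrt{2\pi}\,\sigma_\psi(\tau)} \cdot \frac{1}{\sqrt{n}} \cdot \frac{\psi(\tau)^n}{\tau^{\,n-1}}, \quad n \to \infty.$$
Dividing by $n$ and rewriting $\psi(\tau)^n/\tau^{n-1} = \tau\,(\psi(\tau)/\tau)^n$ yields precisely the stated formula \eqref{eq:asintotico meir-moon}.

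I do not expect a substantial obstacle: all the analytic work has already been invested in Theorem \ref{teor:large powers k/n to L}, which in turn rests on Hayman's identity \eqref{eq:formula maestra para grandes potencias} and the integral local central limit theorem for continuous families, Theorem \ref{teor:integral de diferencia caracteristicas familias continuas}. The only point requiring care is bookkeeping: verifying that the deterministic offset $k = n-1$ is $o(\sqrt{n})$ from $n L = n$ (so that $\omega = 0$), and correctly handling the congruence class of $k$ modulo $Q_\psi$ that produces both the vanishing of half of the $A_n$ and the factor $Q_\psi$ in front of the asymptotic.
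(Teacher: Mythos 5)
Your proposal is correct and follows exactly the paper's own route: the paper deduces Theorem~\ref{teor:Otter-Meir-Moon} from Lagrange's inversion formula together with Theorem~\ref{teor:large powers k/n to L} applied with $k=n-1$, $L=1$ and $\omega=0$, and handles the vanishing coefficients via the congruence $n\equiv 1 \bmod Q_\psi$ just as you do. The bookkeeping ($\frac{1}{n}\cdot\frac{1}{\sqrt n}=n^{-3/2}$ and $\psi(\tau)^n/\tau^{n-1}=\tau(\psi(\tau)/\tau)^n$) is carried out correctly.
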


\

Theorem \ref{teor:Otter-Meir-Moon} of Otter and Meir-Moon follows readily from Theorem \ref{teor:large powers k/n to L} with $L=1$ and $\omega=0$, since $k=n-1$.

Given that
$$\sigma^2_\psi(t)=m_\psi(t)(1-m_\psi(t))+t^2\frac{\psi^{\prime\prime}(t)}{\psi(t)}\, , \quad \mbox{for all $t \in[0,R)$}\, ,$$ we have
\begin{align}\label{eq: varianza_tau}
	\frac{\psi^{\prime\prime}(\tau)}{\psi(\tau)}=\frac{\sigma^2_\psi(\tau)}{\tau^2}\, .
\end{align}
We thus may rewrite the conclusion of Theorem \ref{teor:Otter-Meir-Moon}  (in Laplace's method or saddle point approximation style) as
$$
A_n\sim Q_\psi\, \sqrt{\frac{\psi(\tau)}{2\pi \psi^{\prime\prime}(\tau)}}\,  \frac{1}{n^{3/2}} \Big(\frac{\psi(\tau)}{\tau}\Big)^n\, , \quad \mbox{when $n{-}1$ is a multiple of $Q_\psi $ and $n \to \infty$}\,.
$$

\

\noindent $\blacklozenge$  $M_\psi=1$ (and $R<+\infty$). For the case $M_\psi=1$, there is an Otter-Meir-Moon like theorem under  a certain non-degeneracy condition on $\psi$. Recall the notations of Section \ref{section:range of mean}.

\begin{theor}\label{teor:Meir-Moon Mpsi=1} Let $\psi(z)$ be a power series in $\K$ with radius of convergence $R > 0$ and let $\psi(z)=\sum_{n=0}^\infty b_n z^n$ be its power series expansion.
	
	Assume  that $M_\psi=1$ and that $R<\infty$, and besides that $\sum_{n=0}^\infty n^2 b_n R^n <+\infty$ or equivalently $\psi^{\prime\prime}(R)<+\infty$.

	Then the coefficients $A_n$ of the solution $g(w)$ of Lagrange's equation verify that
	\begin{itemize}
		\item if $n\not\equiv 1 \mod{Q_\psi}$, then  $A_n=0$,
		\item for the indices $n$ such that $n\equiv 1 \mod{Q_\psi}$ we have the asymptotic formula
		\begin{equation}\label{eq:asintotico meir-moon tau=R}A_n \sim \frac{Q_\psi}{\sqrt{2\pi}}\, \frac{R}{\sigma_\psi(R)}  \frac{1}{n^{3/2}} \, \Big(\frac{\psi(R)}{R}\Big)^n\, , \quad \mbox{as $n \to \infty$}\,.\end{equation}
	\end{itemize}
\end{theor}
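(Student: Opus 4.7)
The plan is to deduce the statement directly from Lagrange's inversion formula combined with Theorem \ref{teor:large powers k/n to L=M_psi}, following exactly the template used for the original Otter--Meir--Moon Theorem \ref{teor:Otter-Meir-Moon}, only substituting the boundary-case large-powers theorem for the interior-case one.

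First I would dispose of the case $n \not\equiv 1 \pmod{Q_\psi}$. Writing $\psi(z) = \phi(z^{Q_\psi})$ with $\phi \in \K$, the nonzero coefficients of $\psi^n$ are supported on indices which are multiples of $Q_\psi$; since $n-1$ is not such a multiple, $\textsc{coeff}_{[n-1]}(\psi^n) = 0$, and Lagrange's formula $A_n = \frac{1}{n}\textsc{coeff}_{[n-1]}(\psi^n)$ gives $A_n = 0$.

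For $n \equiv 1 \pmod{Q_\psi}$, I would set $k = n-1$ (a multiple of $Q_\psi$) and take $L = 1$. The hypotheses of Theorem \ref{teor:large powers k/n to L=M_psi} are all in place: $M_\psi = 1 = L$ and $R < \infty$ by assumption, while the equivalent conditions $\sum_{n=0}^\infty n^2 b_n R^n < \infty$ and $\psi^{\prime\prime}(R) < \infty$ are exactly the non-degeneracy assumption in the statement. Regarding the governing parameter,
\begin{equation*}
\lim_{n \to \infty} \frac{nL-k}{\sqrt{n}} = \lim_{n \to \infty} \frac{1}{\sqrt{n}} = 0,
\end{equation*}
so $\omega = 0$. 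Theorem \ref{teor:large powers k/n to L=M_psi} then yields
\begin{equation*}
\textsc{coeff}_{[n-1]}(\psi^n) \sim \frac{Q_\psi}{\sqrt{2\pi}\,\sigma_\psi(R)}\, \frac{1}{\sqrt{n}}\, \frac{\psi(R)^n}{R^{n-1}}, \quad \mbox{as } n \to \infty,
\end{equation*}
and dividing by $n$ via Lagrange's inversion formula gives precisely \eqref{eq:asintotico meir-moon tau=R}.

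No step in this chain poses a genuine obstacle once Theorem \ref{teor:large powers k/n to L=M_psi} is in hand; the only substantive point is the role played by $\psi^{\prime\prime}(R) < \infty$, which ensures $\sigma_\psi(R) < \infty$ so that the extended Khinchin family $(Y_t)_{t \in [0,R]}$ can be used in the Hayman-type identity \eqref{eq:formula maestra para grandes potencias Mf finito} that underpins the boundary large-powers asymptotic. The main conceptual observation is simply that the boundary case $L = M_\psi$, which prevents selecting a saddle radius inside $(0,R)$ as in the original Otter--Meir--Moon proof, is handled by taking the saddle at $t = R$ itself, and this is exactly what Theorem \ref{teor:large powers k/n to L=M_psi} accomplishes.
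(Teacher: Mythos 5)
Your proposal is correct and follows exactly the paper's own route: the paper derives Theorem \ref{teor:Meir-Moon Mpsi=1} from Theorem \ref{teor:large powers k/n to L=M_psi} with $L=1$, $k=n-1$ and $\omega=0$, combined with Lagrange's inversion formula, just as you do. Your remarks on the vanishing of $A_n$ for $n\not\equiv 1 \pmod{Q_\psi}$ and on the role of $\psi^{\prime\prime}(R)<\infty$ in guaranteeing $\sigma_\psi(R)<\infty$ match the paper's discussion.
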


The power series $\psi$ extends to be continuous in $\cl(\D(0,R))$ and the Khinchin family $(Y_t)_{t \in [0,R)}$ extends (continuously in distribution) to include a variable $Y_R$ with mean 1 and variance $\sigma_{\psi}^2(R)=\lim_{t \uparrow R} \sigma_{\psi}^2(t)$ which is finite because by hypothesis   $\sum_{n=0}^\infty n^2 b_n R^n <+\infty$. See Section \ref{section:range of mean}.

Theorem \ref{teor:Meir-Moon Mpsi=1} follows readily from Theorem \ref{teor:large powers k/n to L=M_psi} with $L=1$ and $\omega=0$, since $k=n-1$.

This limit case of the Otter-Meir-Moon Theorem appears as Remark 3.7 in \cite[Chapter 3]{Drmota}; the proof suggested there, of a different nature than  the one above,   comes from the Appendix of \cite{Janson}. See also \cite[Section 2.3.1]{Minami}. In there you may find  the same asymptotic result under the assumption that $\psi$ is a probability generating function satisfying  $\psi^{(4)}(R)=\lim_{t\uparrow R} \psi^{(4)}(t)<\infty$.

\

Observe that, in particular, it follows from Theorem \ref{teor:Otter-Meir-Moon} that when $M_\psi>1$ and with $m_\psi(\tau)=1$ the radius of convergence of the solution $g$ is $\tau/\psi(\tau)$. When $M_\psi=1$ (and $R<+\infty$), the radius of convergence of $g$ is $R/\psi(R)$.

\

If $M_\psi=1$ and $R=+\infty$, then $\psi$ is a polynomial of degree 1. Thus $\psi(z)=a+bz$, with both $a,b>0$. In this case the solution $g(z)$ of Lagrange equation with data $\psi$ is simply
$$g(z)=\frac{az}{1-bz}=\sum_{n=1}^\infty (ab^{n{-}1}) z^n\,.$$
and thus
$$A_n=ab^{n{-}1}\,, \quad \mbox{para $n \ge 1$}\,.$$

\

\noindent $\blacklozenge$   $M_\psi<1$. In this case, we necessarily have that $R<\infty$, since $R=\infty$ and $M_\psi<1$ would imply that $\psi$ is a constant, which is not allowed. See Lemma \ref{lemma:char of Mf finite}.

We assume as above that $\sum_{n=0}^\infty n^2 b_n R^n <\infty$. As in the preceding  case $M_\psi=1$ above, the power series $\psi$ is continuous in $\cl(\D(0,R))$ and the Khinchin family extends to include continuously a random variable $Y_R$ with mean $M_\psi<1$ and variance $\lim_{t\uparrow R} \sigma_{\psi}^2(t)=\sigma_{\psi}^2(R)<\infty$. Assume for simplicity that $Q_\psi=1$

In this case, we do not obtain a proper  asymptotic formula, just only that
\begin{equation}\label{eq:little oh con Mpsi menor que 1}\lim_{n \to \infty} A_n \frac{R^{n-1}}{\psi^n(R)} n^{3/2}=0\,.\end{equation}

For $A_n$ we have that
$$
A_n=\frac{1}{n} \textsc{coeff}_{[n-1]}(\psi^n)=
\frac{1}{2\pi} \frac{\psi^n(R)}{R^{n-1}}\frac{1}{n^{3/2}} \frac{1}{\sigma_{\psi}(R)}\, I_n
$$
where
$$I_n\triangleq \int\limits_{|\theta|\le \pi \sigma_\psi(R) \sqrt{n}} \E\big(e^{\imath \theta \breve{Y}_R /\sqrt{n}}\big)^n e^{\imath \theta \alpha(n)} d\theta\,, \quad \mbox{ for $n \ge 1$}\,,$$
and
$$\alpha(n)=\frac{n M_\psi -(n-1)}{\sigma_\psi(R)\sqrt{n}}\,, \quad \mbox{ for $n \ge 1$}\,.$$

We shall now verify that $I_n$ tends towards 0 as $n\to \infty$. We split $I_n$ as
$$
\begin{aligned}
	I_n&=\int\limits_{|\theta|\le \pi \sigma_\psi(R) \sqrt{n}} \Big(\E\big(e^{\imath \theta \breve{Y}_R /\sqrt{n}}\big)^n -e^{-\theta^2/2}\Big) e^{\imath \theta \alpha(n)} d\theta\\
	&+\int_{\R} e^{-\theta^2/2}e^{\imath \theta \alpha(n)} d\theta\\
	&-\int\limits_{|\theta|> \pi \sigma_\psi(R) \sqrt{n}} e^{-\theta^2/2}e^{\imath \theta \alpha(n)} d\theta\,.
\end{aligned}
$$
The third summand obviously tends to 0, while the first summand tends to 0 on account of Theorem \ref{teor:integral de diferencia caracteristicas}, the integral form of the Local Central Limit Theorem. The second summand may be written as
$$
\int_{\R} e^{-\theta^2/2}e^{\imath \theta \alpha(n)} d\theta=e^{-\alpha(n)^2/2}$$
to observe that since $M_\psi<1$, we have that $\lim_{n\to \infty} \alpha(n)=-\infty$, and thus that this second summand too converges to 0, as $n \to \infty$.

Minami in \cite[Section 2.3.2]{Minami} obtains a faster  rate of convergence (higher power of $n$) in \eqref{eq:little oh con Mpsi menor que 1} under the stronger assumption that $\lim_{t\uparrow R} \psi^{(k)}(t) <\infty$ for some $k \ge 3$.

\subsection{Coefficients of powers of solutions of Lagrange's equation}

We assume $Q_\psi=1$. We are interested now, see Meir-Moon \cite{MeirMoon},  in asymptotic formulas for the coefficients of  powers of the solution of Lagrange's equation.

For $q \ge 1$ and $n \ge 1$, the $n$-th coefficient of $g(w)^q$, which we will denote by $B_{n,q}$, is given by the exact formula
$$B_{n,q}=\frac{q}{n} \textsc{coeff}_{[n-q]}( \psi(z)^n)\, .$$

\smallskip

\noindent $\bullet$ \quad For $q\ge 1$ fixed, Theorem \ref{teor:large powers k/n to L} gives, under the assumption $M_\psi>1$ and with $\tau$ given by $m_\psi(\tau)=1$, that
$$
B_{n,q}\sim  \frac{q}{\sqrt{2\pi}} \frac{\tau^q}{\sigma_\psi(\tau)} \frac{1}{n^{3/2}} \Big(\frac{\psi(\tau)}{\tau}\Big)^n\, , \quad \mbox{as $n\to \infty$ }.
$$

\noindent $\bullet$ \quad If $q$ and $n$ are related by $q=\alpha n+\beta \sqrt{n} +o(\sqrt{n})$ as $n \to \infty$, where $\alpha \in[0,1)$ and $\beta \in \R$, then for $\tau_\alpha$ given by $m_\psi(\tau_\alpha)=1-\alpha$, we obtain, analogously to \eqref{eq:formula_limit},  that
$$B_{n,q}\sim e^{-\beta^2/(2\sigma_\psi^2(\tau_\alpha))}\frac{q}{\sqrt{2\pi}} \frac{\tau_\alpha^q}{\sigma_\psi(\tau_\alpha)} \frac{1}{n^{3/2}} \Big(\frac{\psi(\tau_\alpha)}{\tau_\alpha}\Big)^n\,,\quad \mbox{as $n \to \infty$}\,.$$
Observe that
$$\sigma_\psi(\tau_\alpha)=\alpha(1-\alpha)+\tau^2_\alpha\frac{ \psi^{\prime\prime}(\tau_\alpha)}{\psi(\tau_\alpha)}\,.$$

%

\smallskip

\subsubsection{Coefficients of functions of solutions of Lagrange's equation} In a more general setting, let $H$ be a power series with nonnegative coefficients.

Assume further that the radius of convergence of $H$ is at least the radius of convergence $R$ of $\psi$ and also that $M_\psi>1$. Let $\tau$ be such that $m_\psi(\tau)=1$.

It follows from formulas \eqref{eq: Lagrange_general} and \eqref{eq:formula_limit} that the coefficients of $H(g(z))$, where $g(z)$ is the solution of Lagrange's equation with data $\psi$, satisfy:
$$
\textsc{coeff}_{[n]} (H(g(z))) \sim \frac{1}{\sqrt{2\pi}}\frac{H^\prime(\tau) \tau}{\sigma_\psi(\tau)}\frac{1}{n^{3/2}} \Big(\frac{\psi(\tau)}{\tau}\Big)^n\, , \quad \mbox{as $n \to \infty$}\,.
$$

\section{Probability generating functions}\label{section:prob gen functions}

Let $\psi$ be the probability generating function of a random variable $X$.

We assume  for convenience that $\psi(0)\neq 0$ and that $\psi^\prime(0)\neq 0$,  and further that $Q_\psi=1$,  that $M_\psi=+\infty$ and  that the radius of convergence of $\psi$ is $R>1$. This last assumption implies, in particular, that the variable $X$ has finite moments of all orders.

\smallskip

Let $F$ be the fulcrum of $\psi$, see Section \ref{section:auxiliary F}, given by $F(z)=\ln \psi(e^z)$, in a region containing $[0,R_\psi)$. Recall  that in  terms of the fulcrum $F$ we have that $m_\psi(t)=F^\prime(s)$ and $\sigma^2_\psi(t)=F^{\prime\prime}(s)$, where $s$ and $t$ are related by $e^s=t$.

The function $e^{F(z)}=\psi(e^z)=\sum_{n=0}^\infty b_n e^{n z}$,  which is holomorphic in a neighborhood of $z=0$,  is the \textit{moment generating function} of $X$.

\smallskip

If $X_1, X_2, \ldots$ are independent copies of $X$, then
$$\textsc{coeff}_{[k]}(\psi(z)^n)=\P\left(\frac{X_1+\dots+X_n}{n}=\frac{k}{n}\right)\,.$$

Let $m_\psi(\tau_n)=k/n$ and let $s_n$ be given by $e^{s_n}=\tau_n$. Observe that $\psi(\tau_n)=e^{F(s_n)}$ and that $\sigma^2_\psi(\tau_n)=F^{\prime\prime}(s_n)$.

\

In terms of the fulcrum $F$ (or the $\ln$ of the moment generating function),  we have that if $k\asymp n$, see Theorem \ref{teor:asymptotic powers with k comp n} or $k/n\to 0$, see Theorem \ref{teor:asymptotic powers with k=o(n)}, that
$$\begin{aligned}
	\P\left(\frac{X_1+\dots+X_n}{n}=\frac{k}{n}\right) &\sim\frac{1}{\sqrt{2\pi n} \, \sigma_\psi(\tau_n)}\frac{\psi(\tau_n)^n}{\tau_n^k} \\&=\frac{1}{\sqrt{2\pi n\, F^{\prime\prime}(s_n)}}e^{n [F(s_n)-s_n (k/n)]}\,, \quad \mbox{as $n \to \infty$}\,.\end{aligned}$$

If $\psi$ is uniformly Gaussian, the asymptotic formula above holds also if $k/n\to \infty$.

\subsection{Lagrangian distributions (and Galton-Watson processes)}\label{section:lagrangian distributions}

We now turn our attention to Lagrangian probability distributions. We refer to \cite{Sibuya} for a neat presentation of the basic theory of these  probability distributions. See also \cite{Consul}, for a comprehensive treatment, and \cite{Pakes}.

We start with a probability generating function $\phi$ of a variable $Y$ taking values in $\{0,1, \ldots\}$.  We assume that $\phi$ is in $\K$.

This variable  $Y$ generates a cascade or Galton-Watson random process starting (initial stage) with a single  individual. The variable $Y$ gives the random number of immediate descendants, the offsprings of each individual in every generation.

The random number of individuals in generation $n$ is denoted by $G_n$; the generation $0$ is the initial stage. We have $G_0\equiv 1$ and
$$(\star) \quad G_{n+1}=\sum_{j=1}^{G_n} Y_{n,j}\,, \quad \mbox{for $n \ge 0$}\,,$$
where the $Y_{n,j}$ are independent copies of $Y$.

Let us denote $\phi^\prime(1)\triangleq \lim_{t\uparrow 1} \phi^\prime(t)=\E(Y)$. Observe that $\phi^\prime(1)=m_\phi(1)$.

The total progeny $Z$ of the single individual  of the generation 0 (including itself) is the random variable $Z=\sum_{n=0}^\infty G_n$. This variable $Z$ could take the value $\infty$, but it is a proper random variable (i.e., $\P(Z<\infty)=1$) if and only if $\psi^\prime(1)\le 1$.

Assume thus that  $\psi^\prime(1)\le 1$. The probability generating function $g$ of the total progeny $Z$ is actually the solution of Lagrange's equation with data $\phi$.

\smallskip

Furthermore,  let $f$ be a power series with nonnegative coefficients  which is the probability generating function of a random variable $X$ taking values in $\{0,1, \ldots\}$.

We  enhance the process by allowing the size of the   initial stage (generation $0$) to be randomly chosen  following the distribution of $X$. Thus $G_0=X$ and the evolution is determined by the recurrence $(\star)$. The total progeny $Z$ including the individuals of the initial stage has probability generating function $f\circ g$.

This size of progeny $Z$ is said to \textit{follow a Lagrangian distribution} $\mathcal{L}(\phi, f)$; $\phi, f$ are called the generators of $\mathcal{L}(\phi, f)$.

If $f(z)\equiv z$, then $X \equiv 1$, and there is (deterministically) a single individual in the initial generation.

We have, see \eqref{eq: Lagrange_general}, that
$$\P(Z=n)=\textsc{coeff}_{[n]} (f(g(z)))=\frac{1}{n} \textsc{coeff}_{[n{-}1]}(f^\prime(z) \phi^n(z))\,, \quad \mbox{for $n \ge 1$}\,,$$
and $\P(Z=0)=f(0)$.

\medskip

\subsection{Lagrangian distributions and Khinchin families}

We will have two ingredients: a Khinchin family of offspring probability distributions and a Khinchin  family
of   probability distribution for the initial distribution.

\medskip

\noindent (A) \quad Let $\psi(z)=\sum_{n=0}^\infty b_n z^n$ be a power series in $\K$ with radius of convergence $R$. We assume from  the outset that $Q_\psi=1$. We let $(Y_t)_{t \in [0,R)}$

Assume that either $M_\psi>1$ and then we let $\tau \in (0,R)$ be such  that $m_\psi(\tau)=1$ or $M_\psi=1$ and $R<\infty$ and $\lim_{t\uparrow R} \sigma_\psi(t)=\sigma_\psi(R)<\infty$ and then we let $\tau=R$.

For each $t \in (0,\tau]$, we let $\psi_t$ denote the power series $\psi_t(z)=\psi(tz)/\psi(t)$. This $\psi_t$ is the  probability generating function of $Y_t$,   and besides, since $t\le \tau$, we have that
$$
(\dag) \quad \psi_t^\prime(1)=m_\psi(t)\le 1\,.$$

For each $t \in (0,\tau]$, we let $g_t$ be the solution of  Lagrange's equation with data $\psi_t$:
\begin{align}\label{eq:lagrange_g_t}
	g_t(z)=z \psi_t(g_t(z))\,.
\end{align}
Because of $(\dag)$  and the discussion above in Section \ref{section:lagrangian distributions}, $g_t$ is the probability generating function of the random distribution of the total progeny  of a single individual with  offspring distribution $Y_t$.

We let $g$ denote the solution of Lagrange's equation with data $\psi$, then we may write each $g_t$ in terms of $g$ as follows

\begin{lem}\label{lema:lagrange_g_t_probgen}
	With the notations above
	$$g_t(z)=\frac{1}{t} g\big((t/\psi(t))z\big)\,, \quad \mbox{for $t \le \tau$}\,.$$
\end{lem}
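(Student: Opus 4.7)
The plan is to define the candidate function
\[
h(z) \;\triangleq\; \frac{1}{t}\, g\!\big((t/\psi(t))\,z\big),
\]
to verify by direct substitution that $h$ satisfies Lagrange's equation with data $\psi_t$, and then to invoke the uniqueness of the holomorphic solution of Lagrange's equation to conclude that $h = g_t$.

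First I would check the normalization: since $g$ is holomorphic near $0$ with $g(0)=0$, the function $h$ is holomorphic near $0$ and satisfies $h(0)=0$, exactly as $g_t$ does. Next, setting $w=(t/\psi(t))z$ and using that $g$ solves $g(w)=w\,\psi(g(w))$, I would compute
\[
h(z) \;=\; \frac{1}{t}\, g(w) \;=\; \frac{1}{t}\, w\, \psi(g(w)) \;=\; \frac{z}{\psi(t)}\,\psi\!\big(t\, h(z)\big) \;=\; z\,\frac{\psi(t\, h(z))}{\psi(t)} \;=\; z\,\psi_t(h(z)).
\]
So $h$ satisfies Lagrange's equation with data $\psi_t$, which is exactly \eqref{eq:lagrange_g_t}.

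By the uniqueness of the holomorphic solution of Lagrange's equation (recall that $\psi_t(0)=1/\psi(t)\neq 0$, so Lagrange's inversion formula applies and pins down the Taylor coefficients of the solution at $z=0$), we get $h=g_t$ on a neighborhood of $0$, and the lemma follows.

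I do not foresee any real obstacle: the argument is essentially a change of variables $w = (t/\psi(t))z$ that transforms the Lagrange equation for $g$ into the Lagrange equation for $g_t$, combined with the uniqueness built into Lagrange's inversion formula. The only point that needs a small remark is that both $h$ and $g_t$ are genuine holomorphic solutions vanishing at the origin, which is immediate from the construction and from the hypotheses on $\psi$ and $t\le\tau$.
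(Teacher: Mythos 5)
Your proposal is correct and follows essentially the same route as the paper: define the candidate $\tfrac{1}{t}\,g\big((t/\psi(t))z\big)$, verify by direct substitution (using $g(w)=w\,\psi(g(w))$) that it solves Lagrange's equation with data $\psi_t$, and conclude by uniqueness of the solution. The only trivial slip is the parenthetical claim $\psi_t(0)=1/\psi(t)$; in fact $\psi_t(0)=\psi(0)/\psi(t)$, which is nonzero since $\psi\in\K$ forces $\psi(0)>0$, so the uniqueness argument goes through unchanged.
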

\begin{proof}
	This is a consequence of the uniqueness of solution of Lagrange's equation. Let $\widetilde{g}_t(z)= \frac{1}{t} g\big((t/\psi(t))z\big)$. Now,
	$$\begin{aligned}z\psi_t(\widetilde{g}_t(z))&=\frac{z}{\psi(t)}\psi(t \, \widetilde{g}_t(z))=
		\frac{z}{\psi(t)} \psi(g\big((t/\psi(t))z\big)\\&=\frac{1}{t}\frac{zt}{\psi(t)} \psi(g\big((t/\psi(t))z\big)=
		\frac{1}{t}g\big((t/\psi(t))z\big)\\&=\widetilde{g}_t(z)\,.\end{aligned}$$
	Uniqueness gives that $\widetilde{g}_t(z)=g_t(z)$, as claimed.\end{proof}

\begin{remark}\label{remark: rescaled_Khinchin_Lagrange}The $g_t$ are \textit{not} the probability generating functions of a Khinchin family, but if we change parameters and substitute $t$ by $g(u)$ and let
	$$\widetilde{g}_u(z)=g_{g(u)}(z)$$
	we have that $\widetilde{g}_u(z)=g(uz)/g(u)$. The power series $g$ is not in $\K$ but $g(z)/z$ is in $\K$, since $g^\prime(0)=\psi(0)>0$. If $(W_u)$ is the Khinchin family of $g(z)/z$ then $\widetilde{g}_u$ is the probability generating functions of $W_u+1$.\end{remark}

\medskip

\noindent (B) \quad Next, we let $f(z)=\sum_{n=0}^\infty a_n z^n$ be a nonconstant power series  with nonnegative coefficients and radius of convergence $S>0$. We do not require $f$ to be in  $\K$; in fact $f(z)=z^m$, for integer $m \ge 1$, is particularly relevant.

For $s \in (0,S)$ we denote by $f_s$ the power series
$$
f_s(z)=\frac{f(sz)}{f(s)}
$$ which is the probability generating function of a random variable $X_s$, say.

\begin{remark}\label{remark: prob_gen_Khinchin}	If the power series $f$ were not in $\K$, then since $f$  is not constant, there should exist at least one index $m\ge 1$ so that $a_m\neq 0$. Thus only two cases could occur.
	In the first case, $f(z) = a_mz^m$, for some integer  $m \geq 1$. And in the second; there is  $\phi \in \K$ and an integer $l \geq 0$ so that $f(z) = z^l \phi(z)$.
	
	In the first case $(X_s)$ is not a Khinchin family, but $X_s \equiv m$, for all $s \in (0,S)$, and therefore there are exactly $m \geq 1$ nodes in the first generation; this is the deterministic case $f_s(z) = z^m$.
	
	In the second case $(X_s)_{s \in [0,S)}$ is a shifted Khinchin family. If $f(0) >0$, then  $(X_s)_{s \in [0,S)}$  is a proper Khinchin family.
\end{remark}

Now, for $s\in (0,S)$ and $t \in (0,\tau]$, the composition $f_s(g_t(z))$ is the probability generating function of the total progeny $Z_{s,t}$ of a Galton-Watson process with initial distribution $f_s$ and offspring distribution $g_t$. The progeny $Z_{s,t}$  has Lagrange distribution $\mathcal{L}( \psi_t, f_s)$.


For radius $u>0$ such that $us<S$ and $ut\le \tau$ and appealing to \eqref{eq: Lagrange_general} we may write
$$
\begin{aligned}
	\P(Z_{s,t}=n)&=\textsc{coeff}_{[n]}\big(f_s(g_t(z))\big)=\frac{1}{n}\textsc{coeff}_{[n{-}1]}\big(f_s^\prime(z) \psi_t^n(z)\big)\\
	&=\frac{s}{f(s)} \frac{\psi(tu)^n}{\psi(t)^n}\frac{1}{u^{n{-}1}}\frac{1}{n}\, \frac{1}{2\pi}\int\limits_{|\theta|\le \pi}
	f^\prime(su e^{\imath \theta}) \frac{\psi(tu e^{\imath \theta})^n}{\psi(tu )^n}e^{-\imath (n{-}1) \theta} d\theta\,.
\end{aligned}
$$

If the parameter $s$ is further restricted to $s\tau<t S$, we may take $u=\tau/t$ in the expression above and write
$$
\begin{aligned}
	\P(Z_{s,t}=n)&=\frac{1}{2\pi}\frac{s}{f(s)} \frac{\psi(\tau)^n}{\psi(t)^n} \left(\frac{t}{\tau}\right)^{n{-}1}\frac{1}{n}\int\limits_{|\theta|\le \pi}
	f^\prime\left(\frac{s\tau}{t}e^{\imath \theta}\right) \E(e^{\imath Y_\tau \theta})^n e^{-\imath (n{-}1) \theta}d\theta
	\\
	&=\frac{1}{2\pi}\frac{s}{f(s)} \frac{\psi(\tau)^n}{\psi(t)^n}\Big(\frac{t}{\tau}\Big)^{n{-}1}\frac{1}{n^{3/2}}
	\frac{1}{\sigma_\psi(\tau)}\\&
	\int\limits_{|\theta|\le \pi \sigma_\psi(\tau) \sqrt{n}}
	f^\prime\left(\frac{s\tau}{t}e^{\imath \theta/(\sigma_\psi(\tau)\sqrt{n})}\right)
	\E(e^{\imath \breve{Y}_\tau\theta/\sqrt{n}})^n e^{\imath \theta/(\sigma_\psi(\tau)\sqrt{n})} d\theta
\end{aligned}
$$

For $\theta$ fixed, we have that
$$\lim_{n \to \infty} f^\prime\left(\frac{s\tau}{t}e^{\imath \theta/(\sigma_\psi(\tau)\sqrt{n})}\right) e^{\imath \theta/(\sigma_\psi(\tau)\sqrt{n})} =f^\prime\left(\frac{s\tau}{t}\right)
$$ and
$$\Big| f^\prime\left(\frac{s\tau}{t}e^{\imath \theta/(\sigma_\psi(\tau)\sqrt{n})}\right) e^{\imath \theta/(\sigma_\psi(\tau)\sqrt{n})} \Big| \le f^\prime\left(\frac{s\tau}{t}\right)\,.
$$
And thus taking into account the integral form of the Local Central Limit Theorem, Theorem \ref{teor:integral de diferencia caracteristicas},   we deduce that
$$
\lim_{n \to \infty}\int\limits_{|\theta|\le \pi \sigma_\psi(\tau) \sqrt{n}}
f^\prime\left(\frac{s\tau}{t}e^{\imath \theta/(\sigma_\psi(\tau)\sqrt{n})}\right)
\E(e^{\imath \breve{Y}_\tau\theta/\sqrt{n}})^n e^{\imath \theta/(\sigma_\psi(\tau)\sqrt{n})} d\theta=\sqrt{2\pi} f^\prime\left(\frac{s\tau}{t}\right)$$
and, consequently, that
\begin{equation}\label{eq:asymptotics Lagrangian distributions}
	\P(Z_{s,t}=n)\sim \frac{1}{\sqrt{2\pi}}\frac{s}{f(s)} \frac{\psi(\tau)^n}{\psi(t)^n}\Big(\frac{t}{\tau}\Big)^{n{-}1}\frac{1}{n^{3/2}}
	\frac{1}{\sigma_\psi(\tau)} f^\prime\left(\frac{s\tau}{t}\right)\, , \quad \mbox{as $n \to \infty$}\,,\end{equation}
as long as $s\tau<t S$, which amounts to no restriction if $S=+\infty$.
\smallskip

\noindent (C) \quad As an illustration,  consider the case where $\psi(z)=e^z$ and $f(z)=z^j$, for some integer $j \ge 1$.

In this case, $R=S=\infty$, $m_\psi(t)=t$ and $\sigma_\psi^2(t)=t$. Also $M_\psi=\infty$ and $\tau=1$.

For $t\le 1=\tau$, we have that $\psi_t(z)=e^{t(z-1)}$ and for $s<\infty $, we have that $f_s(z)=z^j$. Observe that for any $s$, $f_s$ is the probability generating function of the constant $j$.

For $0<t \le 1$ and $0< s<\infty$, the variable $Z_{s,t}$ is the total progeny of a Galton-Watson process, where the initial generation consists of exactly $j$ individuals and the  offspring of each individual is given by a Poisson variable of parameter $t$. This distribution, $\mathcal{L}(e^{t(z-1)}, z^j)$, is the Borel-Tanner distribution with parameters $t$ and $j$. The case $j=1$ is the Borel distribution. See \cite{Consul}, \cite{Pitman} and \cite{Sibuya}, and also the original sources \cite{Borel}, \cite{HaightBreuer} and \cite{Tanner}.

Using \eqref{eq:asymptotics Lagrangian distributions}, we deduce that
\begin{equation}\label{eq:formula uno Zst}
\P(Z_{s,t}=n)\sim \frac{j}{\sqrt{2\pi}} \frac{1}{n^{3/2}} {t^{n-j}} e^{n(1-t)}\,,
\quad \mbox{as $n \to \infty$}\,.
\end{equation}
In fact, for the Borel-Tanner distribution with parameters $t$ and $j$ we have the exact formula
\begin{equation}\label{eq:borel-tanner}\P(Z_{s,t}=n)=\frac{j}{n} \frac{e^{-tn} (tn)^{n{-}j}}{(n-j)!}\,, \quad \mbox{for $n \ge j$}\,.\end{equation}
The asymptotic formula \eqref{eq:formula uno Zst} follows then from Stirling's formula.

\medskip

\noindent (D) \quad Consider now the case $\psi(z)=e^z$ and $f(z)=e^z$, so that  $R=S=\infty$, $m_\psi(t)=t$ and $\sigma_\psi^2(t)=t$ and, also, $M_\psi=\infty$ and $\tau=1$.

For $t\le 1=\tau$, we have that $\psi_t(z)=e^{t(z-1)}$ and for $s<\infty $, we have that $f_s(z)=e^{s(z-1)}$.

For $0<t \le 1$ and $0< s<\infty$, the variable $Z_{s,t}$ is the total progeny of a Galton-Watson process, where the size of the  initial generation is drawn from a Poisson distribution of parameter $s$  individuals and the  offspring of each individual is given by a Poisson variable of parameter $t$. This distribution, in Lagrangian distribution notation, is  $\mathcal{L}(e^{t(z-1)}, e^{s(z-1)})$.

From \eqref{eq:asymptotics Lagrangian distributions}, we have  that
\begin{equation}\label{eq:formula dos Zst}\P(Z_{s,t}=n)\sim \frac{1}{\sqrt{2\pi}} e^{s/t-s} s t^{n{-}1} e^{n(1-t)}\frac{1}{n^{3/2}}\,, \quad \mbox{as $n \to \infty$}\,.\end{equation}

Conditioning on the size of the initial generation and using \eqref{eq:borel-tanner}, we deduce for the distribution $\mathcal{L}(e^{t(z-1)}, e^{s(z-1)})$ that
$$\P(Z_{s,t}=n)=\frac{1}{n!} e^{-tn-s} (tn+s)^{n{-}1} s\,, \quad \mbox{para $n \ge 1$}\,.$$
See \cite{Sibuya}.
The asymptotic formula \eqref{eq:formula dos Zst} follows then from Stirling's formula.

\medskip

\noindent (E) \quad \textit{Back to Galton-Watson.}
Assume that  $\psi$ is a probability generating function and that $f(z)=z$.
We assume that $\psi^{\prime}(1)\le 1$, so that the total progeny $Z$ of the Galton-Watson process starting with a single individual and with offspring distribution $\psi$ is a (proper) random variable.

Assume that $M_\psi>1$ or $M_\psi=1$ with $R<\infty$ and $\sigma_\psi(R)<\infty$. In the first case we take $\tau\in (0,R)$ such that $m_\psi(\tau)=1$. Since $\psi^{\prime}(1)\le 1$, we have that $m_\psi(1)=1$, and since $m_\psi$ is increasing we see that $1\le \tau <R$. In the second case we take $\tau=R$; observe that $R\ge 1$.

With $t=1$ (so that $\psi_1(z) \equiv \psi(z)$) and $s=1$ (an immaterial choice since $f(z)=z$), we have
$$\P(Z=n)\sim \frac{1}{\sqrt{2\pi}}\frac{\tau}{\sigma_{\psi}(\tau)} \Big(\frac{\psi(\tau)}{\tau}\Big)^n \frac{1}{n^{3/2}}\,, \quad \mbox{as $n \to \infty$}\,.$$ This is Theorem \ref{teor:Otter-Meir-Moon} applied to the solution of Lagrange's equation with data $\psi$. Recall that $\sigma_{\psi}^2(\tau) = {\tau^2 \psi^{\prime \prime}(\tau)}/{\psi(\tau)}$, see formula (\ref{eq: varianza_tau}).

\subsubsection{Limit cases} By limit cases we mean  $t=\tau$ and $s \to S$, with $S<\infty$. See \cite{Mutafchiev} for related results.

For $t=\tau$ and $s<S$ we have  the exact formula
$$(\flat)\qquad \begin{aligned}
	\P(Z_{s,\tau}=n)
	&=\frac{1}{2\pi}\frac{s}{f(s)} \frac{1}{n^{3/2}}
	\frac{1}{\sigma_\psi(\tau)}\\&
	\int\limits_{|\theta|\le \pi \sigma_\psi(\tau) \sqrt{n}}
	f^\prime\big(se^{\imath \theta/(\sigma_\psi(\tau)\sqrt{n})}\big)
	\E(e^{\imath \breve{Y}_\tau\theta/\sqrt{n}})^n e^{\imath \theta/(\sigma_\psi(\tau)\sqrt{n})} d\theta
\end{aligned}
$$
and the asymptotic formula.
$$
\P(Z_{s,\tau}=n)\sim \frac{1}{\sqrt{2\pi}} m_f(s) \frac{1}{\sigma_\psi(\tau)} \frac{1}{n^{3/2}}
\, , \quad \mbox{as $n \to \infty$}\,,$$
where with a slight abuse of notation we have written ${sf^\prime(s)}/{f(s)} =m_f(s)$, see  Remark \ref{remark: prob_gen_Khinchin}.

To consider  $s \to S$, we first rewrite $(\flat)$ as
$$(\flat\flat)\qquad \begin{aligned}
	\P(Z_{s,\tau}=n)&=\frac{1}{2\pi} \frac{1}{n^{3/2}}{m_f(s)}
	\frac{1}{\sigma_\psi(\tau)}\\&\int\limits_{|\theta|\le \pi \sigma_\psi(\tau) \sqrt{n}}
	\dfrac{f^\prime\big(se^{\imath \theta/(\sigma_\psi(\tau)\sqrt{n})}\big)}{f^\prime(s)}
	\E(e^{\imath \breve{Y}_\tau\theta/\sqrt{n}})^n e^{\imath \theta/(\sigma_\psi(\tau)\sqrt{n})} d\theta\end{aligned}$$

If $\sum_{n=0}^\infty n a_n S^n<\infty$, then $f^\prime$ extends continuously to $\cl(\D(0,S))$, and by appealing to Theorem \ref{teor:integral de diferencia caracteristicas}, we readily see that
$$
\lim_{\substack{s\uparrow S;\\n \to \infty}} \frac{n^{3/2}}{m_f(s)}\P(Z_{\tau,s}=n)=\frac{1}{\sigma_\psi(\tau)\sqrt{2\pi} }\,.$$

More generally, if $s_n<S$ and $n$ are such that
$$
(\sharp) \quad\lim_{n \to \infty}\frac{f^\prime(s_n e^{\imath \phi/\sqrt{n}})}{f^\prime(s_n)}=1\,, \quad \mbox{for each $\phi \in \R$}\,,$$
then
$$
\lim_{n \to \infty} \frac{n^{3/2}}{m_f(s_n)}\P(Z_{\tau,s_n}=n)=\frac{1}{\sigma_\psi(\tau)\sqrt{2\pi} }\,.$$

For instance for $f(z)=1/(1-z)$, condition $(\sharp)$ is satisfied if $s_n$ and $n$ are related so that $\lim_{n \to \infty}(1-s_n)\sqrt{n}=\infty$.

\section{
	Appendix. Uniformly Gaussian  Khinchin families}\label{section:uniformly Gaussian}

Let $f(z)=\sum_{n=0}^\infty b_n z^n$ be a power series in $\K$ with radius of convergence $R>0$ and let $(X_t)_{t \in [0,R)}$ be its Khinchin family.

\medskip

We have encountered two integral convergence results: the integral form of the Local Central Limit Theorem \ref{teor:integral de diferencia caracteristicas} and the notion of strongly gaussian power series, definition \ref{defin:strongly gaussian}.

\medskip

\noindent $\bullet$ Assume that $Q_f=\gcd\{n \ge 1: b_n >0\}=1$. \textit{For each fixed $t \in (0,R)$}, the normalized variable $\breve{X}_t$ is a lattice random variable with gauge function $1/\sigma_f(t)$, since $Q_f=1$. Because of Theorem \ref{teor:integral de diferencia caracteristicas} we then have that
$$(\dag)\qquad \lim_{n \to \infty} \int\limits_{|\theta|\le \pi \sigma_f(t) \sqrt{n}}
\Big|\E\Big(e^{\imath \theta\breve{X}_t/\sqrt{n}}\Big)^n -e^{-\theta^2/2}\Big|\, d\theta=0\, .$$

\

\noindent $\bullet$  If the Khinchin family $(X_t)_{t \in [0,R)}$ is strongly Gaussian, then we have \textit{for each fixed $n \ge 1$}, that
$$(\ddag) \qquad \lim_{t \uparrow R} \int\limits_{|\theta|\le \pi \sigma_f(t) \sqrt{n}}
\Big|\E\Big(e^{\imath \theta\breve{X}_t/\sqrt{n}}\Big)^n -e^{-\theta^2/2}\Big|\, d\theta=0\, .$$

This fact follows from the bound
$$\begin{aligned}
	\int\limits_{|\theta|\le \pi \sigma_f(t) \sqrt{n}}
	\Big|\E\Big(e^{\imath\theta \breve{X}_t/\sqrt{n}}\Big)^n -e^{-\theta^2/2}\Big|\, d\theta&=\sqrt{n}\, \int\limits_{|\varphi|\le \pi \sigma_f(t) }
	\Big|\E\Big(e^{\imath \varphi\breve{X}_t}\Big)^n -e^{-\varphi^2 n/2}\Big|\, d\varphi
	\\&\le n^{3/2} \int\limits_{|\varphi|\le \pi \sigma_f(t) }
	\Big|\E\Big(e^{\imath \varphi\breve{X}_t}\Big) -e^{-\varphi^2/2}\Big|\, d\varphi\, ,\end{aligned}$$
where, after the change of variables $\theta=\varphi \sqrt{n}$,  we have used that for complex numbers $z,w$ such that $|z|,|w|\le 1$ we have that $|z^n-w^n|\le n |z-w|$.

\

Thus, for a Gaussian power series the integral
$$I_n(t)=\int\limits_{|\theta|\le \pi \sigma_f(t) \sqrt{n}}
\Big|\E\Big(e^{\imath\theta \breve{X}_t/\sqrt{n}}\Big)^n -e^{-\theta^2/2}\Big|\, d\theta
$$
converges to 0 as $n \to \infty$ with $t$ fixed and as $t \uparrow R$ with $n$ fixed.

\

Power series $f$ in $\K$ with Khinchin family $(X_t)_{t \in [0,R)}$ are power series for which $(\dag)$ and $(\ddag)$ hold \textit{simultaneously} in the sense that the involved integrals converges to 0, as $n \to \infty$ or $t\uparrow R$.

\begin{defin}\label{defin:uniformly Gaussian} A power series $f$ and its Khinchin  family $(X_t)_{t \in [0,R)}$ are called \textit{uniformly Gaussian} if the following two conditions are satisfied:
	$$a)\quad \lim_{t \uparrow R} \sigma_f(t)=\infty\quad \mbox{and} \quad b) \quad \lim_{[n \to \infty \, \vee \,  t \uparrow R]} \int\limits_{|\theta|\le \pi \sigma_f(t) \sqrt{n}}
	\Big|\E\Big(e^{\imath \theta\breve{X}_t/\sqrt{n}}\Big)^n -e^{-\theta^2/2}\Big|\, d\theta=0\, .$$
\end{defin}

By $[n{\to} \infty \, \vee \,  t{\uparrow} R]$ we mean that $1 \le n \to \infty$ or (inclusive) $0\le t_0\le t \uparrow R$.
The restriction $t>t_0$ is there to exclude  the possibility of $t \to 0$ and $n \to \infty$, simultaneously.

\

By fixing $n=1$ and letting $t \uparrow R$, we observe that uniformly Gaussian power series are  strongly Gaussian. In particular, if $f$ is a  uniformly Gaussian power series then
$$M_f=\lim_{t \uparrow R} m_f(t)=\infty\,.$$
Moreover, the coefficients $b_n$ of $f$ satisfy Hayman's asymptotic formula and in particular $Q_f=1$.
\

Let us verify that the exponential $f(z)=e^z$ is uniformly Gaussian. We have $\sigma_f(t)=\sqrt{t}$, for $t >0$ and so $\sigma_f(t)\sqrt{n}=\sigma_f(nt)$, for $t >0$ and $n \ge 1$.
For the characteristic function of the normalized variable $\breve{X}_t$ we have that
$$
\E\big(e^{\imath \theta \breve{X}_t}\big)=\exp\big(t\, \big(e^{\imath \theta/\sqrt{t}}-1-\imath \theta/\sqrt{t}\big)\Big)\, ,$$
and thus that
$$\E\big(e^{\imath \theta \breve{X}_t/\sqrt{n}}\big)^n=\E\big(e^{\imath \theta \breve{X}_{nt}}\big) \,.$$
Therefore,
$$
\int\limits_{|\theta|\le \pi \sigma_f(t) \sqrt{n}}
\Big|\E\Big(e^{\imath \theta\breve{X}_t/\sqrt{n}}\Big)^n -e^{-\theta^2/2}\Big|\, d\theta=
\int\limits_{|\theta|\le \pi \sigma_f(n t)}
\Big|\E\Big(e^{\imath \theta\breve{X}_{n t}}\Big) -e^{-\theta^2/2}\Big|\, d\theta\,.
$$
Since $e^z$ is strongly Gaussian, this integral tends to 0 as $(nt)\to \infty$, and thus as
$[n{\to} \infty \, \vee \,  t{\uparrow} +\infty]$. Observe that in this case $R=+\infty$.

\

The notion of uniformly Hayman power series  which we are about to introduce generalizes that of Hayman power series, see Section \ref{section:Hayman class}. We will verify shortly that uniformly Hayman power series are uniformly Gaussian, much like power series in the Hayman class are strongly gaussian. Theorem \ref{teor:exponentials uniformly Hayman} provides us with an ample class of power series which are uniformly Hayman.

\begin{defin}\label{defin:uniformly Hayman}
	Let $f(z)$ be a power series in $\K$ with radius of convergence $R>0$ and let $(X_t)_{t \in [0,R)}$ be its Khinchin family.

	We say that $f(z)$ and  $(X_t)_{t \in [0,R)}$ are \textit{uniformly Hayman} if for each $n \ge 1$ and $t \in (0,r)$ there exists   $h(n,t) \in (0, \pi)$ (called \textit{cuts}) such that the following requirements  are satisfied:
	\begin{align}
		\label{eq:uniformly Hayman major}\qquad  &  \sup_{|\theta|\le h(n,t) \sigma_f(t) \sqrt{n}} \Big|\E(e^{\imath \theta \breve{X}_t /\sqrt{n}})^n e^{\theta^2/2}-1\Big| \rightarrow 0, \quad \mbox{as $[n{\to}\infty \, \vee \,  t{\uparrow}R]$}\,,
		\\[9pt]
		\label{eq:uniformly Hayman minor}\qquad & \sqrt{n} \sigma_f(t) \, \sup_{ h(n,t) \sigma_f(t) \sqrt{n}\le |\theta| \le \pi \sigma_f(t) \sqrt{n}} \big|\E(e^{\imath \theta \breve{X}_t/\sqrt{n}})^n\big| \rightarrow 0, \quad \mbox{as $[n {\to} \infty \, \vee \,  t {\uparrow} R]$}\,,
		\\[9pt]
		\label{eq:uniformly Hayman variance}\qquad &\lim_{t \to R} \sigma_f(t)=\infty\, .
	\end{align}
\end{defin}

\

Condition \eqref{eq:uniformly Hayman minor} may be written equivalently as
\begin{equation}
	\label{eq:uniformly Hayman minor bis} \qquad  \sqrt{n} \sigma_f(t) \, \sup_{ h(n,t) \le |\theta| \le \pi}\big|\E(e^{\imath \theta X_t})^n\big| \rightarrow 0, \quad \mbox{as $[n {\to} \infty \, \vee \,  t {\uparrow} R]$}\,.\end{equation}

\

As announced,
\begin{theor}
	Uniformly Hayman power series are uniformly Gaussian.
\end{theor}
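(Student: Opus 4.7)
The plan is to follow the template by which power series in the Hayman class are shown to be strongly Gaussian, splitting the defining integral at the natural cut
$$M(n,t)\triangleq h(n,t)\,\sigma(t)\sqrt{n}$$
supplied by the uniformly Hayman hypothesis and bounding each piece by the appropriate defining condition. Condition $a)$ of Definition \ref{defin:uniformly Gaussian} is already \eqref{eq:uniformly Hayman variance}. Throughout, the regime $[n\to\infty\vee t\uparrow R]$ keeps $t\ge t_0>0$, so $\sigma(t)\ge \sigma(t_0)>0$ together with \eqref{eq:uniformly Hayman variance} forces $\sigma(t)\sqrt{n}\to\infty$.

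For the major-arc integral, I would factor
$$\bigl|\E\bigl(e^{\imath\theta\breve X_t/\sqrt n}\bigr)^n - e^{-\theta^2/2}\bigr| \,=\, e^{-\theta^2/2}\,\bigl|\E\bigl(e^{\imath\theta\breve X_t/\sqrt n}\bigr)^n e^{\theta^2/2}-1\bigr|,$$
integrate on $|\theta|\le M(n,t)$, and pull out the supremum via \eqref{eq:uniformly Hayman major}, leaving the remaining factor $\int_\R e^{-\theta^2/2}\,d\theta=\sqrt{2\pi}$; the bound is $\sqrt{2\pi}\cdot o(1)$. For the minor-arc integral on $M(n,t)\le|\theta|\le\pi\sigma(t)\sqrt n$, I would use the triangle inequality to split it into $\int|\E(e^{\imath\theta\breve X_t/\sqrt n})^n|\,d\theta$ and the Gaussian tail $\int_{|\theta|\ge M(n,t)}e^{-\theta^2/2}\,d\theta$. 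The first of these is at most $2\pi\sigma(t)\sqrt n\cdot\sup|\E(\cdots)^n|$, which is $o(1)$ by \eqref{eq:uniformly Hayman minor}.

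The single step that is not purely mechanical, and the main obstacle, is to show that the Gaussian tail $\int_{|\theta|\ge M(n,t)}e^{-\theta^2/2}\,d\theta$ vanishes, i.e., that $M(n,t)\to\infty$ in the regime $[n\to\infty\vee t\uparrow R]$; this cannot be read off from \eqref{eq:uniformly Hayman major} alone, since the cut $h(n,t)$ is in principle allowed to shrink arbitrarily fast. The plan is to extract the divergence of $M(n,t)$ by comparing the two defining conditions at their shared endpoint $|\theta|=M(n,t)$, which belongs to both arcs. There \eqref{eq:uniformly Hayman major} yields $|\E(e^{\imath M\breve X_t/\sqrt n})^n|=e^{-M^2/2}(1+o(1))$, while \eqref{eq:uniformly Hayman minor} yields $|\E(e^{\imath M\breve X_t/\sqrt n})^n|\le \varepsilon_{n,t}/(\sigma(t)\sqrt n)$ with $\varepsilon_{n,t}\to 0$. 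Chaining these two inequalities gives
$$\tfrac{1}{2}\,M(n,t)^2 \,\ge\, \log\bigl(\sigma(t)\sqrt n\bigr)\,+\,\bigl|\log(2\varepsilon_{n,t})\bigr|\,-\,o(1),$$
whose right-hand side diverges since $\sigma(t)\sqrt n\to\infty$ and $\varepsilon_{n,t}\to 0$; hence $M(n,t)\to\infty$, and combining the three pieces yields condition $b)$ of Definition \ref{defin:uniformly Gaussian}.
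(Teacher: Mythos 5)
Your proof is correct and follows essentially the same route as the paper: the same major/minor splitting at $h(n,t)\sigma(t)\sqrt{n}$, and the same key observation that evaluating both defining conditions at the shared endpoint forces $\sqrt{n}\,\sigma(t)\,e^{-M(n,t)^2/2}\to 0$ and hence $M(n,t)\to\infty$, which controls the Gaussian tail. The only difference is organizational (the paper proves the divergence of the cut first), so there is nothing to add.
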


The proof below is analogous to the proof of Theorem \ref{teor:hayman implica baez} of Section \ref{section:Hayman class} which claims that Hayman power series are strongly Gaussian.

\begin{proof} Denote $\theta(n,t)=h(n,t) \sigma_f(t) \sqrt{n}$. First we show,
	that
	\begin{equation}\label{eq:theta(n,t) to infty}
		\theta(n,t) \to \infty\, , \quad \mbox{as $[n {\to} \infty \, \vee \,  t {\uparrow} R]$}\,.
	\end{equation}

	Abbreviate $\widehat{\theta}=\theta(n,t)$.
	By \eqref{eq:uniformly Hayman major}, we have that
	$$\E(e^{\imath \widehat{\theta} \breve{X}_t/\sqrt{n}}) e^{\widehat{\theta}^2/2} \to 1\, , \quad
	\mbox{as $[n {\to} \infty \, \vee \,  t {\uparrow} R]$}\,,$$
	while, from  \eqref{eq:uniformly Hayman minor} we obtain that
	$$\sqrt{n} \sigma_f(t) \E\big(e^{\imath \widehat{\theta} \breve{X}_t/\sqrt{n}}\big)^n \to 0\, , \quad \mbox{as $[n {\to} \infty \, \vee \,  t {\uparrow} R]$}\,.$$
	From these two limits we deduce that
	$$\sqrt{n}\sigma_f(t) e^{-\widehat{\theta}^2/2} \to 0\, , \quad \mbox{as $[n {\to} \infty \, \vee \,  t {\uparrow} R]$}\,,$$
	and thus, since $\sqrt{n}\sigma_f(t) \to \infty$ as   $[n {\to} \infty \, \vee \,  t {\uparrow} R]$ we deduce that
	$$\widehat{\theta} \to \infty \, , \quad \mbox{as $[n {\to} \infty \, \vee \,  t {\uparrow} R]$}\,.$$
	
	\
	
	Denote by $A(n,t), B(n,t)$, respectively,  the supremum in \eqref{eq:uniformly Hayman major} and \eqref{eq:uniformly Hayman minor}.
	
	\
	
	We bound
	$$\begin{aligned} \int\limits_{|\theta|\le h(n,t) \sigma_f(t) \sqrt{n}} &\Big|\E(e^{\imath \theta \breve{X}_t /\sqrt{n}})^n- e^{-\theta^2/2}\Big|\, d\theta\\=\int\limits_{|\theta|\le h(n,t) \sigma_f(t) \sqrt{n}} &\Big|\E(e^{\imath \theta \breve{X}_t /\sqrt{n}})^n e^{\theta^2/2}-1\Big|  e^{-\theta^2/2}\, d \theta \\&\le  A(n,t)\sqrt{2\pi}\, ,\end{aligned}$$
	and
	$$\begin{aligned}\int\limits_{h(n,t) \sigma_f(t) \sqrt{n} \le |\theta|\le \pi \sigma_f(t)\sqrt{n}}&\Big|\E(e^{\imath \theta \breve{X}_t /\sqrt{n}})- e^{-\theta^2/2}\Big|\, d\theta \\ \\\le &2\pi \sigma_f(t) \sqrt{n}B(n,t)+ \int_{|\theta|\ge h(n,t) \sigma_f(t) \sqrt{n}} e^{-\theta^2/2} \, d \theta\, .\end{aligned}$$
	
	These two bounds and conditions \eqref{eq:uniformly Hayman major} and \eqref{eq:uniformly Hayman minor} combined with \eqref{eq:theta(n,t) to infty} give the result.
	
\end{proof}

\subsection{Uniformly Hayman exponentials}\label{section:exponentials uniformly Hayman}

Let   $g(z)=\sum_{n=0}^\infty b_n z^n$, with  $b_n \ge 0$, for $n \ge 0$, and radius of convergence $R>0$. Let $f\in \K $ be given by $f=e^g$.

Exponentials of power series with positive coefficients are very relevant, in particular, in Combinatorics since they codify (most) generating functions of the set construction, which includes among them generating functions of partitions of many sorts.

One of the main results of \cite{K_dos}, see \cite[Theorem 4.1]{K_uno} and \cite[Theorem F]{K_dos} gives conditions on the powers series $g$ that guarantees that $f=e^g$ is in the Hayman class, and thus strongly Gaussian, and, therefore,  amenable to  the Hayman asymptotic formula, \eqref{eq:asymptotic formula coeffs}.

It turns out that these same conditions on $g$ are enough for $f$ being uniformly Hayman; this is the content of Theorem \ref{teor:exponentials uniformly Hayman}.

Denote
\begin{equation}\label{definition omegag}
	\omega_g(t)\triangleq\frac{1}{6}(b_1 t+8 b_2 t^2+\frac{9}{2} t^3g^{\prime\prime\prime}(t)) \, , \quad \mbox{for $t \in (0,R)$}\, .
\end{equation}

\begin{theo}\label{teor:exponentials uniformly Hayman} Let $g$ be a nonconstant power series with radius of convergence $R>0$ and nonnegative coefficients.
	
	Assume that  the variance condition is satisfied \begin{equation}\label{eq:variance condition for e^g} \lim_{t \uparrow R} \big(tg^\prime(t) +t^2g^{\prime\prime}(t)\big)=+\infty\,.\end{equation}
	
	Assume further that there is a cut function $h(t)$ satisfying
	\begin{equation}
		\label{eq:arco mayor para f=e^g} \lim_{t \uparrow R}\omega_g(t) h(t)^3=0\,.\end{equation}
	and
	that there are positive functions $U, V$ defined in $(t_0,R)$, for some  $t_0\in (0,R)$, where $U$ takes values in $(0,\pi]$ and $V$ in $(0,\infty)$ and are such that
	\begin{equation}\label{eq:arco menor para f=e^g bis uno}
		\sup\limits_{|\varphi|\ge\omega}\big(\Re g(t e^{\imath \varphi})-g(t)\big)\le -V(t) \, \omega^2\, , \quad \mbox{for $\omega\le U(t)$ and $t \in (t_0,R)$}\, ,
	\end{equation}
	and thus that the cut $h$ is such that
	\begin{equation}\label{eq:arco menor para f=e^g bis dos}
		h(t)\le U(t), \, \mbox{for $t \in (t_0,1)$}\quad \mbox{and} \quad  \lim_{t \uparrow R} \sigma_f(t) \, e^{-V(t)  h(t)^2}=0\, .
	\end{equation}
	then the function $f=e^g$ is  uniformly Hayman.
\end{theo}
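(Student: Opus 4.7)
The plan is to mirror the proof of Theorem~\ref{teor:hayman implica baez} (Hayman $\Rightarrow$ strongly Gaussian), but uniformly in the exponent $n$ as well as in the radius $t$. The variance condition~\eqref{eq:uniformly Hayman variance} follows directly from~\eqref{eq:variance condition for e^g} since $\sigma^2(t)=t g^\prime(t)+t^2 g^{\prime\prime}(t)$. The only real choice is the joint cut $h(n,t)$; I would take
$$h(n,t) := h(t)\, n^{-\alpha}, \qquad \text{with a fixed } \alpha \in \big(\tfrac{1}{3},\tfrac{1}{2}\big).$$
Since $h(n,t)\le h(t)\le U(t)\le\pi$, this is a legitimate cut, and the window for $\alpha$ will turn out to be forced by the two arc conditions pulling in opposite directions.

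For the major arc, first write $f=e^g$ and substitute $\varphi=\theta/(\sigma(t)\sqrt{n})$ in the characteristic function to obtain the clean identity
$$\E\big(e^{\imath\theta\breve{X}_t/\sqrt{n}}\big)^n\, e^{\theta^2/2} = \exp\big(nR_3(\varphi,t)\big),$$
where $R_3(\varphi,t)$ is the degree-$3$ Taylor remainder at $0$ of $G(\varphi) := g(te^{\imath\varphi})$. Since $G^{\prime\prime\prime}(\psi) = -\imath\sum_{k\ge 1} k^3 b_k t^k e^{\imath k\psi}$ and $k^3\le (9/2)\,k(k-1)(k-2)$ for $k\ge 3$, one gets $|G^{\prime\prime\prime}(\psi)|\le 6\,\omega_g(t)$, which is exactly what the definition of $\omega_g$ in~\eqref{definition omegag} is engineered for; Taylor then yields $|R_3(\varphi,t)|\le|\varphi|^3\,\omega_g(t)$. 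On the major arc $|\varphi|\le h(n,t)$, this gives $|nR_3|\le n^{1-3\alpha}\,h(t)^3\,\omega_g(t)$, which tends to $0$ under $[n\to\infty\vee t\uparrow R]$ because $1-3\alpha<0$ while $h(t)^3\,\omega_g(t)\to 0$ by~\eqref{eq:arco mayor para f=e^g}. A quick case analysis over the three regimes (fixed $t$ with $n\to\infty$, fixed $n$ with $t\uparrow R$, and both) then delivers~\eqref{eq:uniformly Hayman major}.

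For the minor arc, apply~\eqref{eq:arco menor para f=e^g bis uno} with $\omega=h(n,t)\le U(t)$ to obtain
$$\big|\E\big(e^{\imath\theta\breve{X}_t/\sqrt{n}}\big)^n\big| = \exp\big(n(\Re g(te^{\imath\varphi})-g(t))\big) \le \exp\big(-n^{1-2\alpha}\,V(t)\,h(t)^2\big).$$
The supremum in~\eqref{eq:uniformly Hayman minor} is then bounded by $\sqrt{n}\,\sigma(t)\,e^{-n^{1-2\alpha}\,V(t)\,h(t)^2}$, and I would verify it tends to $0$ by splitting the combined limit: if $n\to\infty$ with $t$ bounded away from $R$, the exponential decay beats the polynomial factor (using boundedness of $V$ and $h$ on compacts); if $t\uparrow R$ with $n$ bounded, the bound reduces to~\eqref{eq:arco menor para f=e^g bis dos}; in the joint regime, use that~\eqref{eq:arco menor para f=e^g bis dos} forces $V(t)h(t)^2\ge\log\sigma(t)$ for $t$ near $R$, collapsing the estimate to $\sqrt{n}\,\sigma(t)^{1-n^{1-2\alpha}}$, which vanishes because $n^{1-2\alpha}\to\infty$.

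The main obstacle is precisely the choice of $h(n,t)$: the major arc needs $n\,h(n,t)^3\to 0$ (the Taylor cubic is amplified by the $n$-th power), whereas the minor arc needs $n\,h(n,t)^2$ to grow fast enough to beat the $\sqrt{n}\,\sigma(t)$ prefactor in the exponential decay. These demands meet only in the window $\alpha\in(1/3,1/2)$, and the only genuinely new verification beyond the classical Hayman argument is the uniformity along sequences where $n\to\infty$ and $t\uparrow R$ simultaneously; the computation above reduces this to the single estimate $\sqrt{n}\,\sigma(t)^{1-n^{1-2\alpha}}\to 0$, which is elementary.
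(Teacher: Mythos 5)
Your proposal is correct and follows essentially the same route as the paper's proof: the same joint cut $h(n,t)=h(t)n^{-\alpha}$ with $\alpha\in(1/3,1/2)$, the same major-arc estimate $|n\ln\E(e^{\imath\theta\breve X_t/\sqrt n})+\theta^2/2|\le \omega_g(t)h(t)^3n^{1-3\alpha}$ (you re-derive the third-derivative bound that the paper imports from \cite{K_uno}), and the same minor-arc bound $\exp(-V(t)h(t)^2n^{1-2\alpha})$. The only cosmetic difference is in closing the minor-arc limit, where you use $V(t)h(t)^2\ge\log\sigma(t)$ near $R$ while the paper uses $V(t)h(t)^2\,n^{1-2\alpha}\ge V(t)h(t)^2+n^{1-2\alpha}$; both exploit hypothesis \eqref{eq:arco menor para f=e^g bis dos} in the same way.
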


\begin{proof} Condition \eqref{eq:variance condition for e^g} is directly condition \eqref{eq:uniformly Hayman variance}.
	
	We shall verify the conditions on the cuts of the definition of uniformly Hayman power series \eqref{eq:uniformly Hayman major} and \eqref{eq:uniformly Hayman minor bis} with cuts $h(n,t)$ given by
	$$h(n,t)= h(t) \, n^{-\beta}$$
	where the parameter $\beta$ satisfies $1/3 < \beta < 1/2$.
	
	\
	
	From the discussion of Section 4 of \cite{K_uno}, we have that
	$$\Big|\ln \E(e^{\imath \theta \breve{X}_t})+\frac{\theta^2}{2}\Big|\le \omega_g(t)\frac{|\theta|^3}{ \sigma^3(t)}\, , \quad \mbox{for $t \in (0,R)$ and $\theta \in \R$}\, ,$$
	and, thus, that
	$$\Big|n\ln \E(e^{\imath \theta \breve{X}_t/\sqrt{n}})+\frac{\theta^2}{2}\Big|\le \omega_g(t)\frac{|\theta|^3}{ \sigma^3(t) \sqrt{n}}\, , \quad \mbox{for $t \in (0,R)$ and $\theta \in \R$}\, .$$
	
	For $|\theta|\le h(n,t) \sigma_f(t) \sqrt{n}$ we deduce that
	$$\Big|n\ln \E(e^{\imath \theta \breve{X}_t/\sqrt{n}})+\frac{\theta^2}{2}\Big|\le \omega_g(t) \, h(t)^3 \, n^{1{-}3\beta}$$
	
	Hypothesis \eqref{eq:arco mayor para f=e^g} on $h(t)$ and the fact that $1-3\beta <0$ gives us
	that
	$$
	\lim\limits_{[n{\to}\infty \vee t{\uparrow}R]} \omega_g(t) \, h(t)^3 \, n^{1{-}3\beta}=0\, ,$$
	and, thus, that condition \eqref{eq:uniformly Hayman major} is satisfied.
	
	\
	
	Since $h(n,t)\le h(t)\le U(t)$, condition \eqref{eq:arco menor para f=e^g bis uno} gives us that
	$$n \sup\limits_{h(n,t) \le |\theta|\le \pi} \big(\Re g(te^{\imath \theta})-g(t)\big)\le -V(t) h(t)^2 n^{1{-}2\beta}\, ,$$
	and so that
	$$\sup\limits_{h(n,t) \le |\theta|\le \pi} \big|\E(e^{\imath \theta X_t})^n\Big|\le \exp\Big(-V(t) h(t)^2 n^{1{-}2\beta}\Big)\,.$$
	
	Since $\lim_{t \uparrow R}V(t) h(t)^2=\infty$, we have, for a certain $t_0 \in (0,R)$ that $V(t) h(t)^2\ge 1$, for $t \in (t_0,R)$, and thus that
	$$V(t) h(t)^2 n^{1{-}2\beta}\ge V(t) h(t)^2 + n^{1{-}2\beta}\,.$$
	
	We deduce that
	$$\sqrt{n}\sigma_f(t)\, \sup\limits_{h(n,t) \le |\theta|\le \pi} \big|\E(e^{\imath \theta X_t})^n\big| \le \sqrt{n}e^{-n^{1{-}2\beta}}\, \sigma_f(t) e^{-V(t) h(t)^2}\, ,$$
	and,  because of hypothesis \eqref{eq:arco menor para f=e^g bis dos} and since $\beta< 1/2$, that condition \eqref{eq:uniformly Hayman minor bis} of the definition of uniformly Hayman is satisfied.
\end{proof}

\

In \cite{K_dos}  a large number of exponentials $f=e^g$ where $g$ is a power series with nonnegative coefficients   which satisfy the conditions of Theorem \ref{teor:exponentials uniformly Hayman} are exhibited. For instance, the egf of the  Bell numbers, or the generating functions $P$ of partitions or $Q$ of partitions into  distinct parts, are actually uniformly Hayman, and also are uniformly Hayman related examples like the egf of sets of pointed sets, the egf of sets of functions or the ogf of plane partitions or of some colored partitions.

\subsection{Exponential of polynomials}\label{section:exponential polynomial uniformly Hayman}  Let $g$ be a polynomial with nonnegative coefficients
$
g(z)=\sum_{n=0}^N b_n z^n$
and of degree $N$, so that $b_N>0$.

Assume that $Q_g=\gcd\{1\le n \le N: b_n>0\}=1$. Then $f=e^g$ is in the Hayman. This is a particular case of a result of Hayman \cite[Theorem X]{Hayman}. See \cite[Proposition 5.1]{K_uno} for a simpler proof of this particular case.

\smallskip

We are going to show next that \textit{$f=e^g$ is actually uniformly Hayman} with an argument similar to the one used to show in \cite[Proposition 5.1]{K_uno} that $f=e^g$ is in the Hayman class.

Observe first that
$$\sigma_f^2(t)=tg^\prime(t)+t^2 g^{\prime\prime}(t)\sim N^2 b_N t^N, \quad \mbox{as $t \to \infty$}\,.$$
Thus, the variance  condition \eqref{eq:uniformly Hayman variance} of being uniformly Hayman is satisfied.

\medskip

We have
$$\omega_g(t)=\frac{1}{6}\big(b_1 t +8 b_2 t^2+\frac{9}{2} t^3 g^{\prime\prime\prime}(t)\big)=O(t^N)\,, \quad \mbox{as $t \to \infty$}\,.$$

For cuts we propose $h(n,t)=h(t) n^{-\beta}=t^{-N\alpha} n^{-\beta}$, with $\alpha, \beta$ in the interval $(1/3, 1/2)$. For concreteness, we take $\alpha=\beta=5/12$.

From the proof of Theorem \ref{teor:exponentials uniformly Hayman} we have that
$$\begin{aligned}\Big|n\ln \E(e^{\imath \theta \breve{X}_t/\sqrt{n}})+\frac{\theta^2}{2}\Big|&\le \omega_g(t) t^{-5N/4} n^{-1/4}\\&=O\big(t^{-N/4} n^{-1/4}\big)\,,\quad \mbox{for $|\theta|\le h(n,t) \sigma_f(t) \sqrt{n}$}\,,\end{aligned}$$
and thus, we see that condition \eqref{eq:uniformly Hayman major} is satisfied.

\medskip

Now,  the proof that $f=e^g$ is in the Hayman class of \cite{K_uno} gives $\eta\in (0,\pi)$ and $t_0>0$, depending on $g$, so that
$$\sup\limits_{|\theta|>\omega} \big(\Re g(te^{\imath \theta})-g(t)\big)\le -C_g \min \big\{t, t^N \omega^2\big\}, \quad \mbox{for $\omega \ge \eta$ and $t>t_0$}\,,$$
for some constant $C_g$ depending on $g$.

Thus for some $t_1>t_0$,  so that $h(t)<\eta$, for $t>t_1$,  we have that
$$\begin{aligned}n \sup\limits_{h(n,t)\le |\theta|\le \pi} \big(\Re g(te^{\imath \theta})-g(t)\big)&\le -C \min\big\{t n, t^N t^{-5N/6} n^{1-5/6}\big\}\\&=-C \min\big\{tn, t^{N/6} n^{1/6}\big\}\,.\end{aligned}$$

With $\delta=\min\{1, N/6\}$, we have,  for $t\ge 1$, that
$$
\min\{tn, t^{N/6} n^{1/6}\}\ge t^\delta n^{1/6}\ge (1/2) (t^\delta+n^{1/6})\,,$$
and thus, since $|\E(e^{\imath \theta X_t})|=e^{\Re g(te^{\imath \theta})-g(t)}$, that
$$\begin{aligned}
	\sqrt{n} \sigma_f(t) \sup\limits_{h(n,t)\le |\theta|\le \pi}\big|\E(e^{\imath \theta X_t})^n\big|&=O\Big(\sqrt{n}t^{N/2}\exp\big(-C \min\{tn, t^{N/6}n^{1/6}\}\big)\Big)\\
	&=O\Big(\sqrt{n} \exp\big(-(C/2) n^{1/6}\big)\Big)\, O\Big(t^{N/2} \exp\big(-(C/2) t^\delta\big)\Big)
	\,,\end{aligned}$$
and, therefore, condition  \eqref{eq:uniformly Hayman minor} is satisfied.

\end{document}